\def\row#1/#2!{#1_{\IfStrEq{#2}{}{n}{#2}} & \dynkin{#1}{#2}\\}
\numberwithin{equation}{section}
\theoremstyle{plain}
\newtheorem{theorem}{Theorem}[section]
\newtheorem{prop}[theorem]{Proposition}
\newtheorem{cor}[theorem]{Corollary}
\newtheorem{lemma}[theorem]{Lemma}
\newtheorem{conj}[theorem]{Conjecture}
\newtheorem{mainprop}{Proposition}
\newtheorem{mainprob}{Problem}
\newtheorem{mainthm}{Theorem}
\newtheorem{conjecture}[theorem]{Conjecture}
\theoremstyle{definition}
\newtheorem{defn}[theorem]{Definition}
\newtheorem{example}[theorem]{Example}
\theoremstyle{remark}
\newtheorem{remark}[theorem]{Remark}
\newcommand{\circled}[1]{{\raisebox{.5pt}{\textcircled{\raisebox{-.9pt} {#1}}}}}
\newcommand{\CC}{{\mathbb C}}
\newcommand{\ZZ}{{\mathbb Z}}
\newcommand{\QQ}{{\mathbb Q}}
\newcommand{\XX}{{\mathbb X}}
\newcommand{\checkXX}{{\check{\mathbb X}}}
\newcommand{\Pcal}{{\mathcal P}}
\newcommand{\Qcal}{{\mathcal Q}}
\newcommand{\height}{\operatorname{ht}}
\newcommand{\RpwP}{\check R^{+}_{w,P}}
\newcommand{\RpwB}{\check R^{+}_{w,B}}
\newcommand{\Pic}{\operatorname{Pic}}
\newcommand{\checkalpha}{{\check \alpha}}
\newcommand{\checketa}{{\check \eta}}
\newcommand{\checkmu}{{\check \mu}}
\renewcommand{\div}{\operatorname{div}}
\newcommand{\inv}{^{-1}}
\begin{document}{\allowdisplaybreaks[4]}

\title[An anticanonical perspective on $G/P$ Schubert varieties]{An anticanonical perspective on $G/P$  Schubert varieties}


\author{Changzheng Li}
 \address{School of Mathematics, Sun Yat-sen University, Guangzhou 510275, P.R. China}
\email{lichangzh@mail.sysu.edu.cn}

\author{Konstanze Rietsch}
\address{ Department of Mathematics, King's College London, Strand, London WC2R 2LS, UK}
\email{konstanze.rietsch@kcl.ac.uk}

\author{Mingzhi Yang}
 \address{School of Mathematics, Sun Yat-sen University, Guangzhou 510275, P.R. China}
\email{yangmzh8@mail2.sysu.edu.cn}

\thanks{}

\date{
      }




\begin{abstract}
We describe a natural basis of the Cartier class group of an arbitrary Schubert variety $\XX_{w,P}$ in a flag variety $G/P$ of general Lie type. We  then characterise  when the Schubert variety is factorial/Fano, along with an explicit formula for the anticanonical line bundle in these cases.  We also  prove that, for Schubert varieties in simply-laced types (only), being  factorial is equivalent to being $\QQ$-factorial, and is equivalent to the equality of the Betti numbers $b_2(\XX_{w,P})=b_{2\ell(w)-2}(\XX_{w,P})$. Finally, we give a convenient characterisation of when a simply-laced Schubert variety is Gorenstein and when it is Gorenstein Fano.
\end{abstract}
\maketitle
\setcounter{tocdepth}{1} 
\tableofcontents
\section{Introduction}
\vskip .2cm

Let $G$ be a simply-connected simple complex algebraic group together with a choice of upper-triangular Borel subsgroup $B$ and parabolic $P $ with $B\subseteq P$. Our main object of study is the Schubert variety
\[
\XX_{w,P}:=\overline{BwP/P}
\]
associated to a Weyl group element $w$, where we choose $w$ to be a minimal coset representative,  $w\in W^P$, for the associated parabolic subgroup of the Weyl group $W$. The variety $\XX_{w,P}$ is an irreducible $\ell(w)$-dimensional subvariety of $G/P$. Moreover, it is well-known that $\XX_{w,P}$ is normal, Cohen-Macaulay and has rational singularities \cite{Ram,  Andersen, LaSe}.
More recently, Woo and Yong  characterised when type $A$ Schubert varieties $\XX_{w,B}$ satisfy local properties  that measure the singularities in \cite{WooYong08}. They gave a combinatorial characterisation of when a type $A$ Schubert variety is Gorenstein or Fano in \cite{WooYong}, and posed a problem there and again  in \cite[Problem 8.24]{WooYong23}. Namely
\begin{mainprob}\label{problem}
Determine which Schubert varieties $\XX_{w,P}$ in $G/P$ are ($\mathbb{Q}$-)Gorenstein/($\mathbb{Q}$-)factorial.
\end{mainprob}
\noindent The Schubert variety $\XX_{w,P}$ comes with a natural anticanonical divisor. In this paper we describe when this divisor is Cartier, and in that case express its associated line bundle in the Picard group in a natural way. Along the way we provide an answer to Problem \ref{problem}. The answer is particularly nice in the simply-laced case, where we show that  $\QQ$-Gorenstein is equivalent to Gorenstein and $\QQ$-factorial to factorial. As a result we prove a  conjecture of Enomoto  \cite[Conjecture A.8]{Emoto} in simply-laced types, but we disprove the conjecture in general.

\subsection{ } We have two natural sets of divisors in $\XX_{w,P}$. On the one hand, we have the Schubert divisors $\XX_{x,P}\subset \XX_{w,P}$, which are given by elements $x\prec w$, namely $x\in W^P$ with $x< w$ and $\ell(x)=\ell(w)-1$. We recall that the Schubert divisors determine a basis of $H_{2\ell(w)-2}(\XX_{w,P},\ZZ)$. Or equivalently, they form a basis of the (Weil) divisor class group $A_{\ell(w)-1}(\XX_{w,P})$. The Chow groups of a Schubert variety agree with the integral homology groups \cite{FMSS}.

On the other hand we have the projected Richardson varieties. Namely, in $G/B$ we have the `Richardson varieties' of $\XX_{w,B}$ associated to elements $v\le w$ of $W$ by
\[
\XX^{v}_{w,B}=\overline{B_-v B\cap BwB/B}.
\]
These are reduced  irreducible subvarieties of dimension $\ell(w)-\ell(v)$ by \cite{KaLu}. If $v\not< w$ the intersection above is empty. Amongst these we have the codimension one Richardson varieties corresponding to simple reflections $s_i$ appearing in $w$. The $G/P$ analogue is the `projected Richardson variety', and we obtain divisors in $\XX_{w,P}$ of the form
\[
\XX^{s_i}_{w,P}:=\pi_P(\XX^{s_i}_{w,B}),
\]
where $\pi_P:G/B\to G/P$ is the natural projection. Let us write $I^B_w:=\{i\in I\mid s_i<w\}$ for the set indexing the Richardson divisors in $\XX_{w,B}$. This set simultaneously indexes also the projected Richardson varieties in $\XX_{w,P}$. We have a natural subset $I^P_w:=\{k\in I^B_w\mid s_k\in W^P\}$.
The role of the divisors $\XX^{s_k}_{w,P}$ is elucidated by the following result, which forms the starting point for our paper.
\begin{mainprop}\label{mt:PicardGroup}
The projected Richardson divisor $\XX^{s_k}_{w,P}$ in $\XX_{w,P}$ associated to $k\in I^P_w$ is Cartier, and the Picard group $\operatorname{Pic}(\XX_{w,P})$  is freely generated by the line bundles $\mathcal O(\XX^{s_k}_{w,P})$ with $k\in I^P_w$.
\end{mainprop}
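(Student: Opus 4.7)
The plan is to identify each line bundle $\mathcal{O}(\XX^{s_k}_{w,P})$ for $k\in I^P_w$ with the restriction to $\XX_{w,P}$ of the fundamental line bundle $\mathcal{L}_{\omega_k}$ on $G/P$, and then to invoke the known description of $\Pic(\XX_{w,P})$. Recall that $\Pic(G/P)$ is freely generated by the $\mathcal{L}_{\omega_k}$ with $k\notin I_P$, and (by a classical result going back to Mathieu; see Kumar's book or Brion--Kumar) that the restriction map $\Pic(G/P)\to\Pic(\XX_{w,P})$ is surjective with kernel generated by those $\mathcal{L}_{\omega_k}$ for which $s_k\not\le w$. This gives a free $\ZZ$-basis $\{\mathcal{L}_{\omega_k}|_{\XX_{w,P}}\mid k\in I^P_w\}$ of $\Pic(\XX_{w,P})$, and reduces the proposition to the identification $\mathcal{L}_{\omega_k}|_{\XX_{w,P}}\cong\mathcal{O}(\XX^{s_k}_{w,P})$.

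For this identification, I would use the canonical $B_-$-semi-invariant section $\sigma_k$ of $\mathcal{L}_{\omega_k}$ on $G/P$ (unique up to scalar, of $B_-$-weight $\omega_k$), whose zero divisor is the opposite codimension-one Schubert variety $\XX^{s_k}_{G/P}=\overline{B_-s_kP/P}$. For $k\in I^P_w$ the restriction $\sigma_k|_{\XX_{w,P}}$ is nonzero (since $s_k\le w$ implies $\XX_{w,P}\not\subset\XX^{s_k}_{G/P}$), and its zero scheme is supported on $\XX^{s_k}_{G/P}\cap\XX_{w,P}=\XX^{s_k}_{w,P}$. To check scheme-theoretic equality, i.e.\ that the multiplicity is one, I would pull back along the morphism $\pi_P|_{\XX_{w,B}}\colon\XX_{w,B}\to\XX_{w,P}$, which is birational because $w\in W^P$ forces $\dim\XX_{w,B}=\ell(w)=\dim\XX_{w,P}$. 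Upstairs, $\pi_P^*\sigma_k$ is the analogous canonical section on $\XX_{w,B}$, whose zero divisor is the reduced Richardson divisor $\XX^{s_k}_{w,B}$ (the $P=B$ case being classical, e.g.\ from the standard monomial theory of Lakshmibai--Seshadri). Since $s_k\in W^P$, the restriction $\pi_P\colon\XX^{s_k}_{w,B}\to\XX^{s_k}_{w,P}$ is likewise birational; combined with the injectivity of $\pi_P^*\colon\Pic(\XX_{w,P})\hookrightarrow\Pic(\XX_{w,B})$ (which follows from normality of $\XX_{w,P}$), this forces $\mathcal{O}(\XX^{s_k}_{w,P})\cong\mathcal{L}_{\omega_k}|_{\XX_{w,P}}$.

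The main obstacle I anticipate lies in the two birationality claims, namely that both $\pi_P|_{\XX_{w,B}}$ and $\pi_P|_{\XX^{s_k}_{w,B}}$ are birational onto their images when $w,s_k\in W^P$. Both reduce to dimension counts together with the existence of a dense $B$-orbit on which $\pi_P$ is an isomorphism, and they are standard within the theory of projected Richardson varieties (cf.\ Knutson--Lam--Speyer), but they must be invoked carefully to ensure the multiplicity one conclusion above. Granting these, the combination of the classical Picard-group description with the sectional identification immediately yields both that each $\XX^{s_k}_{w,P}$ is Cartier and that the line bundles $\mathcal{O}(\XX^{s_k}_{w,P})$ freely generate $\Pic(\XX_{w,P})$.
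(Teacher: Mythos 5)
Your proposal is correct in outline but takes a genuinely different route from the paper. The paper works purely at the level of cycle classes: it computes $\iota_*\bigl(\div(\mathcal L_{\omega_k}|_{\XX_{w,P}})\bigr)$ via the Chevalley formula in $A^*(G/P)$ and separately computes $[\XX^{s_k}_{w,B}]$ via the Chevalley formula in $A^*(G/B)$, then pushes forward by $(\pi_P)_*$ and matches the two expressions, concluding that $D_k=\XX^{s_k}_{w,P}$ is Cartier with $\mathcal O(D_k)\cong\mathcal L_{\omega_k}|_{\XX_{w,P}}$. Freeness is then proved directly by restricting to the one-dimensional Schubert curves $\XX_{s_k,P}$ and computing degrees $\langle\lambda,\checkalpha_k\rangle$. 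By contrast, you argue with the distinguished $B_-$-semi-invariant section $\sigma_k$, identify its zero scheme on $\XX_{w,P}$ set-theoretically, and then get the multiplicity-one statement by pulling back to $\XX_{w,B}$ along the birational projection and invoking the $P=B$ case together with Zariski's main theorem. Both are legitimate; the paper's approach has the advantage of producing along the way the explicit Weil-divisor expansion \eqref{e:Dihom}, which is used repeatedly in the rest of the paper, whereas your section-theoretic route is cleaner conceptually but leans more heavily on the literature about projected Richardson varieties (Knutson--Lam--Speyer).

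Two small cautions. First, you assert as classical that the kernel of $\operatorname{Res}\colon\Pic(G/P)\to\Pic(\XX_{w,P})$ is exactly generated by the $\mathcal L_{\omega_k}$ with $s_k\not\le w$; Mathieu's result gives surjectivity, but the precise kernel statement for $G/P$ Schubert varieties is less standard and, if not directly quotable, is most easily proved exactly as in the paper (degree on Schubert curves) or via injectivity of $\pi_P^*$ together with the known freeness of $\Pic(\XX_{w,B})$. Second, in the final step the phrase ``injectivity of $\pi_P^*$ \dots forces $\mathcal O(\XX^{s_k}_{w,P})\cong\mathcal L_{\omega_k}|_{\XX_{w,P}}$'' as written presupposes $\XX^{s_k}_{w,P}$ is Cartier, which is what you are proving; the correct formulation is the one hidden in your multiplicity argument: $\div(\sigma_k|_{\XX_{w,P}})$ is already Cartier, its order along $\XX^{s_k}_{w,P}$ can be computed at the generic point where $\pi_P$ is an isomorphism (its non-isomorphism locus has codimension $\ge 2$ downstairs since $\XX_{w,P}$ is normal), and there it agrees with the order of $\XX^{s_k}_{w,B}$ upstairs, which is $1$. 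With those adjustments your argument closes.
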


By \cite{Mat88} the line bundles on $\XX_{w,P}$ all arise as restrictions of line bundles on $G/P$. From this perspective the line bundle $\mathcal O(\XX^{s_k}_{w,P})$ from the proposition is just isomorphic to the restriction of the line bundle $\mathcal L_{\omega_k}$ associated to the fundamental weight $\omega_k$ via the Borel-Weil construction. See also \cite{WooYong,RW:Schub} for such statements in the $G/B$ and the Grassmannian Schubert variety case, respectively.

Our perspective is now that the first set of divisors, the set of Schubert divisors $\XX_{x,P}$, gives a basis of the Weil divisor class group, while the second set of divisors, $\XX^{s_k}_{w,P}$, gives a basis of the Cartier divisor class group. Moreover, the former group is naturally isomorphic to  $H_{2\ell(w)-2}(\XX_{w,P},\ZZ)$ and the latter group can be identified with $H^2(\XX_{w,P},\ZZ)$.
Our approach will be to  use the relationship between these two sets of divisors to analyse properties of $\XX_{w,P}$ such as the property of factoriality or of being Gorenstein.

\subsection{}
Our first application involving \cref{mt:PicardGroup}  is about characterising factorial  Schubert varieties. Let $b_i$ denote the $i^{\rm th}$ Betti number, so  $b_i(\XX_{w,P})=\dim H_{i}(\XX_{w,P},\ZZ)$.
We recall the work of Carrell, Kuttler and Peterson \cite{CP,CK} which proves that in simply-laced type $\XX_{w,B}$ is smooth if and only if its Poincar\'e polynomial is palindromic, namely $b_i(\XX_{w,P})=b_{2\ell(w)-i}(\XX_{w,P})$ for all $i$. Below we provide a similar characterization for simply-laced Schubert varieties with smoothness replaced by factoriality.
\begin{mainthm}\label{mt:simplylaced-factorial}
In  simply-laced types, a Schubert variety $\XX_{w,P}$ is factorial if and only if $b_2(\XX_{w,P})=b_{2\ell(w)-2}(\XX_{w,P})$. Moreover, any $\QQ$-Cartier divisor in $\XX_{w,P}$ is automatically Cartier.
\end{mainthm}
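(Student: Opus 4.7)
The plan is to use \cref{mt:PicardGroup} to realise the natural injection
\[
\iota\colon \Pic(\XX_{w,P}) \hookrightarrow A_{\ell(w)-1}(\XX_{w,P}),
\]
injective because $\XX_{w,P}$ is normal, as an explicit non-negative integer matrix. By \cref{mt:PicardGroup} the source has $\ZZ$-basis $\{[\XX^{s_k}_{w,P}]\}_{k\in I^P_w}$ of rank $b_2(\XX_{w,P})$, while the target has Schubert basis $\{[\XX_{x,P}]\}_{x\prec w}$ of rank $b_{2\ell(w)-2}(\XX_{w,P})$. Expanding
\[
[\XX^{s_k}_{w,P}] \;=\; \sum_{x\prec w} a_{k,x}\,[\XX_{x,P}]
\]
presents $\iota$ as a matrix $M=(a_{k,x})$ whose entries are Chevalley-type pairings of the form $\langle\omega_k,\beta_x^\vee\rangle$, where $\beta_x$ is the positive root associated with the Bruhat cover $x\prec w$.

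From this setup the easy directions are immediate. The variety $\XX_{w,P}$ is factorial iff $\iota$ is a lattice isomorphism, i.e.\ $M$ is square and unimodular; it is $\QQ$-factorial iff $\iota\otimes\QQ$ is an isomorphism, i.e.\ $M$ is square with nonzero determinant. Either condition forces $b_2(\XX_{w,P})=b_{2\ell(w)-2}(\XX_{w,P})$. Conversely, a $\QQ$-Cartier divisor on $\XX_{w,P}$ is automatically Cartier iff the image of $\iota$ is a saturated sublattice of $A_{\ell(w)-1}(\XX_{w,P})$, and this is equivalent to $M$ admitting a $|I^P_w|\times|I^P_w|$ submatrix of determinant $\pm 1$.

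The crux is the simply-laced assertion, which I would prove first: $M$ always possesses a unimodular $|I^P_w|$-submatrix, so that the image of $\iota$ is saturated. The simply-laced hypothesis enters via the identity
$\langle\omega_k,\beta^\vee\rangle = (\text{coefficient of }\alpha_k\text{ in }\beta)$,
which holds because all roots share a common length and hence $\beta^\vee=\sum_j c_j\alpha_j^\vee$ whenever $\beta=\sum_j c_j\alpha_j$. My strategy would be to construct a canonical injection $\sigma\colon I^P_w\hookrightarrow\{x\prec w\}$, together with an ordering of $I^P_w$, under which $(a_{k,\sigma(j)})_{k,j}$ is upper-triangular with $1$'s on the diagonal. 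For $k$ a left descent of $w$ one can take $\sigma(k)$ to be the Schubert cover obtained by ``removing'' $s_k$ (so $\beta_{\sigma(k)}=\alpha_k$); for the remaining $k\in I^P_w$ one must pick a positive root $\beta_{\sigma(k)}$ of larger height in which $\alpha_k$ appears with multiplicity exactly one and such that $s_{\beta_{\sigma(k)}}w$ is a Bruhat cover, exploiting the combinatorics of reduced words. Producing $\sigma$ globally and verifying the triangularity is the principal combinatorial obstacle, and is in the same spirit as (and presumably parallels) the Carrell--Kuttler--Peterson analysis \cite{CP,CK} of simply-laced root data that underlies the smoothness/rational smoothness equivalence.

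Once the unimodular submatrix claim is established, the image of $\iota$ is saturated, which immediately yields the moreover assertion. Combined with the general directions this collapses factoriality, $\QQ$-factoriality, and the identity $b_2(\XX_{w,P})=b_{2\ell(w)-2}(\XX_{w,P})$ into a single equivalent condition, completing the proof.
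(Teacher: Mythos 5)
Your framework is precisely the one the paper uses: realise $\Pic(\XX_{w,P})\hookrightarrow A_{\ell(w)-1}(\XX_{w,P})$ as an explicit Chevalley-type matrix $M=\bigl(\langle\omega_k,\checketa\rangle\bigr)$, observe the easy directions, and reduce the simply-laced case to producing a unipotent triangular $|I^P_w|\times|I^P_w|$ submatrix of $M$. You have also correctly located why simply-lacedness matters (coroot equals root in simply-laced type, so $\langle\omega_k,\checketa\rangle$ is a coefficient extraction). However, the heart of the matter --- \emph{producing} the injection $\sigma$ (equivalently, the basis the paper calls $\mathcal B_{w,P}$) --- is left unresolved in your sketch, and that is precisely where all the work lies. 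A minor slip: for $\beta_{\sigma(k)}=\alpha_k$ to correspond to a Bruhat cover you need $s_k$ to be a \emph{right} descent of $w$ (so that $ws_k\prec w$ and the associated coroot is $\checkalpha_k$), not a left descent.

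More substantively, your sketch misses the key technical difficulty. The natural candidate coming from a ``rightmost'' occurrence of $s_k$ in a reduced word --- namely the coroot $\checkalpha_w(k)=s_{i_r}\cdots s_{i_{\ell+1}}(\checkalpha_k)$ --- is an element of $\check R^+(w)$ in which $\checkalpha_k$ has coefficient $1$, but in general it is \emph{not} a cover inversion (i.e.\ $ws_{\checkalpha_w(k)}$ may fail to have length $\ell(w)-1$). The paper handles this by first characterising $\RpwB$ as the set of \emph{indecomposable} elements of $\check R^+(w)$ (\cref{p:indecomposableV2}), then proving (\cref{l:decomposablesl}) that in simply-laced type decomposition is unique in form $\checketa=\checkmu+\checkmu'$ with coefficient patterns forcing exactly one summand to pair to $1$ with $\omega_k$; this unique summand is taken to be $\checketa_w(k)\in\RpwB$. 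Without this decomposability analysis the claimed triangularity cannot be verified. Your sketch instead gestures at a CKP-style argument, but the actual mechanism in the paper is this root-combinatorial one, not the $T$-fixed-point/Peterson analysis underlying rational smoothness.

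Finally, the general-$P$ case is not a direct corollary of the $G/B$ case: $\mathcal B_{w,B}$ does not simply restrict to a subset of $\RpwP$, because the elements $\checketa_w(k)$ for $k\in I^P_w$ need not satisfy $ws_{\checketa_w(k)}\in W^P$. The paper has to construct $\mathcal B_{w,P}$ by a recursive ``$P$-adaptation'' (\cref{l:M}, \cref{d:BwP}), repeatedly adding simple coroots $\checkalpha_j$ with $j\in I_P$ until the membership in $W^P$ is achieved, while preserving indecomposability and the triangularity modulo $\langle\checkalpha_j\mid j\in I_P\rangle$. This step is omitted entirely from your sketch, so the parabolic case remains a genuine gap.
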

  \begin{remark}\label{r:Qfactorial}
For arbitrary Schubert varieties $\XX_{w,P}$, the condition $b_2(\XX_{w,P})=b_{2\ell(w)-2}(\XX_{w,P})$ is equivalent to being $\QQ$-factorial. This follows from the fact that Schubert varieties are normal varieties with rational singularities
using a general result of Park and Popa~\cite[Theorem A]{ParkPopa}. From this perspective, our result says that $\QQ$-factoriality is equivalent to factoriality for simply-laced Schubert varieties. The second statement of \cref{mt:simplylaced-factorial} also implies that being $\QQ$-Gorenstein is equivalent to being Gorenstein for simply-laced Schubert varieties.
\end{remark}
\begin{remark}
Our results have further parallels to the results of Carrell, Kuttler and Peterson. Namely, in \cite{CP} it is proved that $\XX_{w,B}$ having a palindromic Poincar\'e polynomial is equivalent to it being rationally smooth, which can also be interpreted in terms of the Kazhdan-Lusztig polynomial \cite{KaLu} as saying $P_{e,w}=1$. Then  \cite{CK} shows that in simply-laced type being rationally smooth is equivalent to being smooth. Here in this work we consider $\QQ$-factorialty of $\XX_{w,P}$, which is equivalent $b_2=b_{2\ell(w)-2}$ by \cite{ParkPopa}, which can further be restated as vanishing of the degree $1$ term of $P_{e,w}$ by \cite[Theorem D]{BE09}. And we prove that for $\XX_{w,P}$ of simply-laced type, being $\QQ$-factorial is equivalent to being factorial.
\end{remark}

\begin{remark}\label{r:historyFactorial} For the complete flag variety $G/B$ of type $A$, the characterisation of factoriality given in \cref{mt:simplylaced-factorial} also follows from an earlier combinatorial characterisation for  Schubert varieties to be factorial that is found in work of \cite{BMB07,Emoto}.
Thus in type $A$ it was already known that factoriality is equivalent to $b_2(\XX_{w,P})=b_{2\ell(w)-2}(\XX_{w,P})$. Our \cref{mt:simplylaced-factorial} extends this result to all simply-laced types with a new and uniform proof. This verifies for simply-laced Schubert varieties a conjecture made by Enomoto in  \cite[Conjecture~A.8]{Emoto}. However, we  give counterexamples to the conjecture of Enomoto in the non-simply-laced case.
\end{remark}

Underlying \cref{mt:simplylaced-factorial} is another result that characterises factoriality for general Schubert varieties. In \cref{d:R+wP} we associate a set of coroots $\RpwP$ to $\XX_{w,P}$, consisting of positive coroots   $\checketa$ such that $ws_{\checketa}\in W^P$ has length $\ell(w)-1$.  We characterise this set $\RpwP$ entirely in terms of root system combinatorics. For example, $\RpwB$ is the set of positive coroots sent to negative that are \textit{indecomposable}, see  \cref{d:indecomposableV2} and  \cref{p:indecomposableV2}. For the additional condition to lie in the parabolic subset $\RpwP$, see \cref{p:RwP}.  The general criterion for factoriality in these terms is as follows.

\begin{mainthm} \label{mt:factorialgen}
A Schubert variety $\XX_{w,P}$ of arbitrary Lie type is factorial if and only if the integer matrix $M=\big(\langle\omega_k, \checketa\rangle\big)_{\checketa\in\RpwP,\,k\in I_w^P}$ is square and has $\det(M)=\pm 1$.
\end{mainthm}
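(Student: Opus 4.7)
The plan is to identify both $\Pic(\XX_{w,P})$ and the Weil divisor class group $A_{\ell(w)-1}(\XX_{w,P})$ as free abelian groups with explicit bases, and then to compute the natural inclusion $c\colon \Pic(\XX_{w,P})\hookrightarrow A_{\ell(w)-1}(\XX_{w,P})$ (which exists since $\XX_{w,P}$ is normal) in these bases. By \cref{mt:PicardGroup}, $\Pic(\XX_{w,P})$ is freely generated by $\{[\mathcal O(\XX^{s_k}_{w,P})]\}_{k\in I^P_w}$, and each $\mathcal O(\XX^{s_k}_{w,P})$ is the restriction $\mathcal L_{\omega_k}|_{\XX_{w,P}}$ of the Borel--Weil line bundle. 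On the Weil side, $A_{\ell(w)-1}(\XX_{w,P})$ is free on the Schubert divisor classes $[\XX_{ws_{\check\eta},P}]$ indexed by $\check\eta\in\RpwP$. Since $c$ is injective, factoriality of $\XX_{w,P}$ is equivalent to $c$ being an isomorphism.

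The core step is a Chevalley-type restriction formula, namely
\[
c\bigl([\mathcal L_{\omega_k}|_{\XX_{w,P}}]\bigr)=\sum_{\check\eta\in\RpwP}\langle\omega_k,\check\eta\rangle\,[\XX_{ws_{\check\eta},P}]
\]
in $A_{\ell(w)-1}(\XX_{w,P})$, which shows the matrix of $c$ is exactly $M$. To prove this, I would pick a canonical rational section of $\mathcal L_{\omega_k}$ on $G/P$ coming from an appropriate weight vector in the dual Weyl module, restrict it to $\XX_{w,P}$, and compute the multiplicities with which the Schubert divisors appear in the resulting Weil divisor. An equivalent route is to use the surjection $\Pic(G/P)\twoheadrightarrow\Pic(\XX_{w,P})$ of \cite{Mat88} together with the standard Chevalley formula on $G/P$, and then pull back through the closed embedding $\XX_{w,P}\hookrightarrow G/P$, keeping track of which positive coroots $\check\eta$ with $\ell(ws_{\check\eta})=\ell(w)-1$ produce an actual Schubert divisor of $\XX_{w,P}$ (namely those with $ws_{\check\eta}\in W^P$, i.e.\ $\check\eta\in\RpwP$) versus those whose contribution must be absorbed elsewhere.

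Granting the restriction formula, the theorem is a standard fact about injective homomorphisms between finitely generated free abelian groups: such a map is an isomorphism if and only if the source and target have the same rank and the matrix in any pair of bases has determinant $\pm 1$. Here this amounts to $|\RpwP|=|I^P_w|$ (so that $M$ is square) together with $\det M=\pm 1$, giving the claimed equivalence.

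The principal obstacle is the Chevalley restriction formula itself. In general $\RpwB$ is strictly larger than $\RpwP$, so the naive pullback of the $G/B$ Chevalley formula produces additional contributions from coroots $\check\eta\in\RpwB\setminus\RpwP$; one must verify that the corresponding Schubert divisors of $\XX_{w,B}$ collapse under $\pi_P$ in a compatible way so that only the $\RpwP$-terms survive, and with multiplicity one. Controlling this collapse along codimension-one strata via the fibre structure of $\pi_P\colon\XX_{w,B}\to\XX_{w,P}$ is the technical heart of the argument, and once settled the linear-algebra conclusion is immediate.
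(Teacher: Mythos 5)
Your proposal follows the same route as the paper: invoke \cref{mt:PicardGroup} for the explicit basis of $\Pic(\XX_{w,P})$, establish the restriction formula $\div(\mathcal L_{\omega_k}|_{\XX_{w,P}})=\sum_{\checketa\in\RpwP}\langle\omega_k,\checketa\rangle[\XX_{ws_\checketa,P}]$ (the paper's equation \eqref{e:Dihom}), and reduce factoriality to unimodularity of $M$ by comparing $\Pic$ inside the Weil class group. The one place you over-complicate is the ``principal obstacle'': you worry about extra contributions from $\RpwB\setminus\RpwP$ when pulling back the $G/B$ Chevalley formula, but your own alternate route---applying the Chevalley formula directly in $G/P$ via $\iota_*(\div(\mathcal L_{\omega_k}|_{\XX_{w,P}}))=[\XX_{w,P}]\cdot[\XX^{s_k,P}]$---already yields a sum indexed only by $\RpwP$, with no collapse to control; this is exactly what the paper does, and the $\pi_P$-pushforward of the $G/B$ formula is invoked there only for the separate purpose of identifying the concrete divisor $\XX^{s_k}_{w,P}$ as the Cartier divisor of the restricted bundle.
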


\noindent Note that the row indexing set $\RpwP$ of the matrix $M$ from the theorem is in bijection with the set of Schubert divisors in $\XX_{w,P}$, and the column indexing set with the Cartier divisors $\XX^{s_k}_{w,P}$, so that the matrix $M$ has dimensions $b_{2\ell(w)-2}\times b_2$. Therefore $\QQ$-factoriality is equivalent to the matrix being square, see \cref{r:Qfactorial}.
From this perspective, \cref{mt:simplylaced-factorial} says that in simply-laced types, as soon as the matrix is square it will automatically be invertible over $\ZZ$.

One ingredient in the proof of these theorems is showing that the matrix $M$ from \cref{mt:factorialgen} always has maximal rank, for any $P$ and $w\in W^P$. This is another application of \cref{mt:PicardGroup}, and we note that as a purely  combinatorial statement about root systems it is rather non-trivial. We also note that using \cref{mt:factorialgen}, it becomes  straightforward to determine whether a given Schubert variety is factorial via root system combinatorics.

\subsection{}
The next main object of interest is the anticanonical class, and determining whether individual Schubert varieties are Fano. For this we consider the total sum of \textit{all} the projected Richardson divisors $\XX^{s_i}_{w,P}$ and all the Schubert divisors $\XX_{x,P}$ in a given Schubert variety. This sum is an anticanonical divisor in $\XX_{w,P}$, which was implicit in  \cite{ramanathan, KLS} and we prove in \cref{p:BasicKX} following     \cite[Proposition 2.2 and Lemma 3.8]{LSZ}.  We write
\begin{eqnarray*}
-K_{\XX_{w,P}}&=&\sum_{i\in I^B_w} \XX^{s_i}_{w,P}\ +\ \sum_{x\prec w}  \XX_{ x,P}.
\end{eqnarray*}

Let us now consider the setting of \textit{factorial} Schubert varieties $\XX_{w,P}$ and characterise which of these have ample anticanonical divisor. Associate to $\XX_{w,P}$ the (in this case, square) matrix $M_{w,P}=(\langle\omega_k,\checketa\rangle)_{\checketa,k}$  Let $b=b_2(\XX_{w,P})=|I^P_w|$. By ordering rows and columns appropriately we may assume that $M_{w,P}\in SL_b(\ZZ)$, see \cref{mt:factorialgen}.  We have the following result.

\begin{mainthm}\label{mt:factorialFano} Suppose $\XX_{w,P}$ is a factorial Schubert variety of arbitrary Lie type. Let $N_{w,P}$ be the inverse of the  matrix $M_{w,P}$ associated to $\XX_{w,P}$ above and let $\mathbf 1$ denote the constant vector $(1)_{\checketa\in \RpwP}$ and  $\mathbf h=(h_\checketa)_{\checketa\in \RpwP}$ the vector of heights $h_\checketa=\height(\checketa)$. Let $N_{w,P}(\mathbf h+\mathbf 1)=(\hat n_k)_{k\in I^P_w}$.  Then the anticanonical class of $\XX_{w,P}$ is given by
\[
[-K_{\XX_{w,P}}]=\sum_{k\in I^P_w} \hat n_k [\XX^{s_k}_{w,P}],
\]
in terms of the projected Richardson divisor classes from \cref{mt:PicardGroup}. Moreover, the factorial Schubert variety $\XX_{w,P}$ is Fano if and only if the vector $N_{w,P}(\mathbf h+\mathbf 1)$ is positive, that is, all $\hat n_k>0$.
\end{mainthm}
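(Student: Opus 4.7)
The plan is to expand both summands of $-K_{\XX_{w,P}}$ into the Picard basis $\{[\XX^{s_k}_{w,P}]\}_{k\in I^P_w}$ using $M_{w,P}$ as the change-of-basis tool, and then read off the Fano criterion from positivity on Schubert curves.

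\textbf{Change of basis.} I would first record the Chevalley-type identity
\[
[\XX^{s_k}_{w,P}] = \sum_{\checketa\in\RpwP}\langle\omega_k,\checketa\rangle\,[\XX_{x(\checketa),P}] \qquad (k\in I^P_w)
\]
in $A_{\ell(w)-1}(\XX_{w,P})$, where $x(\checketa)\prec w$ is the Schubert divisor corresponding to $\checketa$; this identifies $M_{w,P}$ with the transition matrix from the Picard basis to the Schubert--Weil basis. Under the factoriality hypothesis, \cref{mt:factorialgen} gives $\det M_{w,P}=\pm 1$, so $N_{w,P}=M_{w,P}^{-1}$ has integer entries and yields the inverse expansion $[\XX_{x(\checketa),P}] = \sum_k (N_{w,P})_{k,\checketa}\,[\XX^{s_k}_{w,P}]$.

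\textbf{Anticanonical in the Picard basis.} The Schubert-divisor part of $-K_{\XX_{w,P}}$ contributes $\sum_k(N_{w,P}\mathbf{1})_k[\XX^{s_k}_{w,P}]$ directly. For the projected Richardson sum $\sum_{i\in I^B_w}[\XX^{s_i}_{w,P}]$, the Chevalley expansion above extends to all $i\in I^B_w$; for $i\in I^B_w\setminus I^P_w$ (where $\omega_i$ does not descend to $G/P$) I would argue via push-forward along the birational map $\pi_P|_{\XX_{w,B}}\colon\XX_{w,B}\to\XX_{w,P}$, applying the $G/B$-Chevalley formula upstairs and noting that only Schubert divisors indexed by $\RpwP\subseteq\RpwB$ survive $\pi_{P,*}$. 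Then, using that every $\checketa\in\RpwP$ is an inversion coroot of $w$ and so is supported on $\{\check\alpha_i:i\in I^B_w\}$, one has
\[
\sum_{i\in I^B_w}\langle\omega_i,\checketa\rangle = \sum_{i\in I}\langle\omega_i,\checketa\rangle = \langle\rho,\checketa\rangle = h_\checketa,
\]
using $\rho=\sum_i\omega_i$ and the definition of coroot height. Combining the two contributions via $N_{w,P}$ gives $[-K_{\XX_{w,P}}]=\sum_k\hat n_k[\XX^{s_k}_{w,P}]$ with $\hat n=N_{w,P}(\mathbf{h}+\mathbf{1})$, as claimed.

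\textbf{Fano criterion.} Finally, a line bundle $\sum_k a_k[\XX^{s_k}_{w,P}]$ on $\XX_{w,P}$ is ample if and only if $a_k>0$ for every $k\in I^P_w$: the pairing $[\XX^{s_j}_{w,P}]\cdot[\XX_{s_k,P}]=\langle\omega_j,\check\alpha_k\rangle=\delta_{jk}$ gives the degree as $a_k$ on the Schubert curve $\XX_{s_k,P}$, and the Schubert curves $[\XX_{s_k,P}]$ for $k\in I^P_w$ generate the Mori cone of $\XX_{w,P}$ (a standard consequence of the Bruhat cell decomposition). Applied to $-K_{\XX_{w,P}}$, the Fano condition becomes $\hat n_k>0$ for every $k\in I^P_w$. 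I expect the main technical obstacle to be the extension of the Chevalley-type expansion to indices $i\in I^B_w\setminus I^P_w$, which requires the $\pi_P$-push-forward argument rather than a direct line-bundle calculation on $G/P$; the remainder of the proof is then essentially bookkeeping with $M_{w,P}$ and the identity $\rho=\sum_i\omega_i$.
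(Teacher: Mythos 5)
Your proposal is correct and follows essentially the same route as the paper's proof, which is carried out in Propositions~\ref{p:anticanon}, \ref{p:anticanon2} and Corollary~\ref{c:Fano}: start from the anticanonical divisor of \cref{p:BasicKX}, rewrite each projected Richardson divisor $[\XX^{s_j}_{w,P}]$ via the $G/B$ Chevalley formula and pushforward along $\pi_P$ (with the extra terms in $\RpwB\setminus\RpwP$ killed for dimension reasons), collapse $\sum_{i\in I^B_w}\langle\omega_i,\checketa\rangle = \langle\rho,\checketa\rangle = \height(\checketa)$, and then solve the resulting linear system with $N_{w,P}=M_{w,P}^{-1}$. The one place where you diverge slightly is the ``if'' direction of the Fano criterion: you appeal to the fact that the Schubert curve classes $[\XX_{s_k,P}]$ generate the Mori cone of $\XX_{w,P}$ and apply Kleiman's criterion, whereas the paper avoids Kleiman by noting that $\hat n_k>0$ for all $k$ makes $\mathcal L_\lambda$ ample on $G/P$ (after replacing $G$ by a Levi so that $I^P_w=I^P$), hence ample on $\XX_{w,P}$ by restriction; the ``only if'' direction in both arguments is the degree computation $\deg(\mathcal L_\lambda|_{\XX_{s_k,P}})=\hat n_k$. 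Both are valid, and the Mori-cone claim you use is indeed standard for Schubert varieties (it follows from the Bruhat cell filtration, as in \cite{FMSS}), but the paper's route requires one less piece of machinery.
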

In terms of the Borel-Weil description of line bundles,   we have that the anticanonical bundle of $\XX_{w,P}$ is the restriction of the line bundle $\mathcal L_{\lambda}$ on $G/P$ where $\lambda=\sum_{k\in I^P_w} \hat n_k\omega_k$.

\subsection{} Let us now assume $\XX_{w,P}$ is a Schubert variety of simply-laced type (with no factoriality assumption). Recall that elements of $H_2(\XX_{w,P},\ZZ)$ can be represented by coroots. If $\cap[\XX_{w,P}]:H^2(\XX_{w,P},\ZZ)\to H_{2\ell(w)-2}(\XX_{w,P},\ZZ)$ is the cap-product map, which here can also be interpreted as the map sending the Chern class of a line bundle to its divisor class, we consider the dual map
\[
(\cap[\XX_{w,P}])^*:H^{2\ell(w)-2}(\XX_{w,P},\ZZ)\longrightarrow H_2(\XX_{w,P},\ZZ).
\]
In simply-laced types we prove that one can choose a  subset of Schubert cohomology classes whose image under $(\cap[\XX_{w,P}])^*$ forms a (nonstandard) basis of $H_2(\XX_{w,P},\ZZ)$.  This basis is encoded in a subset $\mathcal B_{w,P}$ of coroots inside $\RpwP$ indexed by $I^P_w$ which we construct non-canonically.

In the case of a $G/B$ Schubert variety (still simply-laced), the element $\checkmu_w(i)\in \mathcal B_{w,B}$ associated to $i\in I^B_w$ is chosen from amongst positive coroots $\checkmu$ satisfying $w(\checkmu)<0$ and $\langle\omega_i,\checkmu\rangle> 0$, as indecomposable summand of the \textit{minimal} element in terms of a suitable `reflection ordering', see \cref{s:BwB}. The basis $\mathcal B_{w,P}$ for a general parabolic is constructed starting from the subset of $\mathcal B_{w,B}$ indexed by $I^P_w$ via a sequence of lemmas that culminate in \cref{d:BwP}. We do not repeat the construction in the introduction. However, the set $\mathcal B_{w,P}$ has the remarkable property of agreeing with the subset $\{\checkmu_w(k)\mid k\in I^P_w\}$ of $\mathcal B_{w,B}$ modulo $\langle\checkalpha_j\mid j\in I_P\rangle_\ZZ$. This property, in particular, is highly reliant on the simply-laced condition.

Let now $\XX_{w,P}$ be a Schubert variety of simply-laced type, and $M_{w,P}$ the $I^P_w\times I^P_w$-matrix
\[
M_{w,P}:=(\langle \omega_i,\checkmu_w(k)\rangle)_{k,i\in I^P_w},
\]
where $\mathcal B_{w, P}=\{\checkmu_w(k)\mid k\in I^P_w\}$ is the subset of $\RpwP$ described above.  As part of \cref{mt:Gorenstein}  below, $M_{w, P}$ constructed in this way will be invertible over $\ZZ$. We let $N_{w,P}$ denote the inverse matrix to $M_{w,P}$, and consider the vector $N_{w,P}(\mathbf h+\mathbf 1)=(\hat n_k)_{k\in I^P_w}$ in $\ZZ^{I^P_w}$, where $\mathbf 1=(1,\dotsc, 1)$, and $\mathbf h=(h_k)_{k\in I^P_w}$ is the vector of heights of the $\checkmu_w(k)$.
\begin{mainthm} \label{mt:Gorenstein}
Let $\XX_{w,P}$ be a simply-laced Schubert variety. Then the following statements hold.
\begin{enumerate}
\item The matrix $M_{w,P}$ is invertible over~$\ZZ$.
\end{enumerate}
\begin{enumerate}
\item[(2)] The Schubert variety $\XX_{w,P}$ is Gorenstein if and only if for every $\checketa\in\RpwP\setminus\mathcal B_{w,P}$ we have $\left\langle\sum_{k\in I^P_w}\hat n_k\omega_k\, ,\,\checketa\right\rangle  -\height(\checketa)=1$.
\item[(3)] Assuming $\XX_{w,P}$ is Gorenstein, we have
\[[-K_{\XX_{w,P}}]=\sum_{k\in I^P_w}\hat n_k[\XX^{s_k}_{w, P}],
\]
in terms of the basis of Cartier divisor classes from \cref{mt:PicardGroup}. Moreover, $\XX_{w,P}$ is Gorenstein Fano if and only if  $\hat n_k> 0$   for all $k\in I^P_w$.
\end{enumerate}

\end{mainthm}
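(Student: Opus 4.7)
The plan is to prove the three parts in sequence, using \cref{mt:PicardGroup} and the basis property of $\mathcal B_{w,P}$ established in the preceding sections. For part (1), I interpret $M_{w,P}$ as the Gram matrix of the natural cap-product pairing $H^2(\XX_{w,P},\ZZ)\times H_2(\XX_{w,P},\ZZ)\to\ZZ$ with respect to two distinguished $\ZZ$-bases. The cohomology basis is $\{[\mathcal O(\XX^{s_i}_{w,P})]\}_{i\in I^P_w}$, coming from \cref{mt:PicardGroup} and representing the first Chern classes $c_1(\mathcal L_{\omega_i}|_{\XX_{w,P}})$. The homology basis $\{[\checkmu_w(k)]\}_{k\in I^P_w}$ is furnished by $\mathcal B_{w,P}$, constructed in earlier sections precisely so that it gives a $\ZZ$-basis of $H_2(\XX_{w,P},\ZZ)$ under $(\cap[\XX_{w,P}])^*$. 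The $(k,i)$-entry $\langle\omega_i,\checkmu_w(k)\rangle$ equals $c_1(\mathcal L_{\omega_i})\cap[\checkmu_w(k)]$, so $M_{w,P}$ is the matrix of a perfect pairing between two $\ZZ$-bases and hence lies in $GL_{|I^P_w|}(\ZZ)$.

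For part (2), I compute $[-K_{\XX_{w,P}}]$ in the Weil divisor class group $A_{\ell(w)-1}(\XX_{w,P})$, whose natural $\ZZ$-basis is indexed by $\RpwP$. Starting from the anticanonical description $-K_{\XX_{w,P}}=\sum_{i\in I^B_w}\XX^{s_i}_{w,P}+\sum_{\checketa\in\RpwP}\XX_{ws_{\checketa},P}$ recalled above the theorem, and expanding each Cartier divisor via $[\XX^{s_i}_{w,P}]=\sum_{\checketa\in\RpwP}\langle\omega_i,\checketa\rangle[\XX_{ws_{\checketa},P}]$ (pair $c_1(\mathcal L_{\omega_i}|_{\XX_{w,P}})$ against curves dual to the Schubert divisors), the identity $\sum_{i\in I^B_w}\langle\omega_i,\checketa\rangle=\height(\checketa)$ --- valid because the simple-root support of any $\checketa\in\RpwP$ lies in $I^B_w$ --- yields
\[
[-K_{\XX_{w,P}}]=\sum_{\checketa\in\RpwP}(\height(\checketa)+1)[\XX_{ws_{\checketa},P}].
\]
The variety is Gorenstein precisely when this Weil class lies in the Cartier sublattice, equivalently when there exists an integer vector $(\hat m_k)_{k\in I^P_w}$ satisfying $\sum_k \hat m_k\langle\omega_k,\checketa\rangle=\height(\checketa)+1$ for every $\checketa\in\RpwP$. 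By part (1), the subsystem restricted to $\checketa\in\mathcal B_{w,P}$ has the unique solution $\hat m=N_{w,P}(\mathbf h+\mathbf 1)=\hat n$, so the Gorenstein property amounts to this $\hat n$ satisfying the residual equations over $\RpwP\setminus\mathcal B_{w,P}$ --- which is precisely the condition stated in (2).

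Part (3)'s formula for $[-K_{\XX_{w,P}}]$ in the Cartier basis is immediate from the above derivation. For the Fano criterion, I use that each $\mathcal O(\XX^{s_k}_{w,P})\cong\mathcal L_{\omega_k}|_{\XX_{w,P}}$ is globally generated, hence nef, and pair against the coroot $\PP^1$-curves $\overline{s_k P/P}\subset\XX_{w,P}$ (available since $s_k\le w$ for $k\in I^P_w$), which satisfy $[\XX^{s_i}_{w,P}]\cdot[\overline{s_k P/P}]=\langle\omega_i,\checkalpha_k\rangle=\delta_{ik}$. This duality identifies the nef cone of $\XX_{w,P}$ with the positive orthant in the basis $\{[\XX^{s_k}_{w,P}]\}_{k\in I^P_w}$ and the ample cone with its interior, giving the criterion $\hat n_k>0$. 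The main obstacle is part (1): invertibility of $M_{w,P}$ rests on the basis property of $\mathcal B_{w,P}$ developed in the preceding sections, which is delicate and relies essentially on the simply-laced hypothesis through the compatibility of $\mathcal B_{w,P}$ with $\mathcal B_{w,B}$ modulo $\langle\checkalpha_j\mid j\in I_P\rangle_\ZZ$; a secondary subtlety in (3) is justifying that duality with the coroot curves alone suffices, via the Kleiman criterion, to pin down the ample cone.
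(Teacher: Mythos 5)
Your argument is correct, and parts (1) and (2) follow the paper's route with only cosmetic repackaging. For part (1) you rephrase the invertibility of $M_{w,P}$ as the statement that it is the Gram matrix of the perfect evaluation pairing $H^2\times H_2\to\ZZ$ between the $\ZZ$-bases from \cref{mt:PicardGroup} and $\mathcal B_{w,P}$; this is equivalent to the paper's argument, which establishes the unipotent triangular structure of $M_{w,P}$ directly in \cref{p:BwP} (and that triangular structure is precisely how the paper proves $\mathcal B_{w,P}$ is a $\ZZ$-basis in the first place, so the two formulations are the same fact in different clothes). Part (2) matches the paper's proof of \cref{p:anticanon} and \cref{p:anticanon2} step by step: express $[-K]$ in the Schubert basis, solve the subsystem indexed by $\mathcal B_{w,P}$ uniquely via (1), then read off the Gorenstein condition from the residual equations.

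Where you genuinely diverge is the Fano criterion in part (3). The paper (\cref{c:Fano}) reduces to the case where $w$ has full support by passing to a Levi subgroup, observes that $\mathcal L_{\lambda}$ with all $\hat n_k>0$ is ample on $G/P$ and hence restricts to an ample bundle on $\XX_{w,P}$, and for the converse restricts to the $\PP^1$-curves $\XX_{s_k,P}$. You instead argue intrinsically on $\XX_{w,P}$: the nef cone contains the orthant spanned by the globally generated $\mathcal L_{\omega_k}|_{\XX_{w,P}}$, and pairing against the curve classes $[\overline{s_kP/P}]$ (which generate the Mori cone, since every effective curve class is forced into the nonnegative orthant by nefness of the $\mathcal L_{\omega_k}$) shows the nef cone equals the orthant; Kleiman then identifies the ample cone with the open orthant. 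Your version avoids the Levi reduction and makes the Mori-cone structure explicit, at the cost of having to verify that the Schubert curves indeed span the closed Mori cone of the singular Schubert variety; the paper's version is shorter but relies on the full-support reduction and inherits ampleness from the ambient $G/P$. Both are valid.
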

For the first part of this theorem, that $M_{w,P}$ is invertible over $\ZZ$, we in fact show that this matrix is a unipotent triangular matrix for some ordering of $I^P_w$.
This result is also a key ingredient in the proof of \cref{mt:simplylaced-factorial}.  It is a special feature of the simply-laced case that cannot be extended to the non-simply-laced setting.

Note that the columns of the matrix $N_{w,P}$ may be naturally interpreted as elements of a dual basis to $\mathcal B_{w,P}$. Namely, $\checkmu_w^*(k)=\sum_{k'\in I^P_w} n_{k'k}\omega_{k'}$ is the dual basis element to $\checkmu_w(k)$. Furthermore, $\mathcal B^*_{w,P}=\{\checkmu_w^*(k)\mid k\in I^P_w\}$ can be interpreted as a basis of $H^2(\XX_{w,P},\ZZ)$. In these terms we have whenever $\XX_{w,P}$ is Gorenstein that
\begin{equation}\label{e:c1B}
c_1(\XX_{w,P})= \sum_{k\in I^P_w}\left(\height(\checkmu_w(k))+1\right) \checkmu^*_w(k),
\end{equation}
and if additionally $P=B$, then this simplifies to
\begin{equation}\label{e:c1P}
c_1(\XX_{w,B})= \sum_{i\in I^B_w}\omega_i + \sum_{i\in I^B_w} \checkmu^*_w(i),
\end{equation}
which,  if $I^B_w=I$,  is just the $\rho$-shift of the sum of the $\checkmu^*_w(i)$. Moreover, in the simply-laced $G/B$ Schubert variety case, the condition to be Gorenstein from \cref{mt:Gorenstein} can be reformulated as
\begin{equation}\label{e:IntroBGorenstein}
\text{$\XX_{w,B}$ is Gorenstein} \quad \iff\quad  \langle\sum_{i\in I^B_w} \checkmu^*_w(i),\checketa\rangle=1 \text{ for all $\checketa\in \RpwB\setminus\mathcal B_{w,B}$.}
\end{equation}

We finally remark that it was shown in \cite{KWY} that local properties of Richardson varieties $\XX_{w, P}^{u, P}$ can be reduced to that of
the Schubert varieties $\XX_{w, P}$ and $\XX^{u, P}$. Our theorems above may have further applications in the study of the singularities of Richardson varieties.
\subsection{} The paper is organised as follows. After introducing notations we define the important set $\RpwP$ of positive coroots associated to a Schubert variety $\XX_{w,P}$.  In \cref{s:RpwP} we first characterise the set $\RpwB$ in terms of root system combinatorics as set of `indecomposable' elements in $\check R^+(w)$, with special consideration of the simply-laced case. See \cref{p:indecomposableV2} and \cref{c:RwBsl}. These results are extended  in \cref{p:RwP} to a characterisation of $\RpwP$ for general $P$. Next, in \cref{s:Picard}, we describe the Picard group $\Pic(\XX_{w,P})$, identifying it with a subgroup of the Weil divisor class group. This is where \cref{mt:PicardGroup} is proved. We obtain a characterisation of factorial and $\QQ$-factorial Schubert varieties in terms of $\RpwP$ in \cref{s:factorial}. The main results in this section are \cref{p:Qcal} and \cref{t:Factorialsl}, which imply both \cref{mt:simplylaced-factorial} and \cref{mt:factorialgen}. The proof of \cref{t:Factorialsl} involves the  construction of a subset $\mathcal B_{w,P}$ in $\RpwP$ for any simply-laced Schubert variety $\XX_{w,P}$. The construction of this subset is carried out in \cref{s:BwB} (Borel case) and \cref{s:BwP} (extension to arbitrary parabolic $P$). In \cref{s:Fano} we make use of this set $\mathcal B_{w,P}$ to determine when a simply-laced Schubert variety is Gorenstein  and  in that case compute its first Chern class. In that section we also compute the first Chern class for any factorial Schubert variety in  general type, see \cref{p:anticanon} and \cref{p:anticanon2}. We then  characterise which of these Schubert varieties are Fano in \cref{c:Fano}. These propositions prove \cref{mt:Gorenstein} and \cref{mt:factorialFano}. We provide a wealth of examples in \cref{s:Examples}. We also formulate three conjectures related to Weyl group combinatorics that are stated at the end of \cref{s:RpwP} and \cref{s:BwB}.

\subsection*{Acknowledgements}
The authors  thank Hongsheng Hu, Haidong Liu, Nick Shepherd-Barron, Lauren Williams, David Sheard, Rui Xiong and Alexander Yong for helpful  conversations. C. Li  is  supported   in part by the National Key Research
and Development Program of China No. 2023YFA1009801 and NSFC Grant   12271529.

 \section{Notation}\label{s:notation}
In this section,  we review some relevant background from Lie theory and algebraic geometry, and refer to   \cite{BL:Book,Bour, Springer:Book, BjornerBrentiBook, Brion, Kollar, Kumar} for more details.
\subsection{Notations in Lie theory}
Let $G$ be a simply-connected simple complex algebraic group, $B$ be a Borel subgroup of $G$, and $P\supseteq B$ be a parabolic subgroup of $G$. Let $B_-$ denote the Borel subgroup opposite to $B$, and then $T:=B\cap B_-$ is a maximal torus of $G$ with Lie algebra $\mathfrak{t}=\mbox{Lie}(T)$.
Let $\Pi=\{\alpha_i\mid i\in I\}\subset \mathfrak{t}^*$ (resp.  $\check \Pi=\{\checkalpha_i\mid i\in I\}\subset \mathfrak{t})$
  denote the set of simple roots (resp. coroots) inside the positive roots  $R^+$
  (resp. coroots  $\check R^+$). Let $\{\omega_i\mid i\in I\}\subset \mathfrak{t}^*$ denote the  fundamental weights. Denote  $\rho:=\sum_{i\in I}\omega_i$.

  We express elements  of the Weyl group $W$ in terms of the simple reflection generators $W=\langle s_i\mid i\in I\rangle$. We also take a lifting $\dot s_i$ in the normalizer $N_G(T)$ of $T$, so that $s_i\mapsto \dot s_i T$ defines a (canonical) isomorphism $W\cong N_G(T)/T$. Associate to $P$, we have a subset $I_P:=\{i\in I\mid \dot s_i T\subset P\}$, and denote $I^P:=I\setminus I_P$.
  Let  $W_P=\langle s_i\mid i\in I_P\rangle$ denote the Weyl subgroup of $W$ associated to $P$, and $\ell: W\to \mathbb{Z}_{\geq 0}$ denote the standard length function. Then we have the subset $W^P\subset W$, consisting  of minimal-length coset representatives for $W/W_P$.  We note that $w\in W^P$ if and only if $w(\alpha_j)>0$ for all $j\in I_P$. Let $(W, <)$ denote the Bruhat order on $W$. Namely for $u, w\in W$, we have $u<w$ if some substring of a reduced expression for $w$ gives a reduced expression for $u$.  We further write
    $u\prec w$ if $w$ covers $u$ in the Bruhat order, that is, if $u<w$ and $\ell(u)=\ell(w)-1$.

 For $w\in W$, we denote by  $\check R^+(w)$  the inversion set of coroots, and write out its elements using a reduced expression $w=s_{i_1}\dotsc s_{i_r}$,
   \[
\check R^+(w)=\{\checkalpha\in \check R^+\mid w(\checkalpha)<0\}= \{\checkalpha_{i_r}, s_{i_r}(\checkalpha_{i_{r-1}}),\dotsc, s_{i_r}s_{i_{r-1}}\dotsc s_{i_2}(\checkalpha_1)\}.
\]
There is a total ordering $<_{\mathbf i}$ of $\check R^+(w)$ determined by the chosen reduced expression of $w$, according to which the elements in $\check R^+(w)$ as listed above are in order. See \cite{Papi} for a full characterisation of these orderings. We only note that       the ordering $<_{\mathbf i}$ has the property that if $\checketa'<_{\mathbf i}\checketa''\in \check R^+(w)$ and $r',r''\in \mathbb R_{>0}$ are such that $\checketa=r'\checketa'+r''\checketa''$ lies in $\check R^+$, then $\checketa\in \check R^+(w)$ and $\checketa'<_{\mathbf i} \checketa<_{\mathbf i} \checketa''$.  An ordering with this property is called a \textit{ reflection ordering}, see~\cite{Dyer93}.
We may also write $\checketa^{\mathbf i}_{(k)}$ for the $k$-th element of this list, with $\mathbf i=(i_1,\dotsc,i_r)$ encoding the reduced expression.

\subsection{Geometry of flag varieties}  The flag variety $G/P$ is a smooth projective variety. Abuse the notation for $w$ with its image $\dot w T$ in $N_G(T)/T$. Inside $G/P$, there are Schubert varieties
   $$\XX_{w, P}:=\overline{BwP/P},\qquad  \XX^{v, P}:=\overline{B_-vP/P} $$
of dimension $\ell(w)$ and codimension $\ell(v)$ respectively, where $w, v\in W^P$. Their intersections $\XX^{v, P}_{w, P}=\XX_{w, P}\cap \XX^{v, P}$  are called Richardson varieties in $G/P$. Note that  $\XX^{v, P}_{w, P}$ is a reduced irreducible subvariety of dimension $\ell(w)-\ell(v)$ if $v\leq w$, or an empty set otherwise.
Let $\pi_P: G/B\to G/P$ denote the natural projection. We simply denote
$$\XX^{v}_{w, B}:=\XX^{v, B}_{w, B}, \qquad \XX_{w, P}^{v}:=\pi_P(\XX_{w, B}^{v})$$
the latter of which is called a projected Richardson variety in $G/P$, distinct from the Richardson variety $\XX^{v, P}_{w, P}$  in $G/P$ in general.
\begin{remark}\label{r:Richardson}
In $SL_3/P=\mathbb P^2$ where $I^P=\{1\}$ the projected Richardson variety $\XX^{s_2}_{s_2s_1, P}$ is already  not a Richardson variety in $\mathbb P^2$, see \cite[Example 1.23]{Speyer:Richardson}. Though it is still a divisor. However, when $k\in I_w^P$, we have
$$\XX^{s_k}_{w,P}=\pi(\XX^{s_i, B} \cap  \XX_{w, B})\subset \pi(\XX^{s_k, B})\cap \pi(\XX_{w, B})= \XX^{s_k, P}\cap \XX_{w, P}.$$
Both are irreducible and of the same dimension $\ell(w)-1$. Hence, they coincide with each other.
\end{remark}

 The Chow ring $A^*(G/P)$ is isomorphic to the integral cohomology $H^*(G/P, \mathbb{Z})$, which has a canonical  basis of  Schubert classes  $P.D.[\XX^{v, P}]\in H^{2\ell(v)}(G/P, \mathbb{Z})$. In particular, the Picard group $\Pic(G/P)=A^1(G/P)$ is given by $H^2(G/P, \mathbb{Z})$.

\subsection{Local properties}
In birational geometry, it is fundamental to study the local properties of a complex variety, such as whether it is factorial, Gorenstein, or Cohen-Macaulay among other properties. Schubert varieties are known to be normal and Cohen-Macaulay with rational singularities in complete generality. But they may or may not be Gorenstein or factorial.
\begin{defn}
A normal variety $X$ is called Gorenstein (resp. $\mathbb{Q}$-Gorenstein) if
its canonical divisor $K_X$ is Cartier (resp. $\QQ$-Cartier).
\end{defn}

\begin{defn}
A  variety $X$ is called factorial (resp. $\mathbb{Q}$-factorial) if
every Weil (resp. $\QQ$-Weil) divisor of $X$ is   Cartier (resp. $\QQ$-Cartier).
\end{defn}
We note that there are notions of Gorenstein/factorial defined by properties on the local ring $\mathcal{O}_{X, x}$   (see e.g. \cite{BrHe}). To define Gorenstein, $X$ is assumed to be Cohen-Macaulay instead of normal.  The two kinds of definitions  are equivalent when $X$ is normal and Cohen-Macaulay, in particular when $X=\XX_{w, P}$ is a Schubert variety.
These properties measure the singularities of $X$ with  the following relationships.
$$ \mbox{smooth}\ \ \subset\ \  \mbox{factorial}\ \ \subset \ \ \mbox{Gorenstein}\ \  \subset \ \ \mbox{normal } \& \mbox{ Cohen-Macaulay}\ \  \subset\ \  \mbox{arbitrary singularities}$$

\section{The set of coroots $\RpwP$}\label{s:RpwP}
In this section, we  define a special set of coroots $\RpwP$   associated to $\XX_{w,P}$. The heart of this section is Proposition \ref{p:indecomposableV2}, which  provides a characterisation of the special case $\RpwB$, and \cref{p:RwP} which adapts the characterisation of $\RpwB$ to general $\RpwP$. We also prove a key lemma, \cref{l:charWPcond}, which will play an important role again in \cref{s:BwP}.
\begin{defn}\label{d:R+wP}  For $w\in W^P$, we   define    a subset of $\check R^+(w)$ by
\begin{equation}\label{e:RpwP}
\begin{array}{ccl}
\RpwP &:=&\{\checketa\in \check R^+(w)\mid \ell(ws_\checketa)=\ell(w)-1, w s_\checketa\in W^P\}.
\end{array}
\end{equation}
\end{defn}

Note $W^B=W$. The special case,
\begin{equation}
\begin{array}{ccl}\label{e:RpwB}
\RpwB&=&\{\checketa\in \check R^+(w)\mid \ell(ws_\checketa)=\ell(w)-1\},
\end{array}
\end{equation}
is known as the  \emph{cover inversion set} of coroots for $w\in W$.

\begin{defn}\label{d:indecomposableV2}
We call an element $\checketa\in \check R^+(w)$
\begin{itemize}
\item \textit{simply decomposable} in $\check R^+(w)$, if $\checketa=\checkmu+\checkmu'$ for some $\checkmu,\checkmu'\in\check R^+(w)$,
\item \textit{decomposable} in
$\check R^+(w)$, if $\checketa=\frac{1}c(\checkmu+\checkmu')$ for two distinct $\checkmu,\checkmu'\in\check R^+(w)$ and $c\in\ZZ_{>0}$,
\item \textit{weakly decomposable} in $\check R^+(w)$, if $\checketa=r'\checkmu+r''\checkmu'$ for two distinct $\checkmu,\checkmu'\in\check R^+(w)$ and $r,r'\in \mathbb R_{>0}$.
\end{itemize}
We   say that $\checketa\in \check R^+(w)$ is \textit{indecomposable}, if it is not decomposable  in $\check R^+(w)$.
\end{defn}

\begin{lemma}\label{l:decomposablesl}
Suppose $G$ is simply-laced and let $w\in W$ and $\checketa\in \check R^+(w)$. If $\checketa=r\checkmu+r'\checkmu'$ for some $r,r'\in\mathbb R_{>0}$ and distinct $\checkmu,\checkmu'\in\check R^+(w)$, then we have the following.
\begin{enumerate}
\item $r=r'=1$, that is, $\checketa=\checkmu+\checkmu'$.
\item $\langle\eta,\checkmu\rangle=\langle\eta,\checkmu'\rangle=1$ and $\langle\mu,\checkmu'\rangle=-1$.
\end{enumerate}
 In particular, 
 the three notions of `decomposable' from \cref{d:indecomposableV2} are equivalent in simply-laced types.
\end{lemma}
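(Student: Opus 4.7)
The plan is to combine two ingredients: a length computation using the simply-laced hypothesis, and an enumeration of positive coroots in the rank-$2$ sub-coroot system spanned by $\checkmu$ and $\checkmu'$.

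First I would use that in simply-laced type all coroots share a common length. Applying the (suitably normalised) $W$-invariant inner product to both sides of $\checketa=r\checkmu+r'\checkmu'$ then yields the identity $1=r^{2}+r'^{2}+rr'\langle\mu,\checkmu'\rangle$, where the Cartan integer $\langle\mu,\checkmu'\rangle\in\{-1,0,1\}$ because $\checkmu$ and $\checkmu'$ are distinct positive coroots. Moreover $\checkmu$ and $\checkmu'$ are linearly independent, since two distinct positive coroots in a reduced root system cannot be positive scalar multiples of one another, so $r$ and $r'$ are uniquely determined by $\checketa$.

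Next I would use that in simply-laced type every rank-$2$ sub-coroot system is of type $A_1\times A_1$ or $A_2$ (since $B_2$ and $G_2$ require multiple root lengths), and enumerate positive coroots lying in the plane $\mathbb R\checkmu+\mathbb R\checkmu'$ in each case of $\langle\mu,\checkmu'\rangle$. When $\langle\mu,\checkmu'\rangle=0$, the positive coroots in this plane are just $\checkmu$ and $\checkmu'$, neither of which is of the form $r\checkmu+r'\checkmu'$ with both $r,r'$ strictly positive. When $\langle\mu,\checkmu'\rangle=1$, up to swapping $\checkmu\leftrightarrow\checkmu'$ the third positive coroot in the plane is $\checkmu-\checkmu'$, whose decomposition again has a non-positive coefficient. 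Only $\langle\mu,\checkmu'\rangle=-1$ survives, and then the unique positive coroot in the plane distinct from $\checkmu,\checkmu'$ is $\checkmu+\checkmu'$, forcing $\checketa=\checkmu+\checkmu'$ and $r=r'=1$. This proves~(1). For the remaining assertions in~(2), I would expand $\langle\eta,\checkmu\rangle=\langle\mu+\mu',\checkmu\rangle=2+\langle\mu',\checkmu\rangle$ and use that the Cartan matrix is symmetric in simply-laced type to rewrite $\langle\mu',\checkmu\rangle=\langle\mu,\checkmu'\rangle=-1$; similarly $\langle\eta,\checkmu'\rangle=1$.

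For the equivalence of the three notions of decomposability, the chain \emph{simply decomposable} $\Rightarrow$ \emph{decomposable} $\Rightarrow$ \emph{weakly decomposable} is immediate by taking $c=1$ and by reading off $r=r'=1/c$. Conversely, any weakly decomposable $\checketa$ has $r=r'=1$ by~(1), so it is already simply decomposable. The only real step is the rank-$2$ case analysis above; it has no deep obstacle beyond the bookkeeping, but the argument crucially relies on simply-lacedness to rule out $B_2$ and $G_2$ subsystems, which would otherwise admit genuinely non-trivial coefficient pairs $(r,r')$.
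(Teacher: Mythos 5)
Your proposal is correct, and it takes a genuinely different route from the paper. The paper's proof first places $\checkmu'<_{\mathbf i}\checketa<_{\mathbf i}\checkmu$ in a reflection ordering, truncates to reduce to the case where $\checkmu'$ is a simple coroot, and then exploits a coefficient-counting observation (in simply-laced type a non-simple positive coroot has at least two coefficients equal to $1$) to pin down $r=1$ and then $r'=1$. You instead work with the $W$-invariant inner product and the classification of rank-$2$ sub-coroot systems: since $\checkmu,\checkmu'$ are distinct positive coroots they are linearly independent, the coroots in the plane $\mathbb R\checkmu+\mathbb R\checkmu'$ form a rank-$2$ simply-laced system (hence $A_1\times A_1$ or $A_2$), and a case split on $\langle\mu,\checkmu'\rangle\in\{-1,0,1\}$ shows the only positive coroot with both coefficients strictly positive is $\checkmu+\checkmu'$ in the $A_2$ case $\langle\mu,\checkmu'\rangle=-1$. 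Both arguments are valid. Yours is cleaner and more conceptual, and in fact establishes the slightly more general assertion for any three positive coroots (the inversion set $\check R^+(w)$ plays no role beyond positivity), whereas the paper's reduction genuinely uses the reflection-ordering structure of $\check R^+(w)$. One small remark: the opening inner-product identity $1=r^2+r'^2+rr'\langle\mu,\checkmu'\rangle$ is not load-bearing --- by itself it does not exclude the cases $\langle\mu,\checkmu'\rangle\in\{0,1\}$, which admit positive real solutions --- the rank-$2$ classification is what carries the argument; you might streamline by leading with the classification directly. Part (2) and the equivalence of the three decomposability notions then follow exactly as you say.
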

\begin{proof}
We may assume $\checkmu'<_{\mathbf i} \checketa<_{\mathbf i} \checkmu$ for the reflection ordering associated to some reduced expression $\mathbf i$ of $w$. We may moreover assume that $\checkmu'=\checkalpha_i$, a simple coroot. Namely, if not then write $\checkmu'=s_{i_r}\dotsc s_{i_{\ell+1}}(\checkalpha_{i_\ell})$, factorise $w$ as
$w=w_1w_2=(s_{i_1}\dotsc s_{i_{\ell}})(s_{i_{\ell+1}}\dotsc s_{i_r})$, and apply $s_{i_{\ell+1}}\dotsc s_{i_r}$ to $\checketa=r\checkmu+r'\checkmu'$. We then obtain an identity of the same type, but now in $\check R^+(w_1)$ and with $\checkmu'$ replaced by $\checkalpha_{i_\ell}$.

We now assume $\checkmu'=\checkalpha_i$ and $\checketa=r\checkmu + r'\checkalpha_i$, with $r,r'\in\mathbb R_{>0}$.
For any positive coroot expanded in terms of the simple coroots $\checkalpha_j$ we make the following straightforward observation that holds in the simply-laced case.
\begin{center}
$(\star)\quad$ If a positive coroot is not simple, then at least \textit{two}
 of its coefficients will be equal to $1$.
\end{center}
We make use of this fact while comparing the two sides of the equation,
\begin{equation}\label{e:checketadecomp0}
\checketa-r'\checkalpha_i=r\checkmu.
\end{equation}
The left-hand side of \eqref{e:checketadecomp0} has the following properties.
\begin{enumerate}
\item[(i)] All coefficients are integral except for at most one: the coefficient of $\checkalpha_i$.
\item[(ii)] At least one coefficient is equal to $1$, by $(\star)$.
\end{enumerate}
For the right-hand side, observe that if $r$ is not an integer, then at least \textit{two} coefficients of $r\checkmu$ are not integers by $(\star)$, contradicting (i). Therefore $r$ must be an integer. Now if $r\ne 1$, then since $r$ is positive and an integer we have $r\ge 2$. This implies that every nonzero coefficient of $r\checkmu$ is $\ge 2$, in contradiction with (ii). Therefore it follows that $r=1$.

Next consider $r'\in\mathbb R_{>0}$. Since $\checketa=\checkmu+r'\checkalpha_i$ we have that $r'$ is a positive integer and
\[
\langle \alpha_i,\checketa\rangle=\langle\alpha_i,\checkmu\rangle + 2r'.
\]
Since in the simply-laced type the pairings take values in $\{\pm 1,0,2\}$,  the only solution is $\langle \alpha_i,\checketa\rangle=1, \langle \alpha_i,\checkmu\rangle=-1$ and $r'=1$. Both statements of the lemma  now follow.
\end{proof}
\begin{prop}\label{p:indecomposableV2}
Fix $w\in W$ and suppose $\checketa\in\check R^+(w)$. The following statements are equivalent.
\begin{enumerate}
\item $\ell(ws_\checketa)=\ell(w)-1$.
\item $\checketa$ is not decomposable in $\check R^+(w)$.
\end{enumerate}
The set $\RpwB$  is the subset of indecomposable elements in $\check R^+(w)$.
\end{prop}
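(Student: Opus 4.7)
Plan:

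I will prove the equivalence $(1)\iff(2)$; the concluding statement about $\RpwB$ is then immediate from the definition~\eqref{e:RpwB}.

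Fix a reduced expression $\mathbf i=(i_1,\dots,i_r)$ of $w$ with its induced reflection ordering $<_{\mathbf i}$ on $\check R^+(w)$, and let $k$ denote the position in $\mathbf i$ with $\checketa=s_{i_r}\cdots s_{i_{k+1}}(\checkalpha_{i_k})$. Using the factorisation $s_\checketa=(s_{i_r}\cdots s_{i_{k+1}})\,s_{i_k}\,(s_{i_{k+1}}\cdots s_{i_r})$ and telescoping cancellations, I obtain
\[
ws_\checketa=s_{i_1}\cdots\widehat{s_{i_k}}\cdots s_{i_r},
\]
so condition (1) holds iff this deleted subword is a reduced expression.

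The crux of the proof is a reduction to rank-two sub-root-systems. For any rank-two subsystem $\Psi\subset R$ containing $\checketa$, the intersection $\check R^+(w)\cap\Psi^+$ is a biclosed subset of $\Psi^+$ and hence equals the inversion set of a unique dihedral element $w_\Psi\in W_\Psi$. I will invoke the following restriction principle, which I expect to be the main technical ingredient: for $\checketa\in\Psi^+\cap\check R^+(w)$, the length drop in $W$ at $s_\checketa$ equals the length drop in $W_\Psi$ at $s_\checketa$ acting on $w_\Psi$. Granting this, the equivalence reduces to the rank-two setting. In a dihedral Weyl group, $\check R^+(w_\Psi)$ is a contiguous interval under the reflection ordering on $\Psi^+$, and $\checketa$ has length drop exactly~$1$ in $W_\Psi$ iff it is an endpoint of this interval. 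A direct case-by-case check over the four rank-two types $A_1\times A_1,A_2,B_2,G_2$ shows that $\checketa$ fails to be an endpoint iff its immediate neighbours $\checkmu_{\mathrm{prev}},\checkmu_{\mathrm{next}}\in\Psi^+$ both lie in $\check R^+(w_\Psi)$, in which case they satisfy $c\checketa=\checkmu_{\mathrm{prev}}+\checkmu_{\mathrm{next}}$ for some $c\in\ZZ_{>0}$. Conversely, any decomposition $c\checketa=\checkmu+\checkmu'$ with distinct $\checkmu,\checkmu'\in\check R^+(w)$ forces the rank-two subsystem $\Psi$ spanned by $\checkmu,\checkmu'$ to contain $\checketa$ strictly between them in the reflection ordering (by the reflection-ordering property recalled in \cref{s:notation}), and hence the immediate neighbours of $\checketa$ in this $\Psi^+$ also lie in $\check R^+(w_\Psi)$. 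These two implications together establish the equivalence of~(1) and~(2).

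The principal obstacle I anticipate is justifying the rank-two restriction principle. I would approach it either via a careful biclosedness analysis on inversion sets (showing that the inversion data of $w$ restricted to $\Psi^+$ captures the full length-drop behaviour of reflections in $\Psi$), or by using Matsumoto's theorem to rearrange a reduced expression of $w$ into one whose initial segment is a reduced expression for $w_\Psi$, so that length drops in the two groups become directly comparable. The delicate point in the non-simply-laced case is that $c>1$ can occur in the identity $c\checketa=\checkmu_{\mathrm{prev}}+\checkmu_{\mathrm{next}}$ (in contrast to \cref{l:decomposablesl}), which is why the proposition is formulated using the `decomposable' notion rather than `simply decomposable'.
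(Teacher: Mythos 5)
Your reduction-to-rank-two idea is genuinely different from the paper's approach and would be elegant, but the ``restriction principle'' on which it hinges is false as you have stated it, and you have not established any corrected version of it. You assert that for any rank-two subsystem $\Psi$ containing $\checketa$, the length drop $\ell(w)-\ell(ws_\checketa)$ in $W$ equals $\ell_\Psi(w_\Psi)-\ell_\Psi(w_\Psi s_\checketa)$ in $W_\Psi$. This is wrong: take $W=S_4$ (type $A_3$), $w=w_0$, $\checketa=\checkalpha_1+\checkalpha_2$, and let $\Psi$ be the $A_1\times A_1$ subsystem spanned by $\checkalpha_1+\checkalpha_2$ and $\checkalpha_2+\checkalpha_3$; then $\ell(w_0)-\ell(w_0 s_\checketa)=6-3=3$, while $w_\Psi=s_{\checkalpha_1+\checkalpha_2}s_{\checkalpha_2+\checkalpha_3}$ gives a dihedral drop of $2-1=1$. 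What is actually true, and what your subsequent argument really uses, is the universally quantified statement: $\ell(w)-\ell(ws_\checketa)=1$ if and only if the drop equals $1$ in $W_\Psi$ for \emph{every} rank-two $\Psi\ni\checketa$. This can be extracted from the standard identity $\ell(ws_\checketa)=\ell(w)+\ell(s_\checketa)-2\,|\check R^+(s_\checketa)\cap\check R^+(w)|$ by partitioning $\check R^+(s_\checketa)\setminus\{\checketa\}$ according to the rank-two subsystem each coroot spans with $\checketa$; when $\checketa\in\check R^+(w)$ one obtains
\[
\ell(w)-\ell(ws_\checketa)=1+\sum_{\Psi\ni\checketa}\bigl(\ell_\Psi(w_\Psi)-\ell_\Psi(w_\Psi s_\checketa)-1\bigr),
\]
with every summand nonnegative. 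Neither the incorrect statement nor this corrected one is proved in your writeup; the two approaches you sketch (``biclosedness analysis'' and Matsumoto rearrangement) are too vague to count, and you explicitly flag this as the unresolved obstacle.

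By contrast, the paper's proof of \cref{p:indecomposableV2} is entirely self-contained and elementary, with no appeal to reflection-subgroup theory, length formulas, or a case analysis over $A_1\times A_1,A_2,B_2,G_2$. It proves $(2)\Rightarrow(1)$ via the contrapositive by induction on $\ell(w)$: strip the first letter off a reduced word to form $v=s_{i_1}w$, and when the inductive hypothesis does not apply directly, produce an explicit decomposition through the coroots $\check\mu=\hat w^{-1}(\checkalpha_{i_1})$ and $\check\mu'=w^{-1}s_{i_1}(\checkalpha_{i_1})$, verifying by positivity computations that both lie in $\check R^+(w)$ and that $\check\mu+\check\mu'$ is a positive multiple of $\checketa$. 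The proof of $(1)\Rightarrow(2)$ is a direct contradiction argument reducing to the case where one summand is simple. Your route can in principle be repaired, but as written the central lemma is both mis-stated and unproved, which is a genuine gap.
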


The following corollary is simply the combination of \cref{p:indecomposableV2} and \cref{l:decomposablesl}.
\begin{cor}\label{c:RwBsl}
Suppose $G$ is simply-laced. Then $\checketa\in R^+(w)$ lies in the subset $R^+_{w,B}$ if and only if there is no decomposition $\checketa=\checkmu+\checkmu'$ with $\checkmu,\checkmu'\in R^+(w)$. \qed
\end{cor}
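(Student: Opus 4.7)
The plan is to prove both directions of the equivalence simultaneously by establishing the explicit length formula
\[
\ell(w) - \ell(ws_{\checketa}) \;=\; 1 + 2N,
\]
where $N$ counts the unordered pairs $\{\checkmu,\checkmu'\}\subset \check R^+(w)$ with $\checkmu\ne\checkmu'$ and $c\checketa = \checkmu+\checkmu'$ for some $c\in\ZZ_{>0}$. Once this formula is in hand, (1) holds iff $N=0$, which is precisely the condition that $\checketa$ is indecomposable in the sense of \cref{d:indecomposableV2}, giving (2); and the final sentence about $\RpwB$ then follows immediately from~\eqref{e:RpwB}.

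To derive the formula, I would analyze the involution $s_{\checketa}$ acting on the full coroot system $\check R$. Its orbits are the fixed pair $\{\checketa,-\checketa\}$ and two-element orbits $\{\check\gamma, s_{\checketa}(\check\gamma)\}$ with $\check\gamma\ne\pm\checketa$, which split into \emph{balanced} orbits (both representatives positive) and \emph{unbalanced} orbits (exactly one positive). For each unbalanced orbit, writing $s_{\checketa}(\check\gamma) = -\check\delta$ with $\check\delta\in\check R^+$ yields the identity $\check\gamma+\check\delta = \langle \eta,\check\gamma\rangle\,\checketa =: c\,\checketa$ with $c\in\ZZ_{>0}$ (positivity is forced by summing two positive coroots). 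Using the equivalence $\check\gamma\in \check R^+(ws_{\checketa})\iff w(s_{\checketa}(\check\gamma))<0$, one computes orbit by orbit: the fixed pair contributes $+1$ to $|\check R^+(w)|-|\check R^+(ws_{\checketa})|$ because $ws_{\checketa}(\checketa)=-w(\checketa)>0$; each balanced orbit contributes $0$, since $s_{\checketa}$ swaps its elements and interchanges membership in $\check R^+(w)$ with membership in $\check R^+(ws_{\checketa})$; and the two orbits $\{\check\gamma,-\check\delta\},\{\check\delta,-\check\gamma\}$ attached to a single partner pair $\{\check\gamma,\check\delta\}$ together contribute $+2$, $0$, or $-2$ according as both, exactly one, or neither of $\check\gamma,\check\delta$ lies in $\check R^+(w)$.

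The crucial step, and the only place where the hypothesis $\checketa\in\check R^+(w)$ actually enters, is to rule out the ``neither in'' sub-case: if $\check\gamma,\check\delta\notin \check R^+(w)$ then $w(\check\gamma), w(\check\delta) > 0$, hence $c\,w(\checketa) = w(\check\gamma+\check\delta) > 0$, contradicting $w(\checketa)<0$ since $c>0$. With this case excluded, summing the contributions gives the formula above, with $N$ equal to the number of partner pairs contained in $\check R^+(w)$. To identify $N$ with the number of decompositions of $\checketa$, one observes that each such partner pair $\{\check\gamma,\check\delta\}$ supplies the decomposition $\checketa = \tfrac1c(\check\gamma+\check\delta)$, and conversely any decomposition $c\checketa = \checkmu+\checkmu'$ with distinct $\checkmu,\checkmu'\in\check R^+(w)$ satisfies $s_{\checketa}(\checkmu) = -\checkmu'$ by the rank-one identity $\checkmu - s_{\checketa}(\checkmu) = \langle\eta,\checkmu\rangle\checketa$, so $\{\checkmu,\checkmu'\}$ is a partner pair. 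No individual step is substantively hard once the orbit bookkeeping is in place; the only real content is the ``neither in'' exclusion that uses $\checketa\in\check R^+(w)$.
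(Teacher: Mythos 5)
Your orbit-counting argument is correct as far as it goes, and the length formula
\[
\ell(w)-\ell(ws_{\checketa})=1+2N
\]
is a genuine, quantitative strengthening that the paper doesn't state. (Small inaccuracy: the orbits of $s_{\checketa}$ on $\check R$ also include singleton fixed points $\check\gamma$ with $\langle\eta,\check\gamma\rangle=0$, and ``both representatives negative'' doubletons, though these contribute $0$ and the bookkeeping survives. Also, $\{\checketa,-\checketa\}$ is not a fixed pair — $s_\checketa$ swaps them — but it is the distinguished orbit.) The argument is also a genuinely different route from the paper: the paper proves \cref{p:indecomposableV2} by an induction on $\ell(w)$ that peels off a left-most simple reflection, while your route is a direct orbit count that yields the stronger statement in one pass.

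However, there is a gap, and it is exactly the one the hypothesis ``simply-laced'' in the corollary is meant to close. Your $N$ counts unordered pairs $\{\checkmu,\checkmu'\}\subset\check R^+(w)$ with $c\checketa=\checkmu+\checkmu'$ for some $c\in\ZZ_{>0}$ — that is, decompositions in the general sense of \cref{d:indecomposableV2}. Your formula therefore proves \cref{p:indecomposableV2}, which holds in all types. But \cref{c:RwBsl} asserts the equivalence with ``no decomposition $\checketa=\checkmu+\checkmu'$,'' i.e.\ $c=1$ only — the strictly weaker ``simply decomposable'' notion of \cref{d:indecomposableV2}. Deducing the corollary from the proposition requires knowing that in simply-laced type every decomposition automatically has $c=1$; that is precisely \cref{l:decomposablesl}, which you never state or prove, and indeed you never invoke the simply-laced hypothesis anywhere (you explicitly say that the only place a hypothesis is used is $\checketa\in\check R^+(w)$, which should set off alarms, since the corollary is a simply-laced statement). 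The good news is that your own setup supplies the fix cheaply: you have already identified $c=\langle\eta,\check\gamma\rangle$ for the partner pair, and in simply-laced type the pairing of two distinct positive coroots lies in $\{-1,0,1\}$, so $c>0$ forces $c=1$. Adding that one sentence closes the gap and makes the proof complete.
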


\begin{proof}[{Proof of \cref{p:indecomposableV2}}]
Recall that $\checketa\in\check R^+(w)$ implies $\ell(ws_\checketa)<\ell(w)$. We first prove the following statement.
\begin{center}
\noindent $(\star\star)$\quad If
$\ell(ws_\checketa)<\ell(w)-1$,
then $\checketa$ must be decomposable in $\check R^+(w)$.
\end{center}
\noindent This will prove that (2) $\implies$ (1).
\vskip .2cm

We use induction on $\ell(w)$ to prove $(\star\star)$. If $\ell(w)=1$ then the statement is empty. Now we may assume that the statement is proved whenever $w$ has length $\le r-1$. Let us suppose $\ell(w)=r$. We consider an element $\checketa\in \check R^+(w)$ for which $\ell(ws_\checketa)\le \ell(w)-2$.
Write $\checketa=s_{i_r}\dots s_{i_{\ell+1}}(\checkalpha_{i_\ell})$ using a reduced expression $\mathbf i=(i_1,\dotsc, i_r)$ of $w$. Then $ws_\checketa$ is obtained by removing the $s_{i_\ell}$ factor from $w$ where $1< \ell<r$, and we may write
\begin{equation}\label{e:what}
\hat w:=ws_\checketa=(s_{i_1}\dotsc s_{i_{\ell-1}})(s_{i_{\ell+1}}\dotsc s_{i_r}),
\end{equation}
keeping in mind, though, that the above is no longer a reduced expression. Let us also consider the element $v:=s_{i_1}w$. The element $v$ has the following immediate properties,
\begin{equation}\label{e:v}
v=s_{i_2}s_{i_3}\dotsc s_{i_r}, \qquad \ell(v)=r-1, \qquad \checketa\in \check R^+(v)=\check R^+(w)\setminus\{s_{i_r}\dotsc s_{i_2}(\alpha_{i_1})\}.
\end{equation}
By the induction hypothesis, if $\ell(vs_\checketa)\le \ell(v)-2$ then we must have that $\checketa$ is decomposable in $\check R^+(v)$. Therefore by \eqref{e:v} it is also  decomposable in $\check R^+(w)$ and we are done. So we can now assume that $\ell(vs_\checketa)=\ell(v)-1$. Then in summary we have
\begin{equation}\label{e:vproperties}
\ell(vs_\checketa)=\ell(v)-1=r-2, \qquad
\ell(ws_{\checketa})=\ell(s_{i_1}vs_{\checketa})\overset{(\dagger)}=\ell(vs_{\checketa})-1=r-3.
\end{equation}
Note that $\ell(s_{i_1}vs_{\checketa})=\ell(vs_{\checketa})\pm 1$, and the equality $(\dagger)$ above follows from our assumption that $\ell(ws_\checketa)\le \ell(w)-2$. We now have as reduced expression for $vs_\checketa$ the following,
\[
\hat v:=vs_\checketa\overset{\text{red}}{=\joinrel=}s_{i_2}s_{i_3}\dotsc s_{i_{\ell-1}} s_{i_{\ell+1}}\dotsc s_{i_r}.
\]
Additionally, the equality $\ell(s_{i_1}\hat v)=\ell(\hat v)-1$  implies that $\hat v\inv(\checkalpha_{i_1})<0$. We now consider the following two positive coroots,
\begin{eqnarray}\label{e:mumu'}
\check\mu&:=&\hat w\inv(\checkalpha_{i_1})=\hat v\inv s_{i_1}(\alpha_{i_1})>0,\\
\check\mu'&:=&w\inv s_{i_1}(\checkalpha_{i_1})=s_{i_r}\dotsc s_{i_2}(\alpha_{i_1})>0.\notag
\end{eqnarray}
Using $\hat w\inv=s_{\checketa} w\inv $, we compute their sum as follows,
\begin{eqnarray}\label{e:mu+mu'}
\check\mu+\check\mu' &= & s_\checketa w\inv (\checkalpha_{i_1}) - w\inv (\checkalpha_{i_1})\\ &=& \left(w\inv (\checkalpha_{i_1}) - \langle{\eta, w\inv (\checkalpha_{i_1})}\rangle\,  \checketa\right) - w\inv (\checkalpha_{i_1})
=  - \langle{\eta, w\inv (\checkalpha_{i_1})}\rangle \, \checketa.\notag
\end{eqnarray}
Since $\check\mu,\check\mu'$ and $\checketa$ are all positive coroots, we must have that $- \langle{\eta, w\inv (\checkalpha_{i_1})}\rangle =c>0$ (with $c=1$ in the simply-laced case). We now rewrite \eqref{e:mu+mu'} as
\begin{equation}\label{e:checketadecomp}
\checketa=\frac{1}{c}(\check\mu+\check\mu').
\end{equation}
Note that $\check\mu'$ lies in $\check R^+(w)$ by the second expression for it given in \eqref{e:mumu'}. Let us check that also $\check\mu\in \check R^+(w)$. Using again that $\hat w\inv=s_{\checketa} w\inv $, we have
\begin{eqnarray}\label{e:wmu}
w(\check\mu)&=& w s_{\checketa}w\inv (\checkalpha_{i_1})=s_{w(\checketa)}(\checkalpha_{i_1})=
\checkalpha_{i_1}-\langle{w(\eta), \checkalpha_{i_1}}\rangle w(\checketa)
\\
&=&\checkalpha_{i_1}-\langle{\eta, w\inv( \checkalpha_{i_1})}\rangle w(\checketa)=\checkalpha_{i_1}+ c \, w(\checketa).\notag
\end{eqnarray}
Since $\checketa\in\check R^+(w)$ we have $w(\checketa)<0$, and since $c>0$ it follows that $\checkalpha_{i_1}+ c \, w(\checketa)$ cannot be a positive coroot. Thus we deduce from \eqref{e:wmu} that $w(\check\mu)$ is a negative coroot and therefore $\check\mu\in \check R^+(w)$. Now \eqref{e:checketadecomp} demonstrates that $\checketa$ is decomposable in $\check R^+(w)$. This completes the proof the statement $(\star\star)$.

We have now proved (2) $\implies$ (1).
It remains to show (1) $\implies$ (2).
Suppose $\ell(ws_\checketa)=\ell(w)-1$. Thus now the expression in \eqref{e:what} is a reduced expression for $ws_\checketa$.
Let us assume indirectly that $\checketa=\frac{1}c(\checkmu+\checkmu')$
 for  some $c\in \mathbb R_{>0}$ and distinct elements  $\checkmu,\checkmu'\in \check R^+(w)$.   Then in any reflection ordering $<_\mathbf i$ of $\check R^+(w)$ we have that $\checketa$ appears in the middle between the $\checkmu,\checkmu'$. We may assume that
\begin{equation}\label{e:mulessmu'}
\checkmu'<_{\mathbf i}\checketa<_{\mathbf i}\checkmu.
\end{equation}
By truncating $w$ at either end we may additionally assume $\checkmu=s_{i_r}\dotsc s_{i_2}(\checkalpha_{i_1})$ and $\checkmu'=\checkalpha_{i_r}$, compare the proof of \cref{l:decomposablesl}. Let us write $i={i_r}$ for simplicity. We have $\checketa=\frac 1 c(\checkalpha_i+\checkmu)$. Applying $s_\eta$ to both sides gives
\[
-\checketa=\frac 1c(s_{\checketa}(\checkalpha_i)+ s_\checketa s_{i_r}\dotsc s_{i_2}(\checkalpha_{i_1}))=
\frac 1c\left (s_{\checketa}(\checkalpha_i)+ s_{i_r}\dotsc s_{i_{\ell+1}} s_{i_{\ell-1}}\dotsc s_{i_2}(\checkalpha_{i_1})\right).
\]
By our assumption that $\ell(ws_\checketa)=\ell(w)-1$ we have that the right-hand summand in the brackets above is an element of $R^+(w s_\checketa)$. In particular it is positive. Rearranging, we have
\[
  c\,\checketa +  s_{i_r}\dotsc s_{i_{\ell+1}} s_{i_{\ell-1}}\dotsc s_{i_2}(\checkalpha_{i_1})=-s_\checketa(\checkalpha_i)=\langle \eta,\checkalpha_i\rangle\checketa - \checkalpha_i,
\]
and therefore
\[
\checkalpha_i+  s_{i_r}\dotsc s_{i_{\ell+1}} s_{i_{\ell-1}}\dotsc s_{i_2}(\checkalpha_{i_1})= \left(\langle\eta,\checkalpha_i\rangle -c\right)\checketa.
\]
Thus, since the left-hand side is a sum of positive roots, this would imply that
\begin{equation}\label{e:cinequality}
\langle\eta,\checkalpha_i\rangle >c.
\end{equation}
On the other hand, it follows from  $\ell(ws_\checketa)=\ell(w)-1$   that $\checkalpha_i\in \check R^+(ws_\checketa)$. Therefore
\begin{eqnarray}\label{e:wschecketa}w s_\checketa(\checkalpha_i)&=&w\left(\checkalpha_i-\frac{1}c\langle \eta, \checkalpha_i\rangle(\checkmu+\checkalpha_i)\right)
= \left(1-\frac 1c\langle \eta,\checkalpha_i\rangle \right ) w(\checkalpha_i)-\frac 1c\langle \eta,\checkalpha_i\rangle w(\checkmu).\end{eqnarray}
must be negative. But by \eqref{e:cinequality} and since $\checkalpha_i,\checkmu\in \check R^+(w)$, we have that both summands on the right-hand side are positive. Thus we have obtained a contradiction.
\end{proof}

\begin{lemma}\label{l:charWPcond} Let $w\in W^P$ and suppose that $\checketa\in \check R^+(w)$ and $j\in I_P$.
Then $ws_\checketa(\checkalpha_j)<0$ if and only if $s_\checketa(\checkalpha_j)\in  \check R^+(w)$. Moreover
\[
ws_\checketa\in W^P \iff s_\checketa(\checkalpha_j)\in \check R\setminus \check R^+(w)\quad \text{for all $j\in I_P$.}
\]
\end{lemma}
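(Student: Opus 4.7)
The plan is to deduce the second assertion directly from the first, so the main content lies in the first assertion---in particular its forward direction. Let $\check\beta := s_{\checketa}(\checkalpha_j) = \checkalpha_j - c\,\checketa$, where $c := \langle\eta,\checkalpha_j\rangle \in \ZZ$. The reverse direction $(\Leftarrow)$ of the first assertion is immediate from the definition of $\check R^+(w)$: if $\check\beta\in \check R^+(w)$ then $w(\check\beta)<0$, which is exactly $ws_{\checketa}(\checkalpha_j)<0$.

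For the forward direction $(\Rightarrow)$ I would argue by contradiction, assuming $w(\check\beta)<0$ but $\check\beta<0$, and split on the sign of $c$. If $c\leq 0$, then $\check\beta = \checkalpha_j + |c|\,\checketa$ is a nonnegative integer combination of positive coroots with the $\checkalpha_j$-coefficient equal to one; as $\check\beta\in \check R$, this already forces $\check\beta\in \check R^+$, contradicting $\check\beta<0$. So necessarily $c>0$. In that case I would write
\[
w(\check\beta) \;=\; w(\checkalpha_j) \;+\; c\bigl(-w(\checketa)\bigr),
\]
where $w(\checkalpha_j)$ is a positive coroot because $w\in W^P$ and $j\in I_P$, and $-w(\checketa)$ is a positive coroot because $\checketa\in \check R^+(w)$. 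Both summands expand with nonnegative coefficients in the simple coroot basis, so $w(\check\beta)$ does as well. Since $w(\check\beta)$ is itself an element of $\check R$, this forces $w(\check\beta)\in \check R^+$, contradicting $w(\check\beta)<0$. Hence $\check\beta > 0$, and combined with $w(\check\beta)<0$ we conclude $\check\beta\in \check R^+(w)$.

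For the second assertion, once the first is in hand one notes that the condition $ws_{\checketa}\in W^P$ is equivalent to $ws_{\checketa}(\checkalpha_j) > 0$ (equivalently, $\not< 0$) for every $j\in I_P$, and by the first assertion this is in turn equivalent to $s_{\checketa}(\checkalpha_j)\notin \check R^+(w)$ for every $j\in I_P$, i.e., $s_{\checketa}(\checkalpha_j)\in \check R\setminus \check R^+(w)$ for all $j\in I_P$. The only mildly delicate step in the above is the standard but crucial observation that $\check R = \check R^+ \sqcup (-\check R^+)$, so any coroot whose simple-coroot coefficients are all nonnegative must in fact be positive; everything else follows directly from the definitions of $W^P$ and $\check R^+(w)$.
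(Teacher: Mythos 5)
Your proof is correct and follows essentially the same route as the paper's: the reverse direction is immediate from the definition of $\check R^+(w)$, and the forward direction is the same contradiction argument, splitting on the sign of $c=\langle\eta,\checkalpha_j\rangle$ (the paper deduces $c>0$ from the assumed negativity of $s_\checketa(\checkalpha_j)$ in a single step rather than phrasing it as a case split, but the content is identical) and then using $w(\checkalpha_j)>0$ and $w(\checketa)<0$ to force $ws_\checketa(\checkalpha_j)>0$. The second assertion is derived from the first exactly as in the paper.
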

\begin{proof}
If    $s_\checketa(\checkalpha_j)\in \check R^+(w)$, then clearly $ws_\checketa(\checkalpha_j)<0$, by the definition of $\check R^+(w)$.

 Now we assume $ws_\checketa(\checkalpha_j)<0$. If $s_\checketa(\checkalpha_j)<0$, then $\langle \eta, \checkalpha_j \rangle>0$,   because
\begin{equation*}\label{e:setaalapha}
s_\checketa(\checkalpha_j)=\checkalpha_j-\langle \eta, \checkalpha_j\rangle \checketa
\end{equation*}
would otherwise describe $s_\checketa(\checkalpha_j)$ as sum of two positive roots. Since $w\in W^P$,   $w(\checkalpha_j)>0$. Since $\checketa\in R^+(w)$,   $w(\checketa)<0$. Hence,
$ws_{\checketa}(\checkalpha_j)=w(\checkalpha_j)+(-\langle \eta, \checkalpha_j \rangle  w(\checketa))>0$, resulting in a contradiction. Hence,  $s_\checketa(\checkalpha_j)>0$, so that
 it is in  $R^+(w)$. Hence, the first statement follows.

 Note that $ws_\checketa\in W^P$ if and only if $ws_\checketa(\checkalpha_j)>0$ for all $j\in I_P$.  The second statement is therefore a direct consequence of the first statement.
\end{proof}

 In   \cref{p:indecomposableV2}, we have described the set $\RpwB$ directly as natural subset of the set $\check R^+(w)$, without any further reference to $w$ or its reduced expressions. Using  \cref{l:charWPcond}, we can immediately extend this description to describe the subset $\RpwP$ for general $P$ as follows.
\begin{prop}\label{p:RwP} For any parabolic $P$ and $w\in W^P$ we have
\[
\RpwP=\{\checketa\in\check R^+(w)\mid \checketa \text{ is indecomposable in $\check R^+(w)$ and } s_\checketa(\checkalpha_j)\notin \check R^+(w) \text{ whenever $j\in I_P$}\}. \qed
\]
\end{prop}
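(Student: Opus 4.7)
The plan is to deduce this as an immediate corollary of the two preceding results, namely \cref{p:indecomposableV2} characterising the length-drop condition, and \cref{l:charWPcond} characterising the $W^P$-minimality condition for $ws_\checketa$. All the work has already been carried out; nothing beyond unpacking the definitions is required.

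Recall from \eqref{e:RpwP} that
\[
\RpwP=\{\checketa\in \check R^+(w)\mid \ell(ws_\checketa)=\ell(w)-1 \text{ and } ws_\checketa\in W^P\},
\]
so membership in $\RpwP$ is the conjunction of two conditions on $\checketa$. I would treat them in turn. First, by \cref{p:indecomposableV2}, the condition $\ell(ws_\checketa)=\ell(w)-1$ is equivalent to $\checketa$ being indecomposable in $\check R^+(w)$. Second, since the hypotheses of \cref{l:charWPcond} (namely $w\in W^P$ and $\checketa\in \check R^+(w)$) are automatically satisfied, the second part of that lemma gives
\[
ws_\checketa\in W^P \iff s_\checketa(\checkalpha_j)\notin \check R^+(w) \text{ for all } j\in I_P.
\]

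Combining these two equivalences yields exactly the description of $\RpwP$ claimed in the proposition. There is no substantive obstacle in this step; the genuine content has already been established in \cref{p:indecomposableV2} and \cref{l:charWPcond}, and the present proposition is simply their conjunction applied to the defining conditions of $\RpwP$.
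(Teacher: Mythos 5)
Your proposal is correct and coincides exactly with the paper's argument: the paper likewise records the proposition with an immediate \qed after observing that \cref{p:indecomposableV2} handles the length condition and \cref{l:charWPcond} handles the $W^P$ condition. Nothing further needs to be added.
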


\begin{example}\label{ex:G2} Consider $G$ of type $G_2$, with long root $\alpha_1$ and short root $\alpha_2$.
\begin{center}\dynkin[edge length=.8cm,
root radius=.09cm,label]{G}{2}$\qquad \langle\alpha_1,\checkalpha_2\rangle=-3,\quad \langle\alpha_2,\checkalpha_1\rangle=-1$.
\end{center}
There are two reflection orderings on $R^+=R^+(w_0)$ corresponding to $w_0=s_1 s_2 s_1s_2s_1s_2=s_2 s_1s_2s_1s_2 s_1$, or $\mathbf i=(1,2,1,2,1,2)$ and $\mathbf i'=(2,1,2,1,2,1)$. Namely, these are
\[
\checkalpha_{2}\ <_{\mathbf i}\  \checkalpha_1+\checkalpha_2\ <_{\mathbf i} \ 3\checkalpha_1+ 2\checkalpha_2\  <_{\mathbf i}\  2\checkalpha_1+\checkalpha_2  \ <_{\mathbf i}\  3\checkalpha_1+\checkalpha_2\ <_{\mathbf i}\ \checkalpha_1
\]
and its reverse,
\[
\checkalpha_{1}\ <_{\mathbf i'}\  3\checkalpha_1+\checkalpha_2\ <_{\mathbf i'} \ 2\checkalpha_1+ \checkalpha_2\  <_{\mathbf i'}\  3\checkalpha_1+2\checkalpha_2  \ <_{\mathbf i'}\  \checkalpha_1+\checkalpha_2\ <_{\mathbf i'}\ \checkalpha_2.
\]
If $w$ is the length $k$ element of $W$ that ends $s_2$ (resp. $s_1$), then we can read off $\check R^+(w)$ as the first $k$ elements of $\check R^+(w)$ according to $\mathbf i$ ( resp. $\mathbf i'$).   Moreover, the indecomposable elements are precisely the first and the last in the ordered set $\check R^+(w)$ (as can be verified by inspection). These indeed coincide with the elements of the subset $\RpwB$.

For example, if $w_1=s_2 s_1 s_2 s_1$, then we have
\[
\check R^+_{w_1,B}=\{\checkalpha_{1}, \ 3\checkalpha_1+2\checkalpha_2\} \subset \check R^+(w_1)=\{\checkalpha_{1},\ \checkalpha_1+\checkalpha_2,\ 2\checkalpha_1+ \checkalpha_2,\    3\checkalpha_1+2\checkalpha_2\},
\]
while for $w_2=s_2 s_1 s_2 s_1 s_2$ we have
\[
\check R^+_{w_2,B}=\{\checkalpha_{2}, \ 3\checkalpha_1+\checkalpha_2\} \subset R^+(w_2)=\{\checkalpha_{2},\ \checkalpha_1+\checkalpha_2,\ 3\checkalpha_1+ 2\checkalpha_2,\    2\checkalpha_1+\checkalpha_2, 3\checkalpha_1+\checkalpha_2\},
\]
These are the indecomposable elements of $R^+(w_1)$ and $R^+(w_2)$, respectively.

Note that we also have $w_1\in W^{P_1}$ and $w_2\in W^{P_2}$ where $P_i$ denotes the standard parabolic subgroup with $I^{P_i}=\{i\}$.
Applying the additional condition $w_is_\checketa\in W^{P_i}$ to $\checketa \in \check R^+_{w_i,B}$ gives
\[
\check R^+_{w_1,P_1}=\{3\checkalpha_1 + 2\checkalpha_2\},\qquad \check R^+_{w_2,P_2}=\{3\checkalpha_1 + \checkalpha_2\}.
\]
\end{example}
\begin{remark}
 By \cref{ex:G2}, in type $G_2$,  the integer $c$ arising in   \cref{d:indecomposableV2} may be $1,2$ or $3$. In types $B_n, C_n$ and $F_4$, the integer $c$ may be $1$ or $2$.
\end{remark}

We end this section with the following two conjectures motivated by \cref{p:indecomposableV2}.
\begin{conjecture}\label{c:RootConjecture}
Suppose $W$ is simply-laced, and $w\in W$ with $\check R^+(w)$ its associated inversion set. If $\checkalpha\in\check R^+(w)$ is decomposable, then there \textit{exists} a decomposition $\checkalpha=\checkmu+\checkmu'$ such that for two different reduced expressions $\mathbf i,\mathbf i'$ of $w$ we have $\checkmu<_{\mathbf i}\checkmu'$, but $\checkmu'<_{\mathbf i'}\checkmu$.
\end{conjecture}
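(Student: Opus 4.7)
The plan is to reduce \cref{c:RootConjecture} to a statement about placing an $A_2$-triangle at consecutive positions of some reflection ordering, and then to finish with a single $3$-term braid move. By \cref{l:decomposablesl}, in the simply-laced setting any decomposition of a decomposable $\checkalpha$ has the form $\checkalpha = \checkmu + \checkmu'$ with $\{\checkmu, \checkalpha, \checkmu'\}$ an $A_2$-triangle contained in $\check R^+(w)$. A direct calculation shows that the $3$-term braid move $s_\ell s_m s_\ell \leftrightarrow s_m s_\ell s_m$ applied at expression positions $j, j+1, j+2$ of a reduced expression of $w$ leaves its reflection ordering unchanged outside the three consecutive ordering positions $r-j-1, r-j, r-j+1$; at those positions it fixes the middle root and swaps the two outer roots (which, together with the middle one, form an $A_2$-triangle in $\check R^+(w)$). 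Consequently, the conjecture follows from the following \emph{Key Lemma}: for any such $A_2$-triangle in $\check R^+(w)$ there is a reduced expression $\mathbf i$ of $w$ whose reflection ordering places $\checkmu, \checkalpha, \checkmu'$ at three consecutive positions. Given the Key Lemma, applying the braid move above to $\mathbf i$ yields a second reduced expression $\mathbf i'$ in which $\checkmu$ and $\checkmu'$ are swapped while $\checkalpha$ and every other root of $\check R^+(w)$ stay put, delivering $\checkmu <_{\mathbf i} \checkmu'$ and $\checkmu' <_{\mathbf i'} \checkmu$.

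I would prove the Key Lemma by induction on $\ell(w)$. The base case $\ell(w) = 3$ is immediate, since then $\check R^+(w) = \{\checkmu, \checkalpha, \checkmu'\}$ and every reduced expression of $w$ automatically places these three roots consecutively. For the inductive step, start with any reduced expression $\mathbf i_0 = (i_1, \ldots, i_r)$ of $w$. If $\checkalpha_{i_r} \notin \{\checkmu, \checkalpha, \checkmu'\}$, then the bijection $s_{i_r}\colon \check R^+(w) \setminus \{\checkalpha_{i_r}\} \to \check R^+(ws_{i_r})$ carries the triangle to another $A_2$-triangle inside $\check R^+(ws_{i_r})$; applying the inductive hypothesis to $ws_{i_r}$ and appending $s_{i_r}$ to the resulting reduced expression recovers $\{\checkmu, \checkalpha, \checkmu'\}$ at three consecutive positions of the ordering for $\mathbf i$, because reattaching $s_{i_r}$ pre-composes each root of the ordering by $s_{i_r}$, which is an involution. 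Otherwise $\checkalpha_{i_r} \in \{\checkmu, \checkalpha, \checkmu'\}$; since $\checkalpha$ is a sum of two positive coroots and therefore not simple, this forces $\checkalpha_{i_r} = \checkmu$ (using $\checkmu <_{\mathbf i_0} \checkalpha <_{\mathbf i_0} \checkmu'$, after possibly renaming). The case reduces to the first whenever one can find an alternative reduced expression of $w$ ending in a simple reflection $s_j$ with $\checkalpha_j$ outside the triple.

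The main obstacle is the remaining subcase in which every simple coroot in $\check R^+(w)$ belongs to $\{\checkmu, \checkmu'\}$, equivalently $D_R(w) \subseteq \{j : \checkalpha_j \in \{\checkmu, \checkmu'\}\}$, so the right descent set has at most two elements. To handle it I would invoke Dyer's theory of reflection subgroups applied to $W_0 := \langle s_{\checkmu}, s_{\checkmu'}\rangle \cong S_3$: since all three positive coroots of $W_0$ lie in $\check R^+(w)$, the restriction of $w$ to the $A_2$-plane acts as the longest element $s_{\checkalpha}$ of $W_0$, and a length-additive coset decomposition of $w$ modulo $W_0$ should produce a reduced expression of $w$ factoring through a $3$-term braid triple $s_a s_b s_a$ whose three subtended roots in the reflection ordering are exactly $\checkmu, \checkalpha, \checkmu'$ at consecutive positions. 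This is the technical heart of the argument and is where the simply-laced hypothesis is crucial, entering both through the identification of $W_0$ as an $A_2$ Weyl group and through the uniqueness of decompositions guaranteed by \cref{l:decomposablesl}.
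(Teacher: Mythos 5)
This statement is presented in the paper as a \emph{conjecture}, with no proof: the authors only observe that it follows from \cref{c:CoxeterConj}, which is itself an open conjecture. So any complete proof would be new. Unfortunately, your argument does not close the gap, and in fact your central Key Lemma is already refuted by an example in the paper.

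The reduction step — replacing ``there exists a decomposition $\checkalpha=\checkmu+\checkmu'$ witnessing a reversal'' with ``there is a reduced expression placing $\checkmu,\checkalpha,\checkmu'$ at three consecutive positions, so a single braid move swaps the ends'' — is exactly the reduction the paper gives from \cref{c:RootConjecture} to \cref{c:CoxeterConj}. But your formulation of the Key Lemma quantifies over \emph{every} $A_2$-triangle $\{\checkmu,\checkalpha,\checkmu'\}$ in $\check R^+(w)$, which is strictly stronger than the conjecture (which only asserts that \emph{some} decomposition of $\checkalpha$ can be reversed). That universal version is false. In the paper's \cref{ex:Conj1} (type $D_5$, $w=w_0w_{P_2}$), the decomposable coroot $\check\theta=\checkalpha_2+\checketa^{\mathbf i}_{13}$ lies in an $A_2$-triangle with $\checkalpha_2$ and $\checketa^{\mathbf i}_{13}$, but $\checkalpha_2$ is the \emph{only} simple coroot in $\check R^+(w)$, hence sits first in every reflection ordering, so there is no reduced expression in which $\checkalpha_2$ and $\checketa^{\mathbf i}_{13}$ appear in the reverse order — a fortiori no reduced expression placing the three roots consecutively. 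The conjecture survives for this $\check\theta$ only because of a \emph{different} decomposition, $\check\theta=\checkalpha_{1234}+\checkalpha_{235}$, which your argument would not single out.

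This failure occurs precisely in the ``remaining subcase'' you flag as the technical heart — when all descents of $w$ are forced to lie in the triple. Even granting a weakened (existential) Key Lemma, which is essentially \cref{c:CoxeterConj}, your treatment of that subcase is a sketch rather than a proof: the appeal to Dyer's theory of reflection subgroups does not by itself produce a length-additive factorization of $w$ through a braid triple, since $W_0=\langle s_{\checkmu},s_{\checkmu'}\rangle$ is generally not a standard parabolic, its canonical generators need not be simple reflections of $W$, and it is not clear why a coset decomposition modulo $W_0$ should translate into a reduced word for $w$ (in the simple generators of $W$) ending in a suitable $s_as_bs_a$. The remaining parts of your induction (the base case $\ell(w)=3$, and peeling off a final simple reflection $s_{i_r}$ whose coroot is outside the triple) are fine. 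To make progress you would need to prove \cref{c:CoxeterConj} itself, and in doing so correctly handle the choice of \emph{which} decomposition of $\checkalpha$ to chase through the induction, since the example above shows that not every decomposition can be made to work.
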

We note that the statement does not necessarily hold for every decomposition of $\checkalpha$, but the conjecture says it holds for at least one, see \cref{ex:Conj1} below.
Using \cref{p:indecomposableV2} the above conjecture can be reduced to the following conjecture that is stated directly in terms of the Coxeter group structure of $W$.
\begin{conjecture}\label{c:CoxeterConj}
Let $w\in W$ and suppose $t\in W$ is a reflection such that $\ell(wt)<\ell(w)-1$. Then there exists a reduced expression $w=s_{i_1}\dotsc s_{i_{\ell-1}}s_{i_\ell}s_{i_{\ell+1}}\dotsc s_{i_r}$ with $i_{\ell-1}=i_{\ell+1}=i$ such that
\[
wt=s_{i_1}\dotsc s_{i}\hat {s}_{i_\ell}s_{i}\dotsc s_{i_r},
\]
where the factor $s_{i_\ell}$ has been removed.
\end{conjecture}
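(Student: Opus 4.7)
My plan is to prove the conjecture by induction on $\ell(w)$, closely mirroring the structure of the proof of \cref{p:indecomposableV2}. For the base case $\ell(w)=3$: since $\ell(wt)\le 0$ and $t$ is a reflection, we must have $wt=e$, so $w=t$ is a length-$3$ reflection. Any such reflection can be written as $s_a s_b s_a$ for simple reflections $s_a, s_b$ (because a reflection of length $2\ell(u)+1=3$ arises by conjugating a simple reflection by an element $u$ of length $1$), yielding directly the desired palindromic sandwich form; removing the middle letter gives $s_a s_a=e=wt$.

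For the inductive step ($\ell(w)\ge 4$), I would imitate the case split used in the proof of \cref{p:indecomposableV2}. Fix a reduced expression $\mathbf{i}=(i_1,\ldots,i_r)$ of $w$ and set $v=s_{i_2}\cdots s_{i_r}$; since $\ell(wt)\le\ell(w)-3$, the coroot $\checketa$ necessarily lies in $\check R^+(v)$. If we can arrange $\ell(v s_\checketa)\le\ell(v)-3$ by our choice of $\mathbf{i}$, then induction applied to $(v,s_\checketa)$ yields a reduced expression of $v$ with the desired sandwich, and prepending $s_{i_1}$ transfers it to $w$ (since $s_{i_1}\cdot v s_\checketa = wt$). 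Letting $k=\ell(w)-\ell(wt)$, this strategy succeeds whenever $k\ge 5$ (any $i_1\in D_L(w)$ works, because even if $i_1\notin D_L(wt)$ one only loses $2$ from $k$) or when $k=3$ and some $i_1\in D_L(w)\cap D_L(wt)$ exists. A symmetric right-descent argument --- passing to $(w s_j,\, s_j t s_j)$ for $j\in D_R(w)\cap D_R(wt)$ and appending $s_j$ at the end --- handles any common right descent.

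The only residual case is $k=3$ with both descent intersections empty. Here I would attempt a direct construction using a palindromic reduced expression $s_\checketa=s_{j_1}\cdots s_{j_m}\cdots s_{j_1}$ of length $2m-1$. When $\ell(s_\checketa)=3$ the word $w=u\cdot s_a s_b s_a$ (with $u=wt$) is reduced without cancellation, giving the sandwich at the tail; removing $s_b$ leaves $u$. When $\ell(s_\checketa)\ge 5$ there must be $c\ge 1$ cancellations at the junction $u\cdot s_\checketa$, and so the standard length formula forces $D_R(u)\cap D_L(s_\checketa)\ne\emptyset$; choosing $j$ in this intersection together with a palindromic expression of $s_\checketa$ whose outer letter is $s_j$, and carefully tracking the cancellations against $u$, should leave the innermost three-letter palindrome $s_{j_{m-1}}s_{j_m}s_{j_{m-1}}$ intact as the required sandwich. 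I expect the main obstacle to be precisely this last step: verifying that every left descent of $s_\checketa$ is realized as the outer letter of some palindromic reduced expression of $s_\checketa$, and controlling the iterated cancellations so that the inner sandwich survives. A cleaner alternative would be to first prove that this residual case never occurs --- i.e., that $D_L(w)\cap D_L(wt)\ne\emptyset$ or $D_R(w)\cap D_R(wt)\ne\emptyset$ whenever $k\ge 3$ and $wt\ne e$ --- which would bypass the direct construction entirely.
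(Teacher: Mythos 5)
The statement you are attempting to prove is labelled in the paper as \cref{c:CoxeterConj} and is presented as an open \emph{conjecture}: the authors offer no proof, only the remark that it would imply \cref{c:RootConjecture}. So there is no proof in the paper to compare your attempt against; I can only assess the attempt on its own terms.

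The overall structure of your induction is sound, and the reduction by a left or right descent common to $w$ and $wt$ is the right move for the bulk of cases. A small slip in the base case: when $\ell(w)=3$ the hypothesis gives $\ell(wt)\le 1$, not $\ell(wt)\le 0$, and you then need to invoke parity (namely $\ell(wt)\equiv\ell(w)+\ell(t)\equiv 0\pmod 2$) to conclude $wt=e$; you clearly know this, but it should be stated. Your bookkeeping in the inductive step is correct: for $k:=\ell(w)-\ell(wt)\ge 5$ any $i_1\in D_L(w)$ works because at worst $k$ drops to $k-2\ge 3$, and for $k=3$ the truncation preserves the hypothesis precisely when $i_1\in D_L(w)\cap D_L(wt)$ (or, by the mirror argument, when there is a common right descent). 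One also needs to check, as you implicitly do, that $\check\eta$ survives into $\check R^+(v)$ and that prepending/appending the simple letter genuinely carries the sandwich from $v$ back to $w$; both are fine.

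The genuine gap, which you yourself flag, is the residual case $k=3$ with $D_L(w)\cap D_L(wt)=D_R(w)\cap D_R(wt)=\emptyset$. Neither of your two ways out is established. The ``direct construction'' sketch has two unresolved steps: (i) that every $j\in D_L(t)$ is the outer letter of some palindromic reduced expression of $t$ (this part is in fact true, since $j\in D_L(t)=D_R(t)$ with $t\ne s_j$ gives $t=s_j t' s_j$ for the shorter reflection $t'=s_j t s_j$, and one can iterate), but (ii) the cancellation bookkeeping needed to show that the innermost three-letter palindrome survives after absorbing $u=wt$ on the left is precisely where the difficulty lies, and you do not carry it out. A word of warning: not every reduced expression of a reflection is palindromic (e.g.\ $s_3s_1s_2s_3s_1$ for $(1\,4)$ in $S_4$), so the cancellations can produce non-palindromic words and the inner sandwich is not automatically preserved. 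The alternative route --- showing that $k=3$ with both descent intersections empty cannot occur --- is also left unproven; I was unable to verify it in general, and it is not obviously a consequence of the standard lifting/exchange properties. Until one of these two is filled in, the argument does not constitute a proof, and the statement remains a conjecture (as the paper itself treats it).
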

Note that this implies the first conjecture as follows. In the situation of \cref{c:RootConjecture}, if $\checkalpha\in \check R^+(w)$ is decomposable then $t=s_{\checkalpha}$ has the property $\ell(wt)<\ell(w)-1$ by \cref{p:indecomposableV2}. Thus \cref{c:CoxeterConj} implies the existence of a reduced expression $\mathbf i=(i_1,\dotsc,i,i_\ell,i\dotsc i_r)$ where $\checkalpha=s_{i_r}\dotsc s_{i_{\ell+2}}s_{i}(\checkalpha_{j})$, writing $j=i_\ell$. This gives a substring in the associated reflection ordering that is of the form $\checkmu,\checkalpha,\checkmu'$ with $\checkalpha=\checkmu+\checkmu'$, which is reversed in the reflection ordering of $\mathbf i'=(i_1,\dotsc j,i,j,\dotsc i_r)$, obtained from $\mathbf i$ by applying a single braid relation.

\begin{example}\label{ex:Conj1}.
 Consider the minimal coset representative $w^{P_2}=w_0w_{P_{2}}$ of $w_0$ in $W^{P_2}$, where $P_2$ is the maximal parabolic subgroup with  $I^{P_2}=\{2\}$. Note that $\ell(w_0)=20$ and $\ell(w_{P_{2}})=7$, so that $\ell(w)=13$. This element is given in terms of an explicit reduced expression by
 \[\dynkin [label,label macro/.code={s_{\drlap{#1}}},
 edge length=.75cm]D5\ , \qquad
w=w^{P_2}=s_2s_3s_4s_1s_2s_3s_5s_3s_4s_2 s_3 s_1s_2.
\]
In the associated ordering of $\check R^+(w)$ the first element is $\checkalpha_2$, and the last is
\[
\checketa^{\mathbf i}_{r=13}= s_2s_1s_3s_2s_4s_3s_5s_3s_2s_1s_4s_3(\checkalpha_2)=\checkalpha_1+\checkalpha_2+2\checkalpha_3+\checkalpha_4+\checkalpha_5,
\]
where $\mathbf i=(2,3,4,1,2,3,5,3,4,2,3,1,2)$. The sum of the first and last element of $\check R^+(w)$ is still a positive coroot, and thus again in $\check R^+(w)$. Namely,
\[
\checketa^{\mathbf i}_{13}+\checkalpha_2=\checkalpha_1+2\checkalpha_2+2\checkalpha_3+\checkalpha_4+\checkalpha_5=\check\theta,
\]
the highest coroot for type $D_5$. We therefore have in $\check R^+(w)$ that
\[
\checkalpha_2\ <_{\mathbf i}\
\checketa^{\mathbf i}_{13}+\checkalpha_2\ <_{\mathbf i}\ \checketa^{\mathbf i}_{13}.
\]
But since $\checkalpha_2$ is the only simple coroot in $\check R^+(w)$ and every ordering of $\check R^+(w)$ coming from a reduced expression must start with $\checkalpha_2$, we see that there is no reduced expression of $w$ for which these  coroots appear in the opposite order.

However, the decomposable element $\check\theta$ can also be described as $\check\theta=\checketa^{\mathbf i}_{7}=s_2s_1s_3s_2s_4s_3(\checkalpha_5)$. There are two other decompositions of $\check\theta$ in $\check R^+(w)$,
\begin{eqnarray}
\check\theta &=&
\checketa^{\mathbf i}_{3}+\checketa^{\mathbf i}_{11}=\checkalpha_{23}+\checkalpha_{12345},\\
\check\theta &=&
\checketa^{\mathbf i}_{6}+\checketa^{\mathbf i}_8=\checkalpha_{1234}+\checkalpha_{235}.
\end{eqnarray}
For the second one there is a reduced expression $\mathbf j$,
\[
w=s_2s_3s_4s_1s_2\,\boxed{s_5s_3s_5}\,s_4s_2 s_3 s_1s_2,
\]
where the summands are in the reverse order.
 \end{example}

\section{The Picard group of $\XX_{w,P}$}\label{s:Picard}
In this section, we let  $w\in W^P$, and provide precise generators of the Picard group of $\XX_{w, P}$ in \cref{p:PicardGroup}.
\begin{defn}\label{d:RichardsonDivisors}
 Recall that $I=I_P\sqcup I^P$ with $I_P=\{k\in I \mid \dot s_k\in P\}$. Moreover, $I^B=I$ for the case $P=B$. Associate to $w$ the sets
\begin{eqnarray*}
 I_w^B&:=&I_w\ \,= \ \, \{i\in I\mid s_i<w\},
 \\
 I^P_w&:=&I_w\cap I^P=\{k\in I^P\mid s_k<w\},
\end{eqnarray*}
and associate to $i\in I_w^B$ the Weil divisor $\XX^{s_i}_{w,P}$ in $\XX_{w,P}$. If $k\in I_w^P$ then $\XX^{s_k}_{w,P}$ is in fact a Richardson divisor, see \cref{r:Richardson}, and we will also use the notation $D_k:=\XX^{s_k}_{w,P}$ for it, as these divisors will play a special role.
\end{defn}
The proposition below is \cref{mt:PicardGroup} from the introduction.
\begin{prop}\label{p:PicardGroup}
The divisor $\XX^{s_i}_{w,P}$ in $\XX_{w,P}$ associated to $i\in I_w$ is Cartier if $i\in I^P_w$. Writing  $D_k=\XX^{s_k}_{w,P}$ for these Cartier divisors associated to $k\in I^P_w$, we have that the Picard group $\operatorname{Pic}(\XX_{w,P})$ is freely generated by the line bundles $\mathcal O(D_k)$.
\end{prop}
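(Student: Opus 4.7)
The plan is to identify the line bundles $\mathcal{O}(D_k)$ with the restrictions of the Schubert divisor line bundles $\mathcal{O}_{G/P}(\XX^{s_k,P})$ on $G/P$, and then combine this with Mathieu's theorem \cite{Mat88} asserting that every line bundle on $\XX_{w,P}$ restricts from one on $G/P$, together with the standard intersection pairing with one-dimensional Schubert classes. Under the well-known identification $\mathcal{O}_{G/P}(\XX^{s_k,P})\cong\mathcal{L}_{\omega_k}$, this simultaneously realizes $\mathcal{O}(D_k)$ as the restriction of the Borel--Weil line bundle $\mathcal{L}_{\omega_k}$ on $G/P$.

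For the Cartier claim, fix $k\in I^P_w$. The Schubert divisor $\XX^{s_k,P}$ is Cartier on the smooth variety $G/P$, and by \cref{r:Richardson} its intersection with $\XX_{w,P}$ is proper and agrees scheme-theoretically with the projected Richardson divisor $D_k=\XX^{s_k}_{w,P}$. The restriction of the Cartier divisor $\XX^{s_k,P}$ to $\XX_{w,P}$ is therefore exactly $D_k$, giving an isomorphism $\mathcal{O}_{G/P}(\XX^{s_k,P})|_{\XX_{w,P}}\cong\mathcal{O}_{\XX_{w,P}}(D_k)$, and in particular $D_k$ is Cartier.

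To prove free generation, the identification $\operatorname{Pic}(G/P)=\bigoplus_{k\in I^P}\ZZ\,[\mathcal{L}_{\omega_k}]$ together with Mathieu's theorem yields a surjection $\bigoplus_{k\in I^P}\ZZ\,[\mathcal{L}_{\omega_k}]\twoheadrightarrow\operatorname{Pic}(\XX_{w,P})$. For each $k\in I^P_w$ the curve $\XX_{s_k,P}\cong\mathbb{P}^1$ sits inside $\XX_{w,P}$, and the classical degree formula on $G/P$ gives $\mathcal{L}_{\omega_i}\cdot\XX_{s_k,P}=\delta_{ik}$ for all $i,k\in I^P$. Pairing with these curves defines a homomorphism $\operatorname{Pic}(\XX_{w,P})\to\ZZ^{I^P_w}$ whose composition with the Mathieu surjection is the coordinate projection $\ZZ^{I^P}\twoheadrightarrow\ZZ^{I^P_w}$. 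Since $\XX_{w,P}$ is normal, $\operatorname{Pic}(\XX_{w,P})$ also injects into the Weil divisor class group, which is free abelian with basis of Schubert divisors. These two observations together force $\operatorname{Pic}(\XX_{w,P})$ to be free abelian of rank exactly $|I^P_w|$ with basis $\{[\mathcal{O}(D_k)]\mid k\in I^P_w\}$, while simultaneously showing that $\mathcal{L}_{\omega_i}|_{\XX_{w,P}}$ is trivial for every $i\in I^P\setminus I^P_w$.

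The main subtlety I anticipate is establishing the clean scheme-theoretic identification $\XX^{s_k,P}\cap\XX_{w,P}=D_k$ (not merely as sets, and with multiplicity one) that underlies the Cartier claim. This relies on the content of \cref{r:Richardson}: both that the intersection has the expected dimension $\ell(w)-1$ and that it is reduced and irreducible, each of which uses the hypothesis $k\in I^P_w$ in an essential way. Once that identification is in hand, Mathieu's surjection, the pairing $\mathcal{L}_{\omega_i}\cdot\XX_{s_k,P}=\delta_{ik}$ on $G/P$, and the injection $\operatorname{Pic}\hookrightarrow$ Weil class group for normal varieties are all standard, and the remainder of the argument reduces to a clean rank count.
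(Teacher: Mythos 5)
Your argument for the Cartier claim genuinely differs from the paper's. The paper establishes $\mathcal O_{\XX_{w,P}}(D_k)\cong\mathcal L_{\omega_k}|_{\XX_{w,P}}$ via a Chevalley-formula computation in $H_*(G/P)$, matching $\iota_*(\div(\mathcal L_{\omega_k}|_{\XX_{w,P}}))$ with $(\pi_P)_*[\XX^{s_k}_{w,B}]$. You instead invoke \cref{r:Richardson} to identify $D_k$ with the scheme-theoretic restriction of the Cartier divisor $\XX^{s_k,P}$ to $\XX_{w,P}$. This is cleaner and more geometric; it does quietly lean on the reducedness of Richardson varieties in $G/P$ (a Frobenius-splitting fact, not stated explicitly in \cref{r:Richardson}, which only asserts equality of reduced irreducible varieties of the same dimension), but that is standard and you correctly flag the scheme-theoretic identification as the subtle point.

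The free-generation argument, however, has a gap. You have three facts: (a) Mathieu's surjection $\ZZ^{I^P}\twoheadrightarrow\Pic(\XX_{w,P})$, (b) the curve pairing $\Pic(\XX_{w,P})\to\ZZ^{I^P_w}$ whose composition with (a) is the coordinate projection, and (c) the injection $\Pic(\XX_{w,P})\hookrightarrow\ZZ^{\RpwP}$. From (a) and (b) you get $\ker(\operatorname{Res})\subseteq\ZZ^{I^P\setminus I^P_w}$ and that the pairing is surjective, so $\operatorname{rank}\Pic(\XX_{w,P})\ge|I^P_w|$; from (c) you get that $\Pic(\XX_{w,P})$ is free. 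None of this bounds the rank above by $|I^P_w|$: a priori $\mathcal L_{\omega_i}|_{\XX_{w,P}}$ could be nontrivial for some $i\in I^P\setminus I^P_w$, in which case $\Pic(\XX_{w,P})$ would have rank strictly larger than $|I^P_w|$ and the $\mathcal O(D_k)$, $k\in I^P_w$, would not generate. Your sentence ``these two observations together force $\dotsc$ while simultaneously showing that $\mathcal L_{\omega_i}|_{\XX_{w,P}}$ is trivial for every $i\in I^P\setminus I^P_w$'' asserts precisely what needs a separate argument. Your identification $\mathcal O(D_k)\cong\mathcal L_{\omega_k}|_{\XX_{w,P}}$ via \cref{r:Richardson} only applies for $k\in I^P_w$, so it does not supply this. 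The paper closes this exact gap by observing that $\div(\mathcal L_{\omega_i}|_{\XX_{w,P}})=\sum_{\checketa\in\RpwP}\langle\omega_i,\checketa\rangle[\XX_{ws_\checketa,P}]$ vanishes when $i$ is not in the support of $w$ (since every $\checketa\in\check R^+(w)$ is supported on $I_w$), and then uses the injectivity of $\div$; you need either this Chevalley computation or the geometric observation that $\XX_{w,P}$ already lies in the flag variety of the Levi subgroup generated by $\{\alpha_j\mid j\in I_w\}$, on which $\mathcal L_{\omega_i}$ restricts trivially for $i\notin I_w$.
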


\begin{proof}
We recall that restriction of line bundles
\[
\operatorname{Res}:\Pic(G/P)\to \Pic(\XX_{w,P})
\]
is a surjection, by  \cite[Proposition 6]{Mat88}.
We first show that the divisors $D_k$ for $k\in I_w^P$ are Cartier in $\XX_{w,P}$ and the line bundles $\mathcal O_{\XX_{w,P}}(D_k)$ generate the Picard group.

Recall that $P\supseteq B$ is an `upper-triangular'  parabolic subgroup of $G$.
Natural generators of the Picard group of $G/P$ can be constructed in a standard way using Borel-Weil, see \cite{Brion}. Explicitly, in our conventions, for each fundamental weight $\omega_k$ with $k\in I^P$ we have a highest weight representation $V_{\omega_k}$ and a map
\[
G/P\overset{\phi_k}\longrightarrow \mathbb P(V_{\omega_k})
\]
sending $gP$ to the line $\langle g\cdot v^+_{\omega_k}\rangle_\CC$ spanned by a highest weight vector. The line bundles $\mathcal L_{\omega_k} :=\phi_k^*(\mathcal O(1))$ generate
the Picard group $\Pic(G/P)\cong \ZZ^{I^P}$.  Namely, the hyperplane in $\mathbb P(V_{\omega_k})$ defined by the vanishing of the highest weight component pulls back to the divisor
$
\XX^{s_k,P}$,
and we see that  $\div(\mathcal L_{\omega_i})=[\XX^{s_k,P}]$. The divisor class $[\XX^{s_k,P}]$ for $k\in I^P$ corresponds  via Poincar\'e duality to the Schubert cohomology class also denoted $\sigma_{G/P}^{s_k}\in H^2(G/P,\ZZ)$. In other words, $c_1(\mathcal L_{\omega_k})=\sigma^{s_k}_{G/P}$ describes the standard isomorphism $\Pic(G/P)\cong H^2(G/P,\ZZ)$.  If we now restrict $\mathcal L_{\omega_k}$ to the Schubert variety $\XX_{w,P}$, the associated divisor class $\div(\mathcal L_{\omega_k}|_{\XX_{w,P}})$ can be computed using intersection theory in $G/P$ via
\begin{equation}\label{e:divofL}
\iota_*(\div(\mathcal L_{\omega_k}|_{\XX_{w,P}}))=[\XX_{w,P}]\cdot \div(\mathcal L_{\omega_k})=[\XX_{w,P}]\cdot [\XX^{s_k,P}]=\sum_{\checketa\in \RpwP} \langle \omega_k,\checketa\rangle [\XX_{ws_\checketa ,P}],
\end{equation}
where $\iota:\XX_{w,P}\hookrightarrow G/P$ denotes the inclusion of the Schubert variety, and the final equality is by the Chevalley formula in Schubert calculus of $G/P$ \cite{Chevalley}, see also \cite[Lemma~8.1]{FultonWoodward}.
On the other hand, we have  $D_{k}=\pi_P(\XX^{s_k}_{w,B})$, and in $G/B$,
\begin{equation}\label{e:DiviaSchubCalc}
[\XX^{s_k}_{w,B}]_{G/B}= [\XX^{s_k,B}]_{G/B}\cdot [\XX_{w,B}]_{G/B}=
\sum_{\checketa\in \RpwB} \langle \omega_k,\checketa\rangle [\XX_{ws_\checketa ,B}]_{G/B} .
\end{equation}
Pushing forward via $\pi_P:G/ B\to G/P$ and comparing \eqref{e:DiviaSchubCalc} with \eqref{e:divofL}, we find that
\[
[D_k]_{G/P}=(\pi_P)_*([\XX^{s_k}_{w,B}]_{G/B})=\iota_*(\div(\mathcal L_{\omega_k}|_{\XX_{w,P}})),
\]
where the extra summands in \eqref{e:DiviaSchubCalc} disappear in the pushforward for dimension reasons. Thus, as a divisor in $\XX_{w,P}$ we have that $D_k$ is Cartier and $\mathcal O_{\XX_{w,P}}(D_k)=\mathcal L_{\omega_k}|_{\XX_{w,P}}$.

Since the line bundles $\mathcal L_{\omega_k}$ for $k\in I^P$ generate $\Pic(G/P)$, and $\operatorname{Res}$ is a surjection, it follows that the $\operatorname{Res}(\mathcal L_{\omega_k})=\mathcal O_{\XX_{w,P}}(D_k)$, generate $\Pic(\XX_{w,P})$. Of course, if $s_k\not <w$, then the divisor class $[D_{k}]_{\XX_{w,P}}=0$ and $\mathcal L_{\omega_k}|_{\XX_{w,P}}$ is trivial. Thus the line bundles $\mathcal O_{\XX_{w,P}}(D_k)$ with $k\in I^P_w$ suffice to generate the Picard group of $\XX_{w,P}$.

We can check directly that $\Pic(\XX_{w,P})$ is freely generated by the  $\mathcal O_{\XX_{w,P}}(D_k)$, equivalently, that there are no non-trivial linear relations between the $[D_k]$ as follows. Suppose we have that $D=\sum_{k\in I^P_w} c_k D_k$ is linearly equivalent to $0$. Let $\lambda=\sum_{k\in I^P}c_k\, \omega_k$ and consider $\mathcal L_{\lambda}\in \Pic(G/P)$. Then since $\mathcal O_{\XX_{w,P}}(D)=\operatorname{Res}(\mathcal L_{\lambda})$ we have that $\mathcal L_{\lambda}|_{\XX_{w,P}}$ is a trivial line bundle.
Now for any $s_k<w$ we can restrict $\mathcal L_{\lambda}$ further to the $1$-dimensional Schubert variety $\XX_{s_k,P}\subseteq \XX_{w,P}$. This restriction is still trivial. But the line bundle $\mathcal L_{\lambda}|_{\XX_{s_k,P}}$  on the $\XX_{s_k,P}$ has degree computed by $\langle\lambda, \checkalpha_k\rangle=c_k$, as in \cite[Lemma 3.2]{FultonWoodward}. Thus we obtain that $c_k=0$ and the linear relation is trivial.
\end{proof}

\begin{remark}\label{r:PicDiv}
We have the following commutative diagram,
\[\begin{tikzcd}
	{\Pic(G/P)} & {\Pic(\XX_{w,P})} \\
	{H^2(G/P,\ZZ)} & {H^2(\XX_{w,P},\ZZ)} && {H_{2\ell(w)-2}(\XX_{w,P},\ZZ).}
	\arrow["{\operatorname{Res}}", two heads, from=1-1, to=1-2]
	\arrow["{c_1}"', from=1-1, to=2-1]
	\arrow["{c_1}"', from=1-2, to=2-2]
	\arrow["\div",   from=1-2, to=2-4]
	\arrow["{i^*}", two heads, from=2-1, to=2-2]
	\arrow["{\cap [\XX_{w,P}]}", from=2-2, to=2-4]
\end{tikzcd}\]
Here the first Chern class maps $c_1$ are both isomorphisms, using \cref{p:PicardGroup} for the right-hand one. We refer to  \cite[Section 1.1 C]{Laz} for the $c_1$ map for singular Schubert varieties.
The divisor class map $\div$ is injective, and thus so is the cap-product map
\begin{equation}\label{e:cap}
\begin{array}{ccc}
{H^2(\XX_{w,P},\ZZ)} &\xrightarrow{\cap [\XX_{w,P}]}& H_{2\ell(w)-2}(\XX_{w,P},\ZZ).\\
\end{array}
\end{equation}
We may identify $\Pic(\XX_{w,P})$ with its image in $H_{2\ell(w)-2}(\XX_{w,P},\ZZ)$, which by \cref{p:PicardGroup} is precisely the free span of the Cartier divisor classes $[D_k]$, where $k\in I^P_w$. Moreover, we have

\begin{equation}\label{e:Dihom}
[D_k]=
\sum_{\checketa\in \RpwP} \langle \omega_k,\checketa\rangle [\XX_{ws_\checketa,P}].
\end{equation}
From this perspective, \cref{p:PicardGroup} implies that the matrix $(\langle \omega_k,\checketa\rangle)_{\checketa,k}$ with $\checketa\in \RpwP$ and $k\in I^P_w$ has full rank equal to $|I^P_{w}|$. We note that this is a non-trivial statement in terms of Weyl group combinatorics. Moreover, the span $\langle [D_k]_{\XX_{w,P}}\rangle_\ZZ$ as a sublattice inside $H_{2\ell(w)-2}(\XX_{w,P},\ZZ)$ (the Cartier divisor classes inside the group of Weil divisor classes) encodes more subtle  information about the singularities of the Schubert variety $\XX_{w,P}$. We make use of this in the following section.
\end{remark}

\section{Factoriality of Schubert varieties}\label{s:factorial}
Let us begin by defining a pair of lattices which will play a central role in analysing factoriality.
Recall that $H^2(\XX_{w,P},\ZZ)$ can be identified with the Picard group of $\XX_{w,P}$, as seen above, and we have a basis given by the Chern classes $c_1(\mathcal L_{\omega_k}|\XX_{w,P})$ of the generators of $\Pic(\XX_{w,P})$. This leads us to identify $H^2(\XX_{w,P},\ZZ)$ with the sublattice of the weight lattice of $G$ spanned by the fundamental weights $\omega_k$ with $k\in I^P_w$.

\begin{defn}\label{d:QPw} Let $w\in W^P$. We define a dual pair of $\ZZ$-lattices in terms of the root datum of $G$, which are naturally identified with second cohomology and homology of $\XX_{w,P}$ as shown.
\begin{equation*}
\begin{array}{cclcc}\label{e:QwP}
\Pcal_{w,P} &:=&\langle\ \omega_{k}\mid k\in I_w^P\ \rangle_\ZZ, &\hat=&H^2(\XX_{w,P},\ZZ),\\
\Qcal_{w,P} &:= &\langle\, \checkalpha_i\mid i\in I_w\, \rangle_\ZZ\ /\ \langle \checkalpha_j\mid j\in I_P\cap I_w\rangle_\ZZ &\hat=& H_2(\XX_{w,P},\ZZ).
\end{array}
\end{equation*}
Here $\Pcal_{w,P}$ is a sublattice of the weight lattice of $G$, and $\Qcal_{w,P}$ is a subquotient of the coroot lattice of $G$. The dual pairing is inherited from the dual pairing between the weight and coroot  lattices. We also note that $\RpwP$ naturally maps to $\Qcal_{w,P}$, and we do not distinguish notationally between an element or $\RpwP$ and its image in $\Qcal_{w,P}$.
\end{defn}
We can now state the main results of this section. The following proposition relates properties of the subset $\RpwP$ of $\check R^+(w)$ to the factoriality of $\XX_{w,P}$, part (3) of which is \textbf{\cref{mt:factorialgen}}.

\begin{prop} \label{p:Qcal}
Let $w\in W^P$. The associated subset $\RpwP$  of $w$ 0has the following properties.
\begin{enumerate}
\item The $|\RpwP|\times |I^P_{w}|$ matrix $(\langle\omega_i,\checketa\rangle)_{\checketa, k}$, has rank $|I^P_w|$, and the set $\RpwP$ generates a sublattice of full rank inside $\Qcal_{w,P}$. 
\item The Schubert variety $\XX_{w,P}$ is $\mathbb Q$-factorial if and only if the set $\RpwP$ has cardinality $|I^P_w|$. Equivalently, this is if $\RpwP$ defines a $\QQ$-basis of  $\Qcal_{w,P}\otimes_\ZZ\QQ$.
\item The Schubert variety $\XX_{w,P}$ is factorial if and only if $\RpwP$ is a $\ZZ$-basis of $\Qcal_{w,P}$.
\end{enumerate}
\end{prop}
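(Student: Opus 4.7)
The plan is to introduce a single matrix $M = \big(\langle \omega_k, \checketa \rangle\big)_{\checketa \in \RpwP,\, k \in I_w^P}$ that simultaneously encodes two inclusions: the inclusion of the Picard group into the Weil divisor class group, and the inclusion of $\langle\RpwP\rangle_\ZZ$ into $\Qcal_{w,P}$. All three statements then fall out by reading off properties of $M$.

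First I would set up the two interpretations of $M$. The map $\checketa \mapsto ws_\checketa$ identifies $\RpwP$ with the set $\{x \in W^P \mid x \prec w\}$ of Schubert divisor indices, so the Weil divisor class group $H_{2\ell(w)-2}(\XX_{w,P},\ZZ)$ has rank $|\RpwP|$ with basis $\{[\XX_{ws_\checketa,P}]\}_{\checketa \in \RpwP}$, while $\Pic(\XX_{w,P})$ has rank $|I_w^P|$ with basis $\{[D_k]\}_{k \in I_w^P}$ by \cref{p:PicardGroup}. Formula~\eqref{e:Dihom} then shows the natural injection $\Pic \hookrightarrow H_{2\ell(w)-2}$ is given in these bases by the columns of $M$. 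Dually, every $\checketa \in \check R^+(w)$ is supported on simple coroots indexed by $I_w$ (by an easy induction on reduced expressions of $w$, using $s_{i_j}(\checkalpha_i)=\checkalpha_i-\langle\alpha_{i_j},\checkalpha_i\rangle\checkalpha_{i_j}$), so $\Qcal_{w,P}$ has canonical $\ZZ$-basis $\{\bar{\checkalpha}_k \mid k \in I_w^P\}$, and the image of $\checketa$ in this basis has coordinates $\langle \omega_k, \checketa\rangle$ for $k \in I_w^P$, i.e.\ the $\checketa$-th row of $M$.

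With both interpretations in hand, the three parts are immediate. For part~(1), injectivity of $\Pic \hookrightarrow H_{2\ell(w)-2}$ (see \cref{r:PicDiv}) forces $M$ to have full column rank $|I_w^P|$; the row interpretation then shows that $\langle\RpwP\rangle_\ZZ \subseteq \Qcal_{w,P}$ has rank equal to this column rank, and hence is of full rank. For part~(2), $\QQ$-factoriality is the equality $\Pic\otimes \QQ = H_{2\ell(w)-2}\otimes \QQ$, which given injectivity is equivalent to the rank equality $|\RpwP|=|I_w^P|$; combined with part~(1), this says exactly that $\RpwP$ is a $\QQ$-basis of $\Qcal_{w,P}\otimes\QQ$. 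For part~(3), factoriality is the integral equality $\Pic = H_{2\ell(w)-2}$, which by the column interpretation is exactly that $M$ is square and unimodular; by the row interpretation this is precisely the statement that $\RpwP$ is a $\ZZ$-basis of $\Qcal_{w,P}$.

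The only non-formal piece is the support claim $\check R^+(w) \subseteq \langle\checkalpha_i \mid i\in I_w\rangle_\ZZ$, which is the bridge between the column and row interpretations of $M$; once this is in place, everything else is a bookkeeping translation between the two sides. The real substance of the proposition is already in \cref{p:PicardGroup} and \cref{r:PicDiv}, which together guarantee that $M$ has full column rank; the present proposition just extracts the precise factoriality and $\QQ$-factoriality criteria from the shape of $M$.
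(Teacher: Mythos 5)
Your proof is correct, and for parts (1) and (3) it follows essentially the same route as the paper: the Picard group inclusion from \cref{p:PicardGroup} and \cref{r:PicDiv} gives full column rank of the matrix $M$, and factoriality is read off as $M$ being square and unimodular. The genuine difference is in part (2). The paper invokes Park and Popa's Theorem~A to characterize $\QQ$-factoriality via the Betti-number equality $b_2 = b_{2\ell(w)-2}$, which applies to any normal variety with rational singularities; you instead read $\QQ$-factoriality off directly from the definition as surjectivity of $\Pic(\XX_{w,P})\otimes\QQ \to \operatorname{Cl}(\XX_{w,P})\otimes\QQ$, which together with the already-established injectivity forces the rank equality $|I^P_w|=|\RpwP|$. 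Your route is more self-contained, avoiding the external citation, and relies only on the standard facts that $\Pic \hookrightarrow \operatorname{Cl}$ for a normal complete variety and that $\operatorname{Cl}(\XX_{w,P})\cong H_{2\ell(w)-2}(\XX_{w,P},\ZZ)$ is free (so no torsion issues when passing to $\QQ$). The paper presumably highlights the Park--Popa result because it is thematically important elsewhere (see \cref{r:Qfactorial}); for the purposes of this proposition alone, your more elementary argument suffices and is arguably cleaner.
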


Then we will prove the theorem below concerning the simply-laced case.

\begin{theorem}\label{t:Factorialsl}
Suppose $G$ is of simply-laced type and $w\in W^P$. Then we have the following.
\begin{enumerate}
\item
$\RpwP$ generates $\Qcal_{w,P}$.
\item
If $D$ is a Weil divisor in $\XX_{w,P}$ and $D$ is $\QQ$-Cartier, then $D$ is in fact Cartier.
\item
$\XX_{w,P}$ is factorial if and only if $|\RpwP|=|I^P_w|$.
\end{enumerate}
\end{theorem}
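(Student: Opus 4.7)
The plan is to treat part (1) as the technical core and deduce (2) and (3) from it by essentially formal arguments. For the case $P=B$, I would first verify that $\check R^+(w)$ itself $\ZZ$-spans $\Qcal_{w,B}=\langle\checkalpha_i\mid i\in I_w\rangle_\ZZ$, which follows by induction on $\ell(w)$ using a reduced expression (successive right-multiplication by $s_i$ introduces $\checkalpha_i$ into the $\ZZ$-span modulo coroots already present). Then I would invoke \cref{c:RwBsl}, which is the simply-laced input: any $\checketa\in\check R^+(w)\setminus\RpwB$ splits as $\checketa=\checkmu+\checkmu'$ with $\checkmu,\checkmu'\in\check R^+(w)$. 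A straightforward induction on $\height(\checketa)$ then expresses every element of $\check R^+(w)$ as a non-negative integer sum of elements of $\RpwB$, so the latter generates $\Qcal_{w,B}$.

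The general $P$ case of (1) requires more care because the parabolic condition $s_\checketa(\checkalpha_j)\notin\check R^+(w)$ for $j\in I_P$ (\cref{p:RwP}) excludes certain indecomposables from $\RpwP$. The plan is to construct a subset $\mathcal B_{w,P}\subseteq\RpwP$ of cardinality $|I_w^P|$ whose image in $\Qcal_{w,P}$ is already a $\ZZ$-basis; this is the non-canonical construction carried out in \cref{s:BwB} and \cref{s:BwP} using a reflection ordering and correcting the Borel-case basis element by element to meet the parabolic condition. Once $\mathcal B_{w,P}$ is in hand, the generating statement of (1) follows immediately. This is the main obstacle, and it is exactly where the simply-laced hypothesis is essential: in non-simply-laced types, the decomposition coefficient $c$ of \cref{d:indecomposableV2} can exceed $1$ (as illustrated by \cref{ex:G2}), so the clean decomposition of \cref{l:decomposablesl} fails and the inductive bookkeeping producing such a $\ZZ$-basis breaks down.

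For part (2), the approach is duality. By \cref{p:PicardGroup} and \cref{r:PicDiv}, the embedding $\operatorname{Pic}(\XX_{w,P})\hookrightarrow\operatorname{Cl}(\XX_{w,P})$ is encoded by the integer matrix $M=(\langle\omega_k,\checketa\rangle)_{\checketa\in\RpwP,\,k\in I_w^P}$. The statement that every $\QQ$-Cartier Weil divisor is Cartier is precisely the statement that this embedding is saturated, i.e., that $\operatorname{Cl}/\operatorname{Pic}$ is torsion-free. By Smith normal form this is equivalent to surjectivity of the transpose map $M^T\colon\ZZ^{\RpwP}\to\Qcal_{w,P}$, $e_\checketa\mapsto\checketa$, which is exactly statement (1).

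Finally, for part (3), by \cref{p:Qcal}(3) factoriality of $\XX_{w,P}$ is equivalent to $\RpwP$ being a $\ZZ$-basis of $\Qcal_{w,P}$. Since $\Qcal_{w,P}$ is free of rank $|I_w^P|$ and (1) produces a surjection $\ZZ^{\RpwP}\twoheadrightarrow\Qcal_{w,P}$, a generating set of size exactly $|I_w^P|$ is automatically a basis, because any surjection between free $\ZZ$-modules of equal finite rank is an isomorphism. Hence (3) reduces to (1) together with the cardinality condition $|\RpwP|=|I_w^P|$.
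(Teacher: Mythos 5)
Your proposal is correct, and its overall structure matches the paper: the technical core is the construction of the basis $\mathcal B_{w,P}\subseteq\RpwP$, from which (1), (2), (3) follow. For part~(1) in the Borel case you offer a slightly different and self-contained route: first show $\check R^+(w)$ spans $\Qcal_{w,B}$ by induction on $\ell(w)$, then use \cref{c:RwBsl} and induction on coroot height to reduce to indecomposables. This is a perfectly good alternative to the paper's route, which goes directly through the unipotent triangular matrix of \cref{p:Bw}; for general $P$ you fall back on the same $\mathcal B_{w,P}$ construction as the paper, as you must. The more interesting divergence is in part~(2). The paper's proof picks out the square sub-matrix $(\langle\omega_k,\checkmu_w(i)\rangle)_{i,k\in I^P_w}$ of rows indexed by $\mathcal B_{w,P}$, invokes its $\ZZ$-invertibility from \cref{p:BwP}(3), and concludes that a $\QQ$-linear combination of the $[D_k]$ landing in the integral Weil class group must already have integer coefficients. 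You instead observe that the statement is exactly saturation of $\operatorname{Pic}\hookrightarrow\operatorname{Cl}$, i.e.\ torsion-freeness of the cokernel of $M$, and that by Smith normal form (given the full-rank statement from \cref{p:Qcal}(1)) this is \emph{equivalent} to surjectivity of $M^T\colon\ZZ^{\RpwP}\to\Qcal_{w,P}$, which is precisely part~(1). This is a genuine clarification: it shows that (2) is a purely formal consequence of (1) together with the maximal-rank fact, and does not need the sharper information that the specific square sub-matrix $M_{w,P}$ is unipotent triangular. Your part~(3) matches the paper: combine \cref{p:Qcal}(3) with (1) and the fact that a size-$n$ generating set of a free $\ZZ$-module of rank $n$ is automatically a basis.
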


\begin{remark}\label{r:Emoto}
In type $A$,
Bousquet-M\'elou and Butler \cite{BMB07} characterised the permutations giving rise to factorial Schubert varieties in $SL_{n}/B$ in combinatorial terms and via a pattern-avoidance criterion, proving a conjecture of  Woo and Yong from \cite{WooYong}. In \cite[Proposition A.6]{Emoto} this criterion was shown to be equivalent to $|\RpwB|=|I_w^B|$, which proves \cref{t:Factorialsl}(3) for Schubert varieties in type $A$ full flag varieties. 
In \cite[Conjecture A.8]{Emoto} it was conjectured for general type Schubert varieties in $G/B$  that being factorial is equivalent to $|\RpwB|=|I^B_w|$. \cref{t:Factorialsl} implies that this conjecture holds when $G$ is simply-laced (and even extends to $G/P$  Schubert varieties). We observe that the conjecture is false in non-simply-laced type, though, see \cref{ex:G2} and \cref{r:G2}. Namely, the condition $|\RpwB|=|I^B_w|$ is equivalent to $\XX_{w,B}$ being $\QQ$-factorial, and there are $\QQ$-factorial Schubert varieties that are not factorial in non-simply-laced types.
\end{remark}

\begin{remark}\label{r:G2}
Applying \cref{p:Qcal} to \cref{ex:G2} we see that all Schubert varieties for $G_2$ are $\mathbb Q$-factorial, since $\RpwP$ is always a linearly independent set of cardinality $|I^P_w|$ in the case of $G_2$. Amongst these, the following list of Schubert varieties are not factorial, again as an application of the proposition:
\[
\XX_{s_2s_1s_2,P_2},\quad \XX_{s_2s_1s_2,B},\quad \XX_{s_1s_2s_1s_2,B}, \quad\XX_{s_2s_1s_2s_1s_2,B},\]
as well as
\[
\XX_{s_2s_1, P_1},\quad \XX_{s_1s_2s_1, P_1},\quad \XX_{s_2s_1s_2s_1, P_1},\quad\XX_{s_2s_1s_2s_1, B}.
\]
Namely, for each of these $\XX_{w,P}$ the $\ZZ$-span of $\RpwP$ is a proper sublattice of $\Qcal_{w,P}$, giving rise to some Weil divisors that are not Cartier. This can be seen directly by inspection of the sets $\RpwP$ which are described in \cref{ex:G2}.

We may compare the $G/B$ Schubert varieties listed above with the list of singular Schubert varieties for $G_2$ given in \cite[Theorem~2.4]{BilleyPostnikov}. In that list there is one additional singular Schubert variety, $\XX_{s_1s_2s_1s_2s_1,B}$. Since $R^+_{s_1s_2s_1s_2s_1, B}=\{\checkalpha_1,\checkalpha_1+\checkalpha_2\}$ this Schubert variety appears to be factorial and in that sense `less singular' than the others.
\end{remark}

 \cref{p:Qcal} is a consequence of the proof of \cref{p:PicardGroup} together with known properties of Schubert varieties, and we can give a proof straight away.

\begin{proof}[{Proof of \cref{p:Qcal}}]
The property (1) is a consequence of \cref{p:PicardGroup}, see also \cref{r:PicDiv}.
Now recall that $\RpwP$ is in bijection with the Schubert divisors of $\XX_{w,P}$. The Schubert variety $\XX_{w,P}$ is normal and has rational singularities \cite{Andersen}. Therefore it is $\QQ$-factorial if and only if the Betti numbers $b_2=\dim(H^2(\XX_{w,P},\QQ))$ and $b_{2\ell(w)-2}=\dim(H_{2\ell(w)-2}(\XX_{w,P},\QQ))$ agree, by \cite[Theorem~A]{ParkPopa}. Thus $\XX_{w,P}$ is $\QQ$-factorial if and only if the order of $\RpwP$ is equal to $b_2$, which is also $|I^P_w|$ and the rank of $\Qcal_{w,P}$. Since $\RpwP$ generates a full rank sublattice of $\Qcal_{w,P}$ by (1) this is furthermore equivalent to $\RpwP$ being a $\ZZ$-basis of that sublattice, and after tensoring with $\QQ$, a basis of $\Qcal_{w,P}\otimes\QQ$. Therefore (2) is proved. If $\RpwP$ is a $\ZZ$-basis of $\Qcal_{w,P}$ then the matrix from (1) is invertible (over $\ZZ$) and by \eqref{e:Dihom} we have that the Cartier divisor classes $[D_i]$ form a basis of $H_{2\ell(w)-2}(\XX_{w,P},\ZZ)$, which implies that $\XX_{w,P}$ is factorial and vice versa.  Part~(3) of the proposition follows.
\end{proof}

The proof of \cref{t:Factorialsl} will take up the remainder of the section. It involves constructing in simply-laced type a subset of Schubert classes in $H^{2\ell(w)-2}(\XX_{w,P},\ZZ)$ that map to a basis of $H_2(\XX_{w,P},\ZZ)$ under the dual of the cap product map \eqref{e:cap}, as explained in \cref{r:dualcap}.

This concrete construction will in particular imply that the cap product map has saturated image, that is, the Cartier divisor classes form a saturated sublattice in $H_{2\ell(w)-2}(\XX_{w,P},\ZZ)$ in simply-laced types.

\subsection{Construction of $\mathcal B_{w,B}$}\label{s:BwB} We temporarily restrict to the case $P=B$ in this subsection. Our goal is to construct a special subset of $\RpwB$ of cardinality $|I^B_w|$ in the simply-laced setting.

\begin{prop}\label{p:Bw} Let $G$ be of simply-laced type and  $w\in W$. There exists a subset
\[
\mathcal B_{w,B}:=\{\checketa_w(k)\mid k\in I_w^B\}
\]
of $\RpwB$ and an ordering of $I^B_w$ such that the matrix $M=(\langle\omega_i,\checketa_w(k)\rangle)_{k,i\in I^B_w}$ is unipotent lower-triangular. In particular, $\mathcal B_{w,B}$ embeds into $\Qcal_{w,B}$ as a $\ZZ$-basis.
\end{prop}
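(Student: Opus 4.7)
The plan is to construct $\mathcal{B}_{w,B}\subseteq \RpwB$ explicitly via a minimality argument in a suitable reflection ordering on $\check R^+(w)$, exploiting simply-lacedness crucially through \cref{c:RwBsl} and \cref{l:decomposablesl}.

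Fix a reduced expression $\mathbf{i}=(i_1,\dots,i_r)$ of $w$ with associated reflection ordering $<_{\mathbf i}$. For each $i \in I_w^B$, consider
\[
S_i := \{\checkmu \in \RpwB : \langle \omega_i, \checkmu\rangle > 0 \}.
\]
This set is nonempty: the coroot $s_{i_r}\cdots s_{i_{k_i+1}}(\checkalpha_i)$, where $k_i$ is the rightmost position of $i$ in $\mathbf{i}$, lies in $\check R^+(w)$ with $\checkalpha_i$-coefficient exactly $1$ (no later $i_j$ equals $i$), and if it is not already indecomposable one may descend by replacing any decomposable $\checkmu = \check{\nu} + \check{\nu}'$ (available by \cref{c:RwBsl}) with the summand of positive $\omega_i$-pairing, terminating in $\RpwB \cap S_i$ by strict height decrease. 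Define $\checketa_w(i) := \min_{<_{\mathbf i}} S_i$ and order $I_w^B$ by $i \prec j$ iff $\checketa_w(i) <_{\mathbf i} \checketa_w(j)$.

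Triangularity of $M$ in this ordering is then immediate from minimality: if $i \prec j$ and $\langle \omega_j, \checketa_w(i)\rangle > 0$, then $\checketa_w(i) \in S_j$ with $\checketa_w(i) <_{\mathbf i} \checketa_w(j) = \min S_j$, a contradiction. Showing each diagonal entry equals exactly $1$, not merely $\ge 1$, is the crux. I would establish this by contradiction using a refined summand-extraction argument patterned on the proof of \cref{p:indecomposableV2}, leveraging \cref{l:decomposablesl}(2) to guarantee that any hypothetical $\checketa \in S_i$ with $\langle \omega_i,\checketa\rangle \geq 2$ admits a strictly $<_{\mathbf i}$-smaller indecomposable element of $S_i$. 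The basis property then follows routinely: the simple-coroot support of any element of $\check R^+(w)$ lies in $I_w^B$, so $\mathcal{B}_{w,B}\subseteq \Qcal_{w,B}$; since $\langle \omega_j, \cdot\rangle$ reads off the $\checkalpha_j$-coefficient, the change-of-basis matrix from $\{\checkalpha_j\}_{j\in I_w^B}$ to $\mathcal{B}_{w,B}$ coincides (up to transposition) with $M$, and unipotent triangularity gives unimodularity.

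\textbf{Main obstacle.} Pinning each diagonal entry at exactly $1$ is the delicate step. It hinges on the additivity of $\omega_i$-pairings across simply-laced decompositions (the integer $c$ of \cref{d:indecomposableV2} equals $1$), which is what lets one extract an indecomposable summand with $\omega_i$-coefficient exactly $1$ at each recursion step. In non-simply-laced types $c$ may exceed $1$, this additive structure is lost, and the construction breaks — consistent with the paper's remark that the result is strictly simply-laced.
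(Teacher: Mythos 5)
There is a genuine gap, and it is located at exactly the step you flag as ``the crux.'' You \emph{define} $\checketa_w(i) := \min_{<_{\mathbf i}} S_i$ with respect to a single fixed reflection ordering, but you never prove that $\langle\omega_i,\min_{<_{\mathbf i}} S_i\rangle=1$, and the mechanism you gesture at does not obviously deliver it. Your own nonemptiness argument for $S_i$ starts from $\checkalpha := s_{i_r}\cdots s_{i_{k_i+1}}(\checkalpha_i)$ and recursively replaces a decomposable term $\checkmu=\check\nu+\check\nu'$ by the summand with positive $\omega_i$-pairing; but in a reflection ordering, if $\checkmu=\check\nu+\check\nu'$ with $\check\nu <_{\mathbf i} \checkmu <_{\mathbf i} \check\nu'$, then since $\checkalpha$ is the $<_{\mathbf i}$-\emph{smallest} inversion with positive $\omega_i$-pairing, the summand you keep is the one \emph{larger} than $\checkmu$ (the smaller one has $\omega_i$-pairing $0$). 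So the recursion climbs in the ordering; it shows $S_i$ contains some element with $\omega_i$-pairing $1$, but gives no reason that element should be the $<_{\mathbf i}$-minimum of $S_i$. It remains entirely possible, as far as your argument goes, that some indecomposable $\checketa$ with $\langle\omega_i,\checketa\rangle\ge 2$ (e.g.\ the highest coroot in $D_n$ or $E_n$, where the fundamental-weight pairing can be $2,\dots,6$) sits earlier in the fixed ordering than every $\checkmu\in S_i$ with pairing $1$, so the needed contradiction is not established. You also never show the minima $\min S_i$, $i\in I^B_w$, are pairwise distinct; if $\min S_i=\min S_j$ for $i\ne j$, your order $\prec$ on $I^B_w$ is not a total order, and more seriously the matrix $M$ has two equal rows and cannot be unipotent.

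The paper avoids both problems by a different construction. For each $k$ \emph{separately} it chooses a reduced expression $\mathbf i_k$ that puts the last $s_k$ as far right as possible across \emph{all} reduced expressions ($d_{\mathbf i_k}(k)=d_w(k)$), sets $\checkalpha_w(k)=s_{i_r}\cdots s_{i_{\ell+1}}(\checkalpha_k)$, and, if it is decomposable, replaces it by the unique indecomposable summand $\checketa_w(k)$ with $\langle\omega_k,\checketa_w(k)\rangle=1$ (which exists because the $\checkalpha_k$-coefficients of the summands are nonnegative and sum to $1$ in simply-laced type). Crucially, the support of $\checketa_w(k)$ consists only of $k$ itself and indices $i$ with $d_w(i)<d_w(k)$; ordering $I^B_w$ so that smaller $d_w$ comes first then yields lower triangularity and a unipotent diagonal simultaneously, with no need to compare elements of $S_i$ inside a single reflection ordering or to argue distinctness of minima. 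Replacing your $\min_{<_{\mathbf i}} S_i$ with this per-$k$ ``rightmost'' construction, and the reflection-order on $I^B_w$ with the $d_w$-order, closes the gap.
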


\begin{remark}
Whenever $w$ has full support, \cref{p:Bw} constructs a basis of the coroot lattice. In the example of $w=w_0$, this basis is just the basis of simple coroots.
\end{remark}

In preparation for the proof of this proposition we make the following definition.

\begin{defn}\label{def: etawk}
Let $w \in W$ and $s_k<w$.
For a reduced expression $w=s_{i_1}s_{i_2}\dotsc s_{i_r}$, we  write  $\mathbf i=(i_1,\dotsc, i_r)$, and define
$d_{\bf i}(k)$ to be  the distance of the right-most occurrence of $s_k$ in the  reduced expression $\mathbf i$ from the end of $\mathbf i$. In other words, $\ell=r-d_{\bf i}(k)+1$ is the right-most position of $k$ in $\mathbf i$, meaning $i_{\ell}=k$ and $i_m\ne k$ whenever $m>\ell$.
Now we define the `absolute minimal distance' from the end,
\begin{align}
d_w(k):=\min(\{d_{\bf i}(k)\mid \mathbf i\text{ is a reduced expression of } w\}).
\end{align}
If $d_{\bf i}(k)=d_w(k)$ we say that the reduced expression $\mathbf i$ is `rightmost' for $k$.
Let $\ell=r+1-d_w(k)$ so that in $i_\ell=k$ is the rightmost occurrence of $k$ in $\mathbf i$. We define an element $\checkalpha_w^{\mathbf i}(k)\in R^+(w)$ by
\begin{equation}\label{e:etawk}
\checkalpha_w^{\mathbf{i}}(k):=s_{i_r}s_{i_{r-1}}\dotsc s_{i_{\ell+1}}(\checkalpha_k).
\end{equation}
\end{defn}
 We often simply write
$\checkalpha_w(k):=\checkalpha_w^{\mathbf{i}}(k)$, namely  one  reduced expression  $\mathbf i$ with $d_{\mathbf i}(k)=d_w(k)$ is a prior fixed.
The coroot $\checkalpha_w(k)$
generally  depends on the choice of $\mathbf i$, however (see \cref{ex:A4nonunique}). When $\mathbf i$ is omitted from the notation, this will indicate  that the choice of $\mathbf i$ is not relevant to our purpose.

\begin{proof}[Proof of \cref{p:Bw}]  For every $k\in I^B_w$ let $\checkalpha_w(k)$ be as constructed in \cref{def: etawk}. Namely, we have
\begin{equation}\label{e:rightmostalpha}
\checkalpha_w(k)=s_{i_r}\dotsc s_{i_{\ell+1}}(\checkalpha_k)
\end{equation}
in terms of a right-most reduced expression for $k$.
Note that we can write $\checkalpha_w(k)$ as
\begin{equation}\label{e:alphawkexpanded}
\checkalpha_w(k)=\checkalpha_{k}+\sum_{m=\ell+1}^r c_m\checkalpha_{i_m},
\end{equation}
for some coefficients $c_m\in \ZZ_{\ge 0}$. Here the coefficient of $\checkalpha_k$ is $1$ using the simply-laced assumption and the fact that none of the simple reflections occurring in \eqref{e:rightmostalpha} include $s_{k}$. So we have $\langle \omega_k,\checkalpha_w(k)\rangle=1$. We now define $\checketa_w(k)$ as follows.

If $\checkalpha_w(k)\in \RpwB$, then we set $\checketa_w(k):=\checkalpha_w(k)$. If not then, by the simply-laced assumption and \cref{p:indecomposableV2} and \cref{l:decomposablesl}, we have that
$\checkalpha_w(k)$ is decomposable into a sum of elements of $\RpwB$. Now precisely one of these indecomposable summands $\checkmu$ has the property $\langle\omega_k,\checkmu\rangle=1$, and so we set $\checketa_w(k)=\checkmu$. Note that by construction of  $\checketa_w(k)$  as a summand of $\checkalpha_w(k)$ we obtain from \eqref{e:alphawkexpanded} an expansion for $\checketa_w(k)$,
\begin{equation}\label{e:etawkexpanded}
\checketa_w(k)=\checkalpha_{k}+\sum_{m=\ell+1}^r c'_m\checkalpha_{i_m}
\end{equation}
with $c'_m\in\ZZ_{\ge 0}$.

We now choose a total ordering  $\triangleleft$ on $I^B_w$,  such that $d_w(k_1)< d_w(k_2)$ implies that $k_1\triangleleft k_2$.  Consider the matrix $M=(\langle\omega_i,\checketa_w(k)\rangle)_{k,i\in I^B_w}$, with rows and columns ordered according to $(I^B_w,\triangleleft)$.
Note that any $i=i_m$ occurring in the sum in \eqref{e:etawkexpanded} has $d_w(i)<d_w(k)$. Thus any simple coroot $\checkalpha_i$ that appears in $\checketa_w(k)$ has $d_w(i)<d_w(k)$, and therefore $i\triangleleft k$ in the ordering of $I^B_w$. Thus, $M$ is lower-triangular. Moreover, as seen in \eqref{e:etawkexpanded}, $\langle\omega_k,\checketa_w(k)\rangle=1$, implying that  $M$ has $1$'s along the diagonal.
\end{proof}

\begin{remark}\label{r:dualcap} Recalling that $\Qcal_{w,B}= H_2(\XX_{w,B},\ZZ)$, the basis constructed in the lemma can be considered as giving rise to a (nonstandard) basis of the second homology group of $\XX_{w,B}$. Namely there is a curve class $[C_\checketa]$  associated to $\checketa\in \mathcal B_{w,B}$, which has the property that for every line bundle $\mathcal L_{\omega_i}|_{\XX_{w,B}}=\mathcal O(D_i)$ we have
\begin{equation}\label{e:dualcap}
\langle c_1(\mathcal O(D_i)),[C_\checketa]\rangle\overset{\small{\circled{1}}}{=\joinrel=}\langle\omega_i,\checketa\rangle\overset{\small{\circled{2}}}{=\joinrel=}\langle {[D_i]},\sigma^{w s_\checketa}\rangle,
\end{equation}
where $\sigma^{w s_\checketa}$ is the Schubert cohomology class in $H^{2\ell(w)-2}(\XX_{w,B},\ZZ)$ associated to $ws_\checketa$ and $i\in I^P_w$. Here $\circled{1}$ can be taken as the definition of $[C_\checketa]$ and $\circled{2}$  follows from  \eqref{e:Dihom}. This identity expresses the fact that the curve class $[C_\checketa]$ is the image of the Schubert class $\sigma^{w s_\checketa}$ under the dual of the cap product map \eqref{e:cap}. In other words, if we think of $\RpwB$ as indexing the Schubert basis of $H^{2\ell(w)-2}(\XX_{w,B},\ZZ)$, then $\mathcal B_{w,B}$ indexes a subset of this Schubert basis for which the associated curve classes $[C_\checketa]$ form a basis of $H_2(\XX_{w,B},\ZZ)$. In the smooth case, the elements of $\mathcal B_{w,B}$ are of course all the Schubert basis elements $\sigma^w$ in $H^{2\ell(w)-2}(\XX_{w,B},\ZZ)$, and the $[C_\checketa]$ are their Poincar\'e dual curve classes. In the general  non-simply-laced case such a basis of $H_{2}(\XX_{w,B},\ZZ)$ may not exist.
\end{remark}
\begin{remark}
 {We expect that} a weaker version of \cref{p:Bw} with `unipotent lower-triangular' replaced by `lower-triangular with diagonal entries in $\ZZ_{>0}$' holds in the non-simply-laced case. In the $G_2$ case this can be seen directly, see \cref{ex:G2}.
\end{remark}

Now we illustrate the construction of $\mathcal B_{w,B}$ in some examples.

\begin{example}\label{ex:A4nonunique} Consider type $A_4$. Consider the  following two   reduced expressions, $\mathbf{i}$ and $ \mathbf{i}'$, of the permutation $53142$ in $S_5$,
\begin{equation}\label{e:wred}
w=s_2s_1s_3s_4s_3s_2s_1=s_4s_3s_2s_1s_3s_2s_4.
\end{equation}
We simply denote  $\checkalpha_{j_1}+\checkalpha_{j_2}+\cdots+\checkalpha_{j_t}$ as  $\checkalpha_{j_1j_2\ldots j_t}$. Associated to $w$ we have
\[
\RpwB=\{\checkalpha_1,\checkalpha_2,\checkalpha_4,\checkalpha_{12},\checkalpha_{123},\checkalpha_{234}\}\qquad \text{and}\qquad \check R^+(w)=\RpwB\cup\{\checkalpha_{1234}\}.
\]
For $k=1,2,4$ clearly $d_w(k)=1$ and $\checkalpha_w(k)=\checkalpha_k$. But $d_w(3)=3$ and there are two options for $\checkalpha_w(3)$, depending on the choice of reduced expression $\mathbf i$. Namely, $\checkalpha^{\mathbf i}_w(3)=\checkalpha_{123}$, using the first reduced expression in \eqref{e:wred}, and $\checkalpha^{\mathbf i'}_w(3)=\checkalpha_{234}$, using the second one. Either of the two reduced expressions  gives rise to a subset with expected property in \cref{p:Bw}, namely
\[\mathcal B^{\mathbf i}_{w,B}=\{\checkalpha_1,\checkalpha_2,\checkalpha_4,\checkalpha_1+\checkalpha_2+\checkalpha_3\},\qquad \mathcal B^{\mathbf i'}_{w,B}=\{\checkalpha_1,\checkalpha_2,\checkalpha_4,\checkalpha_2+\checkalpha_3+\checkalpha_4\}.
\]
Note however, that the dual bases
\[\mathcal B^{\mathbf i.*}_{w,B}=\{\omega_1-\omega_3,\omega_2-\omega_3,\omega_4,\omega_3\},\qquad \mathcal B^{\mathbf i',*}_{w,B}=\{\omega_1,\omega_2-\omega_3,\omega_4-\omega_3,\omega_3\},
\]
while they differ, have the same sums: $\omega_1+\omega_2-\omega_3+\omega_4$.  This reflects the fact that this Schubert variety is Gorenstein, see \cref{r:PisB}.
\end{example}

\begin{example}\label{ex:D5counterex}
Consider in type $D_5$ the following element  $w\in W^{P_3}$, where $P_3$ is the maximal parabolic subgroup with   $I^{P_3}=\{3\}$. We write down three  reduced expressions   $\mathbf{i}$, $ \mathbf{i}'$ and $\mathbf{i}''$ of $w$ as follows.
\[\dynkin [label,label macro/.code={s_{\drlap{#1}}},
 edge length=.75cm]D5\ , \qquad
w=s_2s_3s_1s_2s_3s_4s_5s_3=s_2s_3s_1s_2s_3s_5s_4s_3 =s_1s_2s_3s_5s_4s_1s_2s_3.
\]
  We have
\[
\check R^+(w)=\{\checkalpha_3,\checkalpha_{35},\checkalpha_{34},\checkalpha_{345},\checkalpha_{2345}+\checkalpha_3,\ \checkalpha_{12345}+\checkalpha_2,\ \checkalpha_{23},\checkalpha_{123}\},
\]
and the subset of indecomposable elements is.
\[
\RpwB=\{\checkalpha_3,\checkalpha_{35},\checkalpha_{34},\checkalpha_{345},\checkalpha_{23},\checkalpha_{123}\}.
\]
We can find rightmost elements $\checkalpha_w(k)$  for every index $k$ using $\mathbf{i}$, $\mathbf{i}'$ and $\mathbf{i}''$, and these elements are indecomposable. We therefore have
\begin{eqnarray*}
\checketa_w(3)=\checkalpha_w^{\mathbf{i}}(3)&=&\checkalpha_3,\\
\checketa_w(5)=\checkalpha_w^{\mathbf{i}}(5)&=&\checkalpha_3+\checkalpha_5,\\
\checketa_w(4)=\checkalpha_w^{\mathbf{i}'}(4)&=&\checkalpha_3+\checkalpha_4,\\
\, \checketa_w(2)=\checkalpha_w^{\mathbf{i}''}(2)&=&\checkalpha_{2}+\checkalpha_{3},\\
\checketa_w(1)=\checkalpha_w^{\mathbf{i}''}(1)&=&\checkalpha_{1}+\checkalpha_2+\checkalpha_{3}.
\end{eqnarray*}
Then  $\mathcal B_{w,B}=\{\checketa_w(k)\mid k=1,\dotsc, 5\}$ is a subset of $\RpwB$ that descends to a basis of  $\Qcal_{w,B}$.
\end{example}

\begin{conj}
Let $G$ be of simply-laced type and $w\in W$. For $k\in I_w^B$ consider a reduced expression $\mathbf{i}$ of $w$ such that $d_{\mathbf{i}}(k)=d_w(k)$ and  let $\checketa_w(k)=\checkalpha_w^{\mathbf{i}}(k)$. Then $\checketa_w(k)$ is indecomposable in $\check R^+(w)$. In  particular, $\{\checketa_w(k)\mid k\in I^B_w\}$ constructed in this way satisfies the expected properties from \cref{p:Bw}.
\end{conj}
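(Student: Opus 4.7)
The plan is to reformulate the indecomposability of $\checketa_w(k)=\checkalpha_w^{\mathbf i}(k)$ as the reducedness of a subword product, and then argue by strong induction on $\ell(w)$. Writing $\mathbf i=(i_1,\dotsc,i_r)$ with rightmost $k$ at position $\ell$, set $u=s_{i_1}\cdots s_{i_{\ell-1}}$ and $v=s_{i_{\ell+1}}\cdots s_{i_r}$, both reduced, so that $w=us_kv$ is reduced. A short computation gives $s_{\checketa_w(k)}=v^{-1}s_kv$ and hence $ws_{\checketa_w(k)}=uv$. By \cref{p:indecomposableV2} together with \cref{c:RwBsl}, the indecomposability of $\checketa_w(k)$ in $\check R^+(w)$ is equivalent to $\ell(uv)=\ell(u)+\ell(v)=r-1$, or equivalently (via the standard identity $\ell(u)+\ell(v)-\ell(uv)=2\,|\check R^+(u)\cap \check R^+(v^{-1})|$) to the vanishing $\check R^+(u)\cap \check R^+(v^{-1})=\emptyset$.

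For the induction, the base case $\ell(w)=1$ is immediate. In the inductive step, the case $i_r=k$ is trivial since then $v=e$ and $\checketa_w(k)=\checkalpha_k$ is a simple coroot, so assume $i_r\neq k$. Setting $w'=s_{i_1}\cdots s_{i_{r-1}}$ and $\mathbf i'=(i_1,\dotsc,i_{r-1})$, I would first observe that $\mathbf i'$ remains rightmost for $k$ in $w'$: a reduced expression of $w'$ with strictly smaller $d(k)$ would, upon appending $s_{i_r}$, give a reduced expression of $w$ violating $d_{\mathbf i}(k)=d_w(k)$. Hence by the inductive hypothesis, $\checketa':=\checkalpha_{w'}^{\mathbf i'}(k)=s_{i_r}(\checketa_w(k))$ is indecomposable in $\check R^+(w')$. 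Using $\check R^+(w)=\{\checkalpha_{i_r}\}\sqcup s_{i_r}(\check R^+(w'))$ together with \cref{l:decomposablesl}, any hypothetical decomposition $\checketa_w(k)=\checkmu+\checkmu'$ in $\check R^+(w)$ splits, after applying $s_{i_r}$, into two cases: either both $s_{i_r}(\checkmu),s_{i_r}(\checkmu')$ lie in $\check R^+(w')$ and produce a decomposition of $\checketa'$ contradicting the inductive hypothesis, or one summand equals $\checkalpha_{i_r}$, which forces $\langle\alpha_{i_r},\checketa'\rangle=-1$ and places $\xi:=\checketa'+\checkalpha_{i_r}=\checketa_w(k)$ inside $\check R^+(w')$.

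The main obstacle is disposing of this second case. The plan is to extract from the membership $\checketa_w(k)\in \check R^+(w')$ a position $m<\ell$ with $i_m=k$, together with an expression $\checketa_w(k)=s_{i_{r-1}}\cdots s_{i_{m+1}}(\checkalpha_k)$; here the constraint $m<\ell$ follows from comparing $\checkalpha_k$-coefficients, using that $i_p\neq k$ for $p>\ell$. Equating this with $\checketa_w(k)=\checketa'+\checkalpha_{i_r}$ and isolating the action of $s_{i_\ell}=s_k$ at position $\ell$ yields, after a short root-system computation (ruling out all coefficients but one by tracking the $\checkalpha_k$-coefficient, which must equal $1$ on both sides), the root coincidence $s_{i_{\ell-1}}\cdots s_{i_{m+1}}(\checkalpha_k)=s_{i_{\ell+1}}\cdots s_{i_{r-1}}(\checkalpha_{i_r})$, that is, a nonempty element of $\check R^+(u)\cap \check R^+(v^{-1})$. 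The delicate final step, which I expect to be the crux, is to translate such a coincidence into a sequence of length-$2$ and length-$3$ braid moves, specific to the simply-laced setting, that rewrites $\mathbf i$ as a reduced expression of $w$ in which $s_k$ has been migrated strictly to the right of position $\ell$, directly contradicting $d_{\mathbf i}(k)=d_w(k)$. The simply-laced hypothesis plays a decisive role here through the integer bounds on coroot-coefficients that rule out the higher-coefficient cases in the coincidence analysis and through the simpler braid combinatorics of types $ADE$.
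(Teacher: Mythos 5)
This statement is an open conjecture in the paper; there is no proof to compare against. Your opening reformulation is correct and appealing: since $s_{\checketa_w(k)}=v^{-1}s_kv$ one does indeed have $ws_{\checketa_w(k)}=uv$, so by \cref{p:indecomposableV2} indecomposability is equivalent to $\ell(uv)=\ell(u)+\ell(v)$, i.e.\ to $\check R^+(u)\cap\check R^+(v^{-1})=\emptyset$. This is a clean translation of the conjecture into a statement about reduced products.

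However, the rest of the argument does not close the gap. First, the induction is essentially circular: in Case 2 you deduce $\checketa_w(k)\in\check R^+(w')$ and then aim to ``extract'' a witness of $\check R^+(u)\cap\check R^+(v^{-1})\ne\emptyset$; but your own first paragraph already shows that decomposability of $\checketa_w(k)$ is \emph{equivalent} to the nonemptiness of that intersection, so the intermediate inductive reduction produces nothing that the reformulation did not already give for free. Second, the specific claim that the membership $\checketa_w(k)\in\check R^+(w')$ produces a position $m<\ell$ with $i_m=k$ is unjustified. Writing $\checketa_w(k)=s_{i_{r-1}}\cdots s_{i_{m+1}}(\checkalpha_{i_m})$, one can force $m<\ell$ by tracking the $\checkalpha_k$-coefficient (which is $1$, and would be $0$ if $m>\ell$, and would give $\checketa_w(k)=\checketa'$ if $m=\ell$), but this does \emph{not} force $i_m=k$: a positive coroot with $\checkalpha_k$-coefficient one can perfectly well be of the form $s_{i_{r-1}}\cdots s_{i_{m+1}}(\checkalpha_{i_m})$ with $i_m\ne k$ and the $\checkalpha_k$ contribution created by the intermediate $s_{i_\ell}=s_k$. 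Finally, the genuine content of the conjecture is exactly what you flag as ``the delicate final step'': given $\check R^+(u)\cap\check R^+(v^{-1})\ne\emptyset$ for a reduced factorisation $w=us_kv$ with $u,v$ free of $s_k$, produce a reduced word for $w$ in which $s_k$ sits strictly to the right of position $\ell$. You neither supply that argument nor a concrete mechanism for the braid migration, and this is precisely the hard combinatorial heart of the statement (compare \cref{c:CoxeterConj}, which is a closely related Coxeter-group assertion that the authors also leave open). So the proposal contributes a useful reformulation but is not a proof.
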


We note that the simply-laced assumption is necessary for this conjecture.

\subsection{Construction of $\mathcal B_{w,P}$}\label{s:BwP} The generalisation of the basis $\mathcal B_{w,B}$ to the parabolic setting is not completely straightforward. It   will use a recursive procedure that is based on \cref{l:charWPcond}. Namely \cref{l:M} below provides us with a mechanism for modifying the basis $\mathcal B_{w,B}$ in order to ultimately construct our basis of $\RpwP$. In the following we fix $\mathcal B_{w,B}=\{\checketa_w(k)\mid k\in I^B_w\}$ constructed as in \cref{p:Bw}.
\begin{defn}\label{def: Padaptation}  Let $G$ be of simply-laced type.
We call a subset $\mathcal M\subseteq \RpwB$ a $P$-adaptation of $\mathcal B_{w,B}$, if there is a bijection $\checkmu=\check\mu_w:I^P_w\to\mathcal M$ with the property that
\[\checkmu(k)\equiv \, \checketa_w(k)\mod \langle\checkalpha_j\mid j\in I_P\rangle\qquad \text{for all $k\in I^P_w$}.
\]
\end{defn}
\begin{lemma}\label{l:M} Let $G$ be of simply-laced type.
Let $\mathcal M$ be a $P$-adaptation of $\mathcal B_{w,B}$. Suppose that there exist   $k_0\in I^P_w$ and   $j\in I_P$ such that $ws_{\checkmu(k_0)}(\checkalpha_j)<0$.
Then  $\checkmu'(k_0):=s_{\checkmu(k_0)}(\checkalpha_j)$ satisfies:
\begin{enumerate}
\item $\checkmu'(k_0)\in \RpwB$;
\item $\checkmu'(k_0)\equiv \checkmu(k_0)\mod \langle\checkalpha_j\mid j\in I_P\rangle_\ZZ$; 
\item the heights are related by $\height(\checkmu'(k_0))> \height(\checkmu(k_0))$.
\end{enumerate}
For $k\ne k_0$ let us set $\checkmu'(k)=\checkmu(k)$, then we have   another $P$-adaptation $\mathcal M'$ of $\mathcal B_{w,B}$ defined by
\[
\mathcal M'=\{\checkmu'(k)\mid k\in I^P_w\}.
\]
\end{lemma}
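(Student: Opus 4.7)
The starting point is Lemma~\ref{l:charWPcond}: the hypothesis $ws_{\check\mu(k_0)}(\check\alpha_j)<0$ together with $j\in I_P$ gives directly that $\check\mu'(k_0)=s_{\check\mu(k_0)}(\check\alpha_j)$ lies in $\check R^+(w)$.  Expanding $\check\mu'(k_0)=\check\alpha_j-c\,\check\mu(k_0)$ with $c:=\langle \mu(k_0),\check\alpha_j\rangle$, the simply-laced hypothesis restricts $c\in\{-1,0,1,2\}$, and I plan to rule out each non-negative value in turn.  If $c=2$ then $\check\mu(k_0)=\check\alpha_j$ and $\check\mu'(k_0)=-\check\alpha_j<0$, contradicting the previous step (and also the congruence $\check\mu(k_0)\equiv\check\eta_w(k_0)\bmod\langle\check\alpha_{j'}\mid j'\in I_P\rangle$, since $\check\eta_w(k_0)$ has a nonzero $\check\alpha_{k_0}$-coefficient with $k_0\in I^P$).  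If $c=1$, then $\check\alpha_j=\check\mu(k_0)+\check\mu'(k_0)$ would express the simple coroot $\check\alpha_j$ as a sum of two positive coroots, which is impossible by heights.  If $c=0$, then $\check\mu'(k_0)=\check\alpha_j$ would sit in $\check R^+(w)$, contradicting $w\in W^P$ and $j\in I_P$ (which force $w(\check\alpha_j)>0$).  Hence $c=-1$ and
\[
\check\mu'(k_0)=\check\mu(k_0)+\check\alpha_j.
\]

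From this explicit formula, claims (2) and (3) are immediate: the difference $\check\mu'(k_0)-\check\mu(k_0)=\check\alpha_j$ lies in $\langle\check\alpha_{j'}\mid j'\in I_P\rangle_\ZZ$ since $j\in I_P$, and $\height(\check\mu'(k_0))=\height(\check\mu(k_0))+1$.  For claim (1), Proposition~\ref{p:indecomposableV2} reduces the problem to verifying $\ell(ws_{\check\mu'(k_0)})=\ell(w)-1$.  I would set $w':=ws_{\check\mu(k_0)}$ (of length $\ell(w)-1$ since $\check\mu(k_0)\in\RpwB$) and $w'':=w's_j$ (of length $\ell(w)-2$ by the hypothesis $w'(\check\alpha_j)<0$), and exploit the identity $s_{\check\mu'(k_0)}=s_{\check\mu(k_0)}s_js_{\check\mu(k_0)}$ to write $ws_{\check\mu'(k_0)}=w''s_{\check\mu(k_0)}$.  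A short computation, using $s_j(\check\mu(k_0))=\check\mu'(k_0)$ and $s_{\check\mu(k_0)}(\check\mu'(k_0))=\check\alpha_j$, gives
\[
w''(\check\mu(k_0))\ =\ ws_{\check\mu(k_0)}(\check\mu'(k_0))\ =\ w(\check\alpha_j)\ >\ 0,
\]
again because $w\in W^P$ and $j\in I_P$.  The standard length criterion then yields $\ell(ws_{\check\mu'(k_0)})=\ell(w'')+1=\ell(w)-1$, proving (1).

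For the remaining $P$-adaptation assertion about $\mathcal M'$, the congruence $\check\mu'(k)\equiv\check\eta_w(k)\bmod\langle\check\alpha_{j'}\mid j'\in I_P\rangle$ is trivial for $k\ne k_0$, and for $k=k_0$ it follows by combining (2) with the analogous congruence for $\check\mu(k_0)$.  Bijectivity of $k\mapsto\check\mu'(k)$ is automatic because the classes of the $\check\mu'(k)$ modulo $\langle\check\alpha_{j'}\mid j'\in I_P\rangle_\ZZ$ coincide with those of the $\check\eta_w(k)$, which are distinct by the triangular structure from Proposition~\ref{p:Bw}.  The main obstacle I expect is the case analysis pinning down $c=-1$: the three exclusions must weave together the simply-laced pairing constraint, the $P$-adaptation congruence, the condition $w\in W^P$, and the conclusion of Lemma~\ref{l:charWPcond}, and the whole mechanism breaks down outside simply-laced types where larger values of $c$ become admissible.
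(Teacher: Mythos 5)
Your derivation that $\checkmu'(k_0)=\checkmu(k_0)+\checkalpha_j$ is correct and mirrors the paper's, though you present the elimination of the non-negative values of $c=\langle\mu(k_0),\checkalpha_j\rangle$ more explicitly (the paper compresses this into one line by observing that $\checkmu'(k_0)\ne\checkalpha_j$ forces the coefficient to be nonzero, and positivity of both roots forces it positive). Parts (2), (3), and the closing $P$-adaptation bookkeeping are handled correctly and are routine once the explicit formula is in hand.

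Where you genuinely depart from the paper is in part (1). The paper proves $\checkmu'(k_0)\in\RpwB$ by showing it is indecomposable in $\check R^+(w)$ (via \cref{p:indecomposableV2}): assuming a decomposition $\checkmu'(k_0)=\checkmu_1+\checkmu_2$ with $\checkmu_i\in\check R^+(w)$, it manufactures a decomposition of $\checkmu(k_0)$, contradicting $\checkmu(k_0)\in\RpwB$. You instead verify $\ell(ws_{\checkmu'(k_0)})=\ell(w)-1$ directly by a length computation, factoring $ws_{\checkmu'(k_0)}=w''s_{\checkmu(k_0)}$ with $w''=ws_{\checkmu(k_0)}s_j$ of length $\ell(w)-2$, and checking $w''(\checkmu(k_0))=w(\checkalpha_j)>0$. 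This is a legitimate alternative: it avoids invoking the indecomposability characterisation a second time and keeps the argument at the level of Coxeter combinatorics. One step is stated slightly too strongly, though. The criterion $w''(\checkmu(k_0))>0$ only gives the strict inequality $\ell(w''s_{\checkmu(k_0)})>\ell(w'')=\ell(w)-2$, not immediately $\ell(w''s_{\checkmu(k_0)})=\ell(w'')+1$ as you assert; to pin it to exactly $\ell(w)-1$ you must combine this lower bound with the upper bound $\ell(ws_{\checkmu'(k_0)})<\ell(w)$, which you already have from $\checkmu'(k_0)\in\check R^+(w)$ (your first step) but do not re-invoke at this point. With that sentence added, your argument closes.
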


\begin{proof}
By  \cref{l:charWPcond} we have $\checkmu'(k_0)\in \check R^+(w)$. On the other hand we have
\begin{equation}\label{e:mu'k0}
\checkmu'(k_0)=s_{\checkmu(k_0)}(\checkalpha_j)=\checkalpha_j-\langle\checkalpha_j,\checkmu(k_0)^\vee\rangle\checkmu(k_0).
\end{equation}
Note that $w(\checkmu'(k_0))<0$ implies $\checkmu'(k_0)\ne\checkalpha_j$ (indeed, $w(\checkalpha_j)>0$ as $w\in W^P$). Therefore \eqref{e:mu'k0} together with the fact that $\checkmu'(k_0)>0$ implies that we must have $c':=-\langle\checkalpha_j,\checkmu(k_0)^\vee\rangle>0$. Since $G$ is of simply-laced type, the only possible positive factor $c'$ is $1$. Therefore we are simply replacing $\checkmu(k_0)$ by $\checkmu'(k_0)=\checkmu(k_0)+\checkalpha_j$. This implies (2). To prove (1) it remains to show that $\checkmu'(k_0)$ is indecomposable, by \cref{p:indecomposableV2}. Namely, suppose indirectly it has a decomposition  $\checkmu'(k_0)=\checkmu(k_0)+\checkalpha_j=\checkmu_1+\checkmu_2$.   Then $s_j(\checkmu'(k_0))=\checkmu(k_0)=\checkmu'(k_0)-\alpha_j$ and so we must have that one of the summands, without loss of generality $\checkmu_1$, satisfies $s_j(\checkmu_1)=\checkmu_1-\alpha_j$ (and the other $s_j(\checkmu_2)=\checkmu_2$). Therefore, $\checkmu_1=\checkmu_1'+\checkalpha_j$ for another positive root $\checkmu_1'$.   Now $\checkmu_1\in \check R^+(w)$ implies $w(\checkmu_1'+\checkalpha_j)=w(\checkmu_1')+w(\checkalpha_j)<0$. But $w(\checkalpha_j)>0$ since $j\in I_P$, so that $w(\checkmu_1')<0$ necessarily. Then $\checkmu_1'+\checkmu_2=\checkmu(k_0)$ gives a decomposition of $\checkmu(k_0)$ with summands in $\check R^+(w)$ contradicting the assumption that $\checkmu(k_0)$ is indecomposable. Thus we have proved (1).

It is clear that $\height(\checkmu'(k_0))> \height(\checkmu(k_0))$ and (3) is also proved. Finally, $\mathcal M'$ is again a $P$-adaptation of $\mathcal B_{w,B}$ by (1) and (2).
\end{proof}
\begin{defn}[{Recursive construction of $\mathcal B_{w,P}$}]\label{d:BwP} Suppose $G$ is of simply-laced type and $w\in W^P$. Recall
$\mathcal B_{w,B}=\{\checketa_w(k)\mid k\in I_w^B\}$ was constructed in \cref{p:Bw}. We   construct a subset $\mathcal B_{w,P}$ of $\check R^+(w)$ as follows. Start with the subset $\mathcal M_0:=\{\checketa_w(k)\mid k\in I_w^P\}$ of $\check R^+(w)$, which is clearly a $P$-adaption of $\mathcal B_{w,B}$. Suppose $d\in\ZZ_{\ge 0}$ and we have already constructed a $P$-adaption $\mathcal M_{d}=\{\checkmu(k)\mid k\in I^P_w\}$ of $\mathcal B_{w,B}$. If for every $\checkmu\in \mathcal M_d$ we have that $ws_\checkmu\in W^P$ then set $\mathcal B_{w,P}:=\mathcal M_d$. Otherwise, pick a $k_0\in I^P_w$ and $j\in I_P$ such that $w s_{\checkmu(k_0)}(\checkalpha_j)<0$ and define
\[
\mathcal M_{d+1}:=(\mathcal M_{d})'
\]
using the construction from \cref{l:M}. This process terminates after finitely many steps since each step increases the height of one element, but the height function is bounded. 
\end{defn}

\begin{lemma}\label{p:BwP}
Suppose $G$ is of simply-laced type and $w\in W^P$. The set $\mathcal B_{w,P}$ constructed in \cref{d:BwP} has the following properties.
\begin{enumerate}
\item $\mathcal B_{w,P}\subseteq \RpwP$.
\item $\mathcal B_{w,P}=\{\checkmu(k)\mid k\in I^P_w\}$ with $\checkmu(k)\equiv \checketa_w(k)\mod \langle\checkalpha_j\mid j\in I_P\rangle_\ZZ$.
\item There exists a total ordering on $I^P_w$ for which
\[
\checkmu(k)\equiv\checkalpha_k+\sum_{k'<k}c_{k,k'}\checkalpha_{k'}\mod \langle\checkalpha_j\mid j\in I_P\rangle_\ZZ,
\]
with all coefficients in $\ZZ_{\ge 0}$.
\end{enumerate}
\end{lemma}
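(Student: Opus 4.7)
The plan is to verify each of the three properties, treating them essentially as invariants (or consequences) of the recursive construction in \cref{d:BwP}.

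\emph{Termination.} First, recall that the procedure terminates in finitely many steps, as already noted in \cref{d:BwP}: at each step, \cref{l:M}(3) strictly increases the height of one of the elements of $\mathcal M_d$, while all elements remain in the finite set $\RpwB$ and hence have bounded height. So $\mathcal B_{w,P}=\mathcal M_N$ for some $N$, and by the stopping condition, $ws_{\checkmu}\in W^P$ for all $\checkmu\in \mathcal B_{w,P}$.

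\emph{Proof of (1).} By induction on $d$, every $\mathcal M_d$ is contained in $\RpwB$. The base case is $\mathcal M_0=\{\checketa_w(k)\mid k\in I_w^P\}\subseteq \mathcal B_{w,B}\subseteq \RpwB$ from \cref{p:Bw}. The inductive step is exactly \cref{l:M}(1), which guarantees that replacing $\checkmu(k_0)$ by $s_{\checkmu(k_0)}(\checkalpha_j)$ keeps us inside $\RpwB$. Thus $\mathcal B_{w,P}\subseteq \RpwB$, and combined with the termination condition $ws_{\checkmu}\in W^P$ and the characterisation of $\RpwP$ via \cref{p:RwP} (equivalently \cref{l:charWPcond}), we conclude $\mathcal B_{w,P}\subseteq \RpwP$.

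\emph{Proof of (2).} The property of being a $P$-adaptation of $\mathcal B_{w,B}$ is preserved under the recursion: this is precisely the content of \cref{l:M}(2), which says that at each step the replaced element $\checkmu'(k_0)$ satisfies $\checkmu'(k_0)\equiv \checkmu(k_0)\mod \langle\checkalpha_j\mid j\in I_P\rangle_\ZZ$. Combined with the base-case $P$-adaptation $\mathcal M_0$, this shows inductively that $\mathcal B_{w,P}$ is again a $P$-adaptation, i.e.\ $\checkmu(k)\equiv \checketa_w(k)\mod \langle\checkalpha_j\mid j\in I_P\rangle_\ZZ$ for all $k\in I^P_w$.

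\emph{Proof of (3).} This is essentially inherited from the unipotent lower-triangular structure of $\mathcal B_{w,B}$ established in \cref{p:Bw}. Choose the ordering $\triangleleft$ on $I^B_w$ produced in that proof (so that $d_w(k_1)<d_w(k_2)\Rightarrow k_1\triangleleft k_2$), and restrict it to $I^P_w$. From \eqref{e:etawkexpanded} we have
\[
\checketa_w(k)\equiv \checkalpha_k+\sum_{k'\triangleleft k,\, k'\in I^B_w} c_{k,k'}\checkalpha_{k'}\pmod{0},
\]
with $c_{k,k'}\in\ZZ_{\geq 0}$. Projecting modulo $\langle\checkalpha_j\mid j\in I_P\rangle_\ZZ$ kills all the simple coroots indexed by $I_P$, leaving only those with indices in $I^P_w$ (those indexed by $I^P\setminus I_w$ do not appear in any $\checketa_w(k)$ with $k\in I_w^B$ since they correspond to $s_{k'}\not<w$). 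Combining this with part (2), we obtain
\[
\checkmu(k)\equiv \checketa_w(k)\equiv \checkalpha_k+\sum_{k'\triangleleft k,\, k'\in I^P_w} c_{k,k'}\checkalpha_{k'}\pmod{\langle\checkalpha_j\mid j\in I_P\rangle_\ZZ},
\]
which is precisely the desired triangular expansion.

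\emph{Main obstacle.} The crux of the argument is really the combination of \cref{l:M}(1) and (2): that the recursion preserves both membership in $\RpwB$ \emph{and} the $P$-adaptation congruence. Once these invariants are in hand, property (1) is a matter of interpreting the stopping condition via \cref{l:charWPcond}/\cref{p:RwP}, and properties (2), (3) follow directly from the invariance under the recursive step and the already-established structure of $\mathcal B_{w,B}$.
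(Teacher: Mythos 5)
Your proof is correct and follows the same route as the paper: (1) from \cref{l:M}(1) plus the termination/stopping condition, (2) from \cref{l:M}(2) preserving the $P$-adaptation congruence, and (3) from the unipotent triangular structure of $\mathcal B_{w,B}$ established in \cref{p:Bw} combined with (2). You simply spell out the inductive invariants more explicitly than the paper does.
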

\begin{proof}
For property (1) the fact that the $\checkmu_{w}(k)$ lie in $\RpwB$ follows from the recursive construction and \cref{l:M}. Additionally, the recursion defining $\mathcal B_{w,P}$ only terminates if all elements $w s_{\checkmu(k)}$ lie in $W^P$. Therefore $\mathcal B_{w,P}\subseteq \RpwP$. Next, the recursion  does not alter the elements modulo $\langle \checkalpha_j\mid j\in I_P\rangle$. Therefore (2) and (3) hold as a consequence of \cref{p:Bw}.
\end{proof}

\begin{remark}\label{r:dualcapP}  The construction of $\mathcal B_{w,P}$ from \cref{d:BwP} is highly dependent on the choices. However, the image of $\mathcal B_{w,P}$ in $H_2(\checkXX_{w,P},\ZZ)$ only depends on $\mathcal B_{w,P}$ modulo $\langle\checkalpha_j\mid j\in I_P\rangle_\ZZ$ and is therefore completely determined by the choice of $\mathcal B_{w,B}$. The interpretation of $\mathcal B_{w,B}$ given in \cref{r:dualcap} extends verbatim to the simply-laced $G/P$ setting 
just with $P$ replacing $B$ everywhere. Moreover, the basis of $H_2(\checkXX_{w,P},\ZZ)$ given by $\mathcal B_{w,P}$ can be constructed directly as an image of the subset $\{\checketa_w(k)\mid k\in I^P_w\}$ of $\mathcal B_{w,B}$ 
via the map
\[
H_2(\checkXX_{w,B},\ZZ)\longrightarrow H_2(\checkXX_{w,P},\ZZ),
\]
as a consequence of \cref{p:BwP}(2).
 \end{remark}

We are now in a position to prove \cref{t:Factorialsl}.

\subsection{Proof of \cref{t:Factorialsl} and \cref{mt:simplylaced-factorial}}\label{s:Qcal}
By \cref{p:BwP} there exists a subset $\mathcal B_{w,P}$, which is related to the standard basis $\{\checkalpha_k\mid k\in I^P_w\}$ of $\Qcal_{w,P}$ by a lower-triangular integer matrix with diagonal entries $1$. It follows that $\mathcal B_{w,P}$ is itself a basis of  $\Qcal_{w,P}$, and therefore that $\RpwP$ generates the lattice $\Qcal_{w,P}$. This proves \cref{t:Factorialsl}(1). Now, for a Weil divisor $D$  we have $[D]=\sum_{\checketa\in \RpwP} n_\checketa[\XX_{ws_{\checketa},P}]$ with coefficients $n_\checketa\in \ZZ$.
If $D$ is also $\QQ$-Cartier, we may write $[D]=\sum m_k[D_k]$, a $\QQ$-linear combination of the generators $[D_k]=\sum_{\checketa\in \RpwP}\langle\omega_k,\checketa\rangle [\XX_{ws_{\checketa},P}]$ of the Cartier class group from \cref{p:PicardGroup}. Note also that the $m_k$ are unique since the $[D_k]$ are linearly independent. Or equivalently, the vectors $(\langle\omega_k,\checketa\rangle)_{\checketa\in \RpwP}$ are linearly independent, compare \cref{p:Qcal}(1). Thus we have the vector identity $(n_\checketa)_{\checketa}=\sum_{k\in I^P_w} m_k(\langle\omega_k,\checketa\rangle)_{\checketa}$.
Projecting onto the subspace with coordinates indexed by $\mathcal B_{w,P}$ we obtain $(n_{\checkmu_w(i)})_{i\in I^P_w}=\sum_{k\in I^P_w} m_k(\langle\omega_k,\checkmu_w(i)\rangle)_{i\in I^P_w}$.
But the matrix $(\langle\omega_k,\checkmu_w(i)\rangle)_{i,k\in I^P_w}$ is invertible over $\ZZ$ by \cref{p:BwP}(3). Therefore it follows that the $m_k$ all lie in $\ZZ$. In other words, $D$ is Cartier and we have proved (2). Finally, (3) follows from (1) combined with \cref{p:Qcal}(3).

We have now finished proving \cref{t:Factorialsl}.
We finally observe that \cref{t:Factorialsl} implies  \cref{mt:simplylaced-factorial} from the introduction using that $\RpwP$ indexes the Schubert divisors, so that $|\RpwP|=b_{2\ell(w)-2}(\XX_{w,P})$, and that  $|I^P_w|=b_2(\XX_{w,P})$, compare \cref{p:PicardGroup}.
\section{Gorenstein and Fano Schubert varieties}\label{s:Fano}
We now use the constructions from the previous section to study the anticanonical divisors of our Schubert varieties $\XX_{w,P}$. As a starting point we note that we can make the following explicit choice of an anticanonical divisor. The following proposition was implicit in \cite{ramanathan, KLS}, and is explained as \cite[Proposition 2.2 and Lemma 3.8]{LSZ} with a replacement of $w_0w_P$ by $w$. Here we include the arguments     adapted therein verbatim for completeness.
\begin{prop}\label{p:BasicKX} For a general Schubert variety $\XX_{w,P}$ an anticanonical divisor is given by
\[
-K_{\XX_{w,P}}=\sum_{j\in I^B_w} \XX^{s_j}_{w,P} + \sum_{\checketa\in \RpwP}\XX_{ws_\checketa,P},
\]
in terms of Schubert and projected Richardson varieties.
\end{prop}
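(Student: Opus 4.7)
The plan is to identify the claimed divisor as the complement of the open projected Richardson variety
\[
\XX^{\circ}_{w,P}\;:=\;\pi_P\bigl((BwB\cap B_-B)/B\bigr)\;\subset\;\XX_{w,P}
\]
inside $\XX_{w,P}$, and to deduce the anticanonicity from the log Calabi--Yau property of this open subvariety. This is the mechanism underlying the references \cite{ramanathan, KLS, LSZ} that the proposition cites, so my plan is essentially to transcribe the LSZ argument into the setting with $w$ in place of $w_0w_P$.

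The first step is to enumerate the irreducible components of the boundary $\XX_{w,P}\setminus\XX^{\circ}_{w,P}$. These split into two families: (i) the components contained in $\XX_{w,P}\setminus BwP/P$, which are precisely the Schubert divisors $\XX_{x,P}$ for $x\prec w$ in $W^P$, equivalently $\XX_{ws_\checketa,P}$ for $\checketa\in\RpwP$ by the characterisation of $\RpwP$ in \cref{d:R+wP} and \cref{p:RwP}; and (ii) the components obtained by intersecting $\XX_{w,P}$ with the complement of the opposite open cell, which by \cref{d:RichardsonDivisors} are precisely the projected Richardson divisors $\XX^{s_j}_{w,P}$ for $j\in I^B_w$. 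A short check shows that the two families are disjoint in codimension one and that each component appears with multiplicity one as a reduced divisor; together they give the expression $\partial\XX_{w,P}$ on the right-hand side of the claim.

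The second step is to construct a rational top-degree form $\omega$ on $\XX_{w,P}$ that is regular and nowhere vanishing on $\XX^{\circ}_{w,P}$ and acquires a simple pole along each boundary divisor. The existence of such an $\omega$ is the log Calabi--Yau property of the open Richardson variety; concretely one may pull back a standard top form from affine space via a Deodhar/Kazhdan--Lusztig parametrisation of $\XX^{\circ}_{w,P}$, or transport it from the Bott--Samelson resolution $Z_{\mathbf{i}}\to\XX_{w,P}$, on which the canonical bundle is computed explicitly by iterated adjunction along the tower of $\mathbb{P}^1$-bundles. The hard part, and where I expect the main obstacle to lie, is verifying that every boundary divisor occurs in $\operatorname{div}(\omega)$ with multiplicity exactly $-1$. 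On the Schubert side this amounts to a local computation at a generic smooth point of $\XX_{ws_\checketa,P}$ in unipotent coordinates adapted to the Bruhat decomposition; on the projected Richardson side one uses the identification $\mathcal O_{\XX_{w,P}}(\XX^{s_k}_{w,P})\cong\mathcal L_{\omega_k}|_{\XX_{w,P}}$ from \cref{p:PicardGroup} for $k\in I^P_w$, together with a separate direct check at a generic point of $\XX^{s_j}_{w,P}$ for the non-Cartier indices $j\in I_P\cap I^B_w$. Once these local multiplicities are pinned down, summing yields $\operatorname{div}(\omega)=-\partial\XX_{w,P}$, and hence $-K_{\XX_{w,P}}=\partial\XX_{w,P}$ as Weil divisor classes on the normal variety $\XX_{w,P}$.
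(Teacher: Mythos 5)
Your overall strategy—identifying the claimed divisor as the boundary of the open projected Richardson variety $\XX^{\circ}_{w,P}$ and constructing a rational top form with simple poles along it—is genuinely different from the paper's. The paper does not construct such a form at all; it cites \cite[Lemma 5.4 and Corollary 3.2]{KLS} for the statement that the sum of all boundary projected Richardson hypersurfaces is anticanonical, and then devotes its entire effort to the \emph{combinatorial} identification: showing that these boundary hypersurfaces are precisely $\XX^{s_j}_{w,P}$ for $j\in I^B_w$ together with $\XX_{ws_{\checketa},P}$ for $\checketa\in\RpwP$, using the $P$-Bruhat order $\leq_P$ and an explicit chain construction borrowed from \cite{LSZ}. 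Your plan is in effect to re-derive the KLS input from scratch, which is much more work than the paper takes on.

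There is also a concrete error in your enumeration of boundary components on the Richardson side. You assert that the components in family (ii) are those obtained by intersecting $\XX_{w,P}$ with the complement of the opposite open cell $B_-P/P\subset G/P$. That intersection gives only the Richardson divisors $\XX^{s_k}_{w,P}$ for $k\in I^P_w$, i.e.\ those indexed by $k\in I^P$; see \cref{r:Richardson}. But the proposition also involves $\XX^{s_j}_{w,P}$ for $j\in I_P\cap I^B_w$, and these do \emph{not} arise as intersections with $G/P$ divisors at all. They arise because $\pi_P^{-1}(B_-P/P)=B_-P/B$ strictly contains $B_-B/B$ when $I_P\ne\emptyset$: a point of $\XX_{w,P}\cap B_-P/P\cap BwP/P$ need not lie in $\XX^{\circ}_{w,P}=\pi_P\bigl((BwB\cap B_-B)/B\bigr)$. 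So the boundary of $\XX^{\circ}_{w,P}$ is strictly larger than $\bigl(\XX_{w,P}\setminus BwP/P\bigr)\cup\bigl(\XX_{w,P}\setminus B_-P/P\bigr)$. Relatedly, your remark that ``a short check shows'' each of these appears as a reduced codimension-one component is exactly where the paper spends its main effort: showing that $s_j\leq_P w$ for $j\in I_w^B\setminus I^P_w$, via an explicit chain of $P$-Bruhat covers, is the nontrivial step, and your proposal does not address it. Finally, the multiplicity computation via a Bott--Samelson resolution is sketched but not carried out; if you want a self-contained route, that is where the real content lies, and simply outlining it leaves the proof incomplete.
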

\begin{proof} By \cite[Lemma 5.4]{KLS},  the sum of all projected Richardson hypersurface in $\XX_{w, P}=\XX_{w, P}^{\rm id}$ gives an anticanonical divisor of $\XX_{w, P}$. Hence, by \cite[Corollary 3.2]{KLS},
\[
-K_{\XX_{w,P}}=\sum_{s_j\leq_P w} \XX^{s_j}_{w,P} + \sum_{{\rm id}\leq_P x \prec  w}\XX_{x,P}^{\rm id}.
\]
Here $\leq_P$ denotes the $P$-Bruhat order, namely $u\leq_P v$ if and only if there is a chain
$u=u_1 \lessdot u_2\lessdot\cdots \lessdot u_m=v$ of Bruhat covers such that
$\pi(u_1)<\pi(u_2)<\cdots <\pi(u_m)$, where $\pi(u_m)\in W^P$ denotes the minimal length representative of the coset $u_m W_P$.  In the second sum, we notice $\XX_{x,P}^{\rm id}=\XX_{x,P}=\XX_{ws_\checketa,P}$ for $\checketa\in \RpwP$. In the first sum, by definition,  $s_j\leq_P w$ holds  only if $s_j\leq w$, namely $j\in I_w^B$.

  It remains to verify that $\XX^{s_j}_{w,P}$ does indeed have codimension $1$ in $\XX_{w,P}$. If $j\in I^P_w$ this is clear by \cref{r:Richardson}.
Now we assume $j\in I_w^B {\setminus I^P_w}$, and take a reduced expression $w=s_{i_1}\cdots s_{i_r}$ with $s_j$ appearing at the rightmost at $\ell$-th position. As in the proof of \cite[Lemma 3.8]{LSZ}, we set $u_1:=s_j$ and
$$u_m:=\begin{cases}s_{i_{r-m+1}}\cdots s_{i_{r-1}}s_{i_r}, &\mbox{if }r-\ell+1\leq m\leq r,\\
s_{i_\ell}s_{i_{r-m+2}}\cdots s_{i_{r-1}}s_{i_r},  &\mbox{if } 2\leq m\leq r-\ell.\end{cases}$$
By \cite[Lemma 3.6 (1)]{LSZ}, we have $u_m\in W^P$ for $r-\ell+1\leq m\leq r$, and $s_{i_\ell}u_m\in W^P$ for $2\leq m\leq r-\ell$.

Let $2\leq m \leq r-\ell$.
If $s_{i_\ell}u_m\neq u_m s_{i_\ell}$, then we have $u_m\in W^P$ by \cite[Lemma 3.6 (2)]{LSZ}. If the inequalities hold for all  such $m$, then we have $\pi(u_m)=u_m$, and notice $\pi(u_1)=\pi(s_j)={\rm id}$. Then $s_j\leq_P w$ by definition. Otherwise, we let $a$ denote the maximal index in $\{2, \cdots, r-\ell\}$ with the equality $s_{i_\ell}u_a= u_a s_{i_\ell}$. Let $v_k=s_{i_\ell}u_ks_{i_\ell}$ for $1\leq k\leq a$.  We have  $s_j=v_1\lessdot v_2\lessdot \cdots \lessdot v_a \lessdot u_{a+1}\lessdot \cdots \lessdot u_r$, and their projections give
${\rm id}\lessdot v_2s_{i_\ell}\lessdot \cdots \lessdot v_a s_{i_\ell} < u_{a+1}\lessdot u_{a+2}\lessdot \cdots \lessdot u_r$. Hence, we still have $s_j\leq_P w$.
\end{proof}
Now we characterise which Schubert varieties in $G/P$ are Gorenstein in the simply-laced setting, and which of those are Fano. We also describe the anticanonical line bundle for $\mathbb{Q}$-factorial Schubert varieties of arbitrary type, and  determine when it is ample. For Schubert varieties in $SL_n/B$, explicit combinatorial conditions on $w\in S_n$ characterising when $\XX_{s,B}$ is Gorenstein were given in \cite{WooYong}. The problem to find a characterisation in more general types was posed in \cite{WooYong} and again in \cite{WooYong23}.
\begin{defn}\label{d:dualbasis}
Assume $G$ is simply-laced, and $\mathcal B_{w,P}=\{\checkmu_w(k)\mid k\in I^P_w\}$ is the subset of $\RpwP$  constructed above. By \cref{p:BwP} we may consider $\mathcal B_{w,P}$ as a basis of $H_2(\XX_{w,P},\ZZ)$. Let
\[
\mathcal B^*_{w,P}:=\{\checkmu^*_w(k)\mid k\in I^P_w\} \subset H^2(\XX_{w,P},\ZZ)
\]
denote the dual basis. If $G$ is not simply-laced, but $\XX_{w,P}$ is $\QQ$-factorial, then set $\mathcal B_{w,P}:=\RpwP$. This gives a $\QQ$-basis of $H_2(\XX_{w,P},\QQ)$, see \cref{p:Qcal}, and in the factorial case, $\RpwP$ gives a $\ZZ$-basis of $H_2(\XX_{w,P},\ZZ)$. We again define $\mathcal B^*_{w,P}$ as the dual basis to $\RpwP$. Recall that we identify $H_2(\XX_{w,P},\ZZ)$ with $\mathcal Q_{w,P}$ and $H^2(\XX_{w,P},\ZZ)$ with $\mathcal P_{w,P}$, compare \cref{d:QPw}.
\end{defn}
\begin{defn}\label{d:matrix}
Consider a Schubert variety $\XX_{w,P}$, where $w\in W^P$. If $G$ is simply-laced, we associate to $\XX_{w,P}$ the following square matrix
\begin{eqnarray*}
M_{w,P}&:=&(\langle\omega_i,\checkmu_w(k)\rangle)_{k,i\in I^P_w},
\end{eqnarray*}
where the $\checkmu_w(k)$ are the elements of the set $\mathcal B_{w,P}$ constructed in \cref{p:BwP}. We also define
\begin{eqnarray*}
N_{w,P}&:=&(\langle \checkmu^*_w(k),\checkalpha_i\rangle)_{i,k\in I^P_w}.
\end{eqnarray*}
Equivalently, $N_{w,P}$ is the inverse matrix of $M_{w,P}$, which we note is invertible over $\ZZ$ by \cref{p:BwP}. The columns of $N_{w,P}$ are the coordinates of $\checkmu_w^*(k)$ in terms of the basis $\{\omega_i\mid i\in I^P_w\}$ of $\mathcal P_{w,P}$.
\end{defn}
\begin{remark} As a consequence of \cref{p:BwP}, we have that $M_{w,P}$ is  obtained from $M_{w,B}$ simply by deleting the rows and columns indexed by $j\in I_P$.
\end{remark}
\begin{defn}[Simply-laced case $\hat{\mathbf n}_{w,P}$]\label{d:nwP} We now use the basis $\mathcal B_{w,P}=\{\checkmu_w(k)\mid k\in I^P_w\}$ to define a vector $\hat {\mathbf n}_{w,P}=(\hat n_k)_{k}$ in $\ZZ^{I^P_w}$. To $\checkmu_w(k)$ associate its height $\operatorname{ht}(\checkmu_w(k))=\langle\rho,\checkmu_w(k)\rangle$, and let $\mathbf h=(\operatorname{ht}(\checkmu_w(k)))_{k\in I^P_w}$. Also let $\mathbf 1=(1)_{k\in I^P_w}$. Then
\begin{equation}\label{e:nhatP}
\hat {\mathbf n}_{w,P}:=N_{w,P}(\mathbf h+\mathbf 1).
\end{equation}
If $P=B$ the expression for $\hat{\mathbf n}_{w,P}$ simplifies. Namely, in this case we have $\sum_{i\in I^B_w}\langle\omega_i,\checketa\rangle=\langle\rho,\checketa\rangle$ for any $\checketa\in \RpwB$, using that $\rho=\sum_{i\in I}\omega_i$ and $\langle\omega_i,\checketa\rangle=0$ whenever $i\notin I^B_w$. Therefore $\mathbf h=M_{w,B}(\mathbf 1)$ and
\begin{equation}\label{e:nhatB}
\hat {\mathbf n}_{w,B}=N_{w,B}( M_{w,B}(\mathbf 1)+\mathbf 1)=\mathbf 1+N_{w,B}(\mathbf 1).
\end{equation}
Thus, for a Schubert variety in $G/B$, the entry $\hat n_i$ of $\hat {\mathbf n}_{w,B}$ is simply $1$ plus the sum of the row of $N_{w,B}$ indexed by $i$. We write $\mathbf n_{w,B}=(n_i)_{i\in I^B_w}$ for the vector $N_{w,B}(\mathbf 1)=\hat {\mathbf n}_{w,B}-\mathbf 1$.
\end{defn}

\begin{remark}
In general, for $\checketa\in \RpwP$ with $P\ne B$ there are two natural notions of `height'. One is the standard one used above, $\height(\checketa)=\langle\rho,\checketa\rangle$. But the more natural from the geometric perspective is $\height_P(\checketa):=\sum_{i\in I^P_w}\langle \omega_i,\checketa\rangle$. Namely, this one reflects the degree of the curve class in $H_2(\XX_{w,P})$ associated to $\checketa$ in the minimal projective embedding of $\XX_{w,P}$. We clearly have $\height_P(\checketa)\le\height(\checketa)$ and the vector $\mathbf h_P=(\height_P(\checkmu_w(k)))_{k\in I^P_w}$ is related to $M_{w,P}$ by taking row sums, $\mathbf h_P=M_{w,P}(\mathbf 1)$. If  the difference between these two versions of `height' for $\checkmu_w(k)\in \mathcal B_{w,P}$ is denoted $d_{k}=\langle\sum_{j\in I_P}\omega_j,\checkmu_w(k)\rangle$ and $\mathbf d=(d_k)_{k\in I^P_w}$, then the generalisation of \eqref{e:nhatB} using $\mathbf h=\mathbf h_P+\mathbf d$ is
\[
\hat {\mathbf n}_{w,P}=N_{w,P}( M_{w,P}(\mathbf 1)+\mathbf d+\mathbf 1)=\mathbf 1+N_{w,P}(\mathbf d +\mathbf 1).
\]
Note that $\mathbf d$ captures precisely the part of the height of $\checketa_w(k)$ which is also affected by the $P$-adaptation process, compare \cref{l:M}.
\end{remark}

\begin{prop}\label{p:anticanon}  Suppose $G$ is of simply-laced type.
The following conditions are equivalent
\begin{enumerate}
\item $\XX_{w,P}$ is Gorenstein.
\item $\left(\sum_{k\in I^P_w}\hat n_{k}\langle\omega_k,\checketa\rangle\right)-\operatorname{ht}(\checketa)= 1$ for all $\checketa\in \RpwP\setminus\mathcal B_{w,P}$.
\end{enumerate}
If $\XX_{w,P}$ is Gorenstein, then the anticanonical line bundle on $\XX_{w,P}$ is given by
$\mathcal L_{\lambda}|_{\XX_{w,P}}$,
where $\lambda=\sum_{i\in I^P_w}\hat n_i\omega_i$. In particular, $\lambda$ represents the first Chern class $c_1(\XX_{w,P})$.

Moreover, in the $P=B$ case, the condition (2) can be replaced by
\begin{enumerate}
\item[(2')] $\sum_{i\in I^B_w}  n_{i}\langle\omega_i,\checketa\rangle= 1$ for all $\checketa\in \RpwB\setminus\mathcal B_{w,B}$,
\end{enumerate}
where $n_i=\hat n_i-1$ as in \cref{d:nwP}.
\end{prop}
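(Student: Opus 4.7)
The plan is to begin from the explicit anticanonical divisor in \cref{p:BasicKX}, rewrite it as a Weil divisor class in the basis $\{[\XX_{ws_\checketa,P}]\mid\checketa\in\RpwP\}$, and then test when this class lies in the Cartier subgroup of the Weil class group. The key intermediate computation is the class of each projected Richardson divisor $\XX^{s_j}_{w,P}$ for $j\in I^B_w$.

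\textbf{Step 1: class of the projected Richardson divisors.} For every $j\in I^B_w$, I would establish
\[
[\XX^{s_j}_{w,P}]=\sum_{\checketa\in\RpwP}\langle\omega_j,\checketa\rangle\,[\XX_{ws_\checketa,P}].
\]
For $j\in I^P_w$ this is precisely \eqref{e:Dihom}. For $j\in I_P\cap I^B_w$, the same formula follows by computing $(\pi_P)_*[\XX^{s_j}_{w,B}]$: by \eqref{e:DiviaSchubCalc} in $G/B$, and noting that $(\pi_P)_*[\XX_{ws_\checketa,B}]=[\XX_{ws_\checketa,P}]$ when $\checketa\in\RpwP$ and vanishes otherwise for dimension reasons, while on the left-hand side $(\pi_P)_*[\XX^{s_j}_{w,B}]=[\XX^{s_j}_{w,P}]$ because $\pi_P|_{\XX_{w,B}}$ is birational (as $w\in W^P$) and meets $\XX^{s_j}_{w,B}$ in a dense open subset of its isomorphism locus. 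Substituting into \cref{p:BasicKX} and using that $\sum_{j\in I^B_w}\langle\omega_j,\checketa\rangle=\operatorname{ht}(\checketa)$ for any $\checketa\in\check R^+(w)$ (the support of $\checketa$ lies in $I^B_w$), one obtains the Weil class expansion
\[
[-K_{\XX_{w,P}}]=\sum_{\checketa\in\RpwP}(\operatorname{ht}(\checketa)+1)\,[\XX_{ws_\checketa,P}].
\]

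\textbf{Step 2: Gorenstein criterion via linear algebra.} Since $\XX_{w,P}$ is normal and Cohen-Macaulay, it is Gorenstein iff this class lies in the $\ZZ$-span of $[D_k]=\sum_{\checketa\in\RpwP}\langle\omega_k,\checketa\rangle[\XX_{ws_\checketa,P}]$ for $k\in I^P_w$, i.e. iff there exists $\mathbf m=(m_k)\in\ZZ^{I^P_w}$ with $\operatorname{ht}(\checketa)+1=\sum_{k}m_k\langle\omega_k,\checketa\rangle$ for all $\checketa\in\RpwP$. Restricting to the rows indexed by $\mathcal B_{w,P}\subset\RpwP$ reads $M_{w,P}\mathbf m=\mathbf h+\mathbf 1$; by \cref{p:BwP}(3), $M_{w,P}$ is unipotent lower-triangular and hence invertible over $\ZZ$, so the unique (rational) solution is $\mathbf m=N_{w,P}(\mathbf h+\mathbf 1)=\hat{\mathbf n}_{w,P}\in\ZZ^{I^P_w}$. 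The remaining equations, corresponding to $\checketa\in\RpwP\setminus\mathcal B_{w,P}$, are satisfied by this $\mathbf m$ precisely under condition $(2)$, establishing $(1)\iff(2)$. When Gorenstein, $[-K_{\XX_{w,P}}]=\sum_k\hat n_k[D_k]$, so $-K_{\XX_{w,P}}$ is represented by the line bundle $\mathcal L_\lambda|_{\XX_{w,P}}$ with $\lambda=\sum_k\hat n_k\omega_k$, and via the identification $\Pic(\XX_{w,P})\cong\Pcal_{w,P}\subset H^2(\XX_{w,P},\ZZ)$ from \cref{r:PicDiv} the class $\lambda$ represents $c_1(\XX_{w,P})$. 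For $P=B$, inserting $\hat n_i=n_i+1$ and $\operatorname{ht}(\checketa)=\sum_{i\in I^B_w}\langle\omega_i,\checketa\rangle$ into $(2)$ collapses the height terms and yields exactly $\sum_{i\in I^B_w}n_i\langle\omega_i,\checketa\rangle=1$, which is $(2')$.

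\textbf{Main obstacle.} The subtlest point is the Schubert-calculus formula for $[\XX^{s_j}_{w,P}]$ when $j\in I_P\cap I^B_w$: such a divisor is not Cartier and is not even a Richardson variety in $G/P$ (cf.\ \cref{r:Richardson}), so one cannot read the formula off from \cref{p:PicardGroup}. It is the birationality of $\pi_P|_{\XX_{w,B}}\to\XX_{w,P}$, combined with the fact that the isomorphism locus meets every divisor in $\XX_{w,B}$ densely (its complement has codimension $\ge 2$ in the normal variety $\XX_{w,P}$), that lets us transport $[\XX^{s_j}_{w,B}]$ from $G/B$ to the desired class in $\XX_{w,P}$. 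Once this is set up, everything else reduces to the linear algebra controlled by the explicit invertibility of $M_{w,P}$ from \cref{p:BwP}.
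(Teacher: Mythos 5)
Your proof proposal is correct and follows essentially the same route as the paper's own proof (which treats \cref{p:anticanon} and \cref{p:anticanon2} together): push forward \eqref{e:DiviaSchubCalc} along $\pi_P$ to obtain the Weil class of each $\XX^{s_j}_{w,P}$ for all $j\in I^B_w$, sum to get $[-K_{\XX_{w,P}}]=\sum_{\checketa\in\RpwP}(\operatorname{ht}(\checketa)+1)[\XX_{ws_\checketa,P}]$, make the ansatz $\mathcal O(-K)=\mathcal L_\mu|_{\XX_{w,P}}$, solve the rows indexed by $\mathcal B_{w,P}$ via $M_{w,P}\mathbf m=\mathbf h+\mathbf 1$ using the unipotent triangularity of $M_{w,P}$ from \cref{p:BwP}, and read off (2) from the remaining rows. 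One small caution: the fact that $\XX^{s_j}_{w,P}$ is actually a divisor for $j\in I_P\cap I^B_w$, and hence that $(\pi_P)_*[\XX^{s_j}_{w,B}]=[\XX^{s_j}_{w,P}]$ rather than $0$, is not a formal consequence of $\pi_P|_{\XX_{w,B}}$ being birational onto a normal target (birational projective morphisms with normal source and target can still contract divisors, as in a blowup); the paper establishes the codimension-one claim separately in the proof of \cref{p:BasicKX} via the $P$-Bruhat order argument, and you should cite that explicitly rather than rely on the "complement has codimension $\ge 2$" heuristic.
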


\begin{remark}\label{r:canonical} For general $w\in W^P$, the numbers $\hat n_i$ are non-canonical, depending on the construction of $\mathcal{B}_{w, P}$. However, whenever $\XX_{w,P}$ is Gorenstein the sum $\sum_{i\in I^P_w}\hat n_k\omega_k$ will not depend on the choices made in the construction of the basis $\mathcal B_{w,P}$ and $\hat{\mathbf n}_{w,P}$. See also \cref{r:PisB}. The condition (2') is the condition \eqref{e:IntroBGorenstein} from the introduction written out in coordinates.
\end{remark}

For $\QQ$-factorial $\XX_{w,P}$ of general Lie type we may describe the Gorenstein condition in the similar terms.

\begin{defn}[$\mathbb Q$-factorial case]\label{d:matrixQfac}
Suppose $\XX_{w,P}$ is $\QQ$-factorial, now of general type. We have a square, integer matrix associated to $\XX_{w,P}$,
\[
M_{w,P}:=\left(\langle \omega_k,\checketa\rangle\right)_{\checketa\in \mathcal B_{w,P},k\in I^P_w},
\]
which is invertible over $\QQ$, with $\mathcal B_{w,P}=\RpwP$ as in \cref{d:dualbasis}. We denote by
\[
N_{w,P}=(n_{k,\checketa})_{k\in I^P_w,\checketa\in\RpwP}
\]
the inverse of $M_{w,P}$ (over $\QQ$ in general). In simply-laced type we may index the elements of $\RpwP$ by $k\in I^P_w$ as in \cref{p:BwP} so that $M_{w,P}$ and $N_{w,P}$ agree with the matrices from \cref{d:matrix} in that case. We let $\hat {\mathbf n}_{w,P}:=N_{w,P}(\mathbf h+\mathbf 1)$, where $\mathbf h$ is the vector of heights $(\operatorname{ht}(\checketa))_\checketa$ and $\mathbf 1=(1,\dotsc, 1)$, as in \cref{d:nwP}. Here this formula defines an element of $\QQ^{\RpwP}$.
\end{defn}
\begin{prop}\label{p:anticanon2}
Suppose $\XX_{w,P}$ is $\QQ$-factorial (of arbitrary Lie type), and let $\hat{\mathbf n}_{w,P}$ be the vector from \cref{d:matrixQfac} with entries $\hat n_{k}\in\QQ$. Then the anticanonical class is given by
\[
[-K_{\XX_{w,P}}]=\sum_{k\in I^P_w} \hat n_k[\XX^{s_k}_{w,P}],
\]
and thus $\XX_{w,P}$ is Gorenstein if and only if $\hat n_k\in\ZZ$ for all $k\in I^P_w$.
\end{prop}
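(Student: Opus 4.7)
The plan is to compute the coefficient of each Schubert divisor class $[\XX_{ws_\checketa,P}]$, $\checketa\in\RpwP$, in $[-K_{\XX_{w,P}}]$ using the explicit expression from \cref{p:BasicKX}, and then convert the result to the Cartier basis $\{[D_k]\}_{k\in I^P_w}$ by inverting $M_{w,P}$. The $\QQ$-factorial hypothesis enters in two places: it makes $M_{w,P}$ square and $\QQ$-invertible by \cref{p:Qcal}(2), and it guarantees that every Weil divisor class is $\QQ$-Cartier, so that $[-K_{\XX_{w,P}}]$ really does admit an expansion in the $[D_k]$.

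First I would establish the Weil-class expansion
\[
[\XX^{s_j}_{w,P}]=\sum_{\checketa\in\RpwP}\langle\omega_j,\checketa\rangle\,[\XX_{ws_\checketa,P}]
\]
for every $j\in I^B_w$. For $j\in I^P_w$ this is the Cartier formula~\eqref{e:Dihom}. For $j\in I_P\cap I^B_w$ the same identity follows by pushing the Chevalley identity~\eqref{e:DiviaSchubCalc} on $G/B$ forward along $\pi_P$: the terms with $ws_\checketa\notin W^P$ die for dimension reasons, the surviving $[\XX_{ws_\checketa,B}]$ with $\checketa\in\RpwP$ push forward to $[\XX_{ws_\checketa,P}]$, and $(\pi_P)_*[\XX^{s_j}_{w,B}]=[\XX^{s_j}_{w,P}]$ because the projection is birational onto a divisor (compare the dimension and $P$-Bruhat analysis in the proof of \cref{p:BasicKX}). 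Summing over $j\in I^B_w$ and using that $\langle\omega_j,\checketa\rangle=0$ for $j\notin I^B_w$ (any $\checketa\in\check R^+(w)$ is supported on $I^B_w$), one gets $\sum_{j\in I^B_w}\langle\omega_j,\checketa\rangle=\langle\rho,\checketa\rangle=\height(\checketa)$, hence
\[
[-K_{\XX_{w,P}}]=\sum_{\checketa\in\RpwP}\bigl(\height(\checketa)+1\bigr)[\XX_{ws_\checketa,P}].
\]

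Next, I would read the preceding equality as the matrix equation $M_{w,P}\,\hat{\mathbf n}=\mathbf h+\mathbf 1$ for the coefficient vector $\hat{\mathbf n}=(\hat n_k)_{k\in I^P_w}$ that expresses $[-K_{\XX_{w,P}}]$ in terms of $\{[D_k]\}$ via~\eqref{e:Dihom}; applying $N_{w,P}=M_{w,P}^{-1}$ yields $\hat{\mathbf n}=N_{w,P}(\mathbf h+\mathbf 1)$, matching \cref{d:matrixQfac} and proving the first claim. For the Gorenstein criterion I would invoke \cref{r:PicDiv}, which identifies $\Pic(\XX_{w,P})$ with the free sublattice of $H_{2\ell(w)-2}(\XX_{w,P},\ZZ)$ generated by the $[D_k]$ under the injective divisor class map. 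Since the $[D_k]$ are $\QQ$-linearly independent by \cref{p:Qcal}(1), any $\QQ$-Cartier Weil class has a unique expansion in the $[D_k]$ with $\QQ$-coefficients, and is Cartier if and only if all these coefficients are integers. Applied to $[-K_{\XX_{w,P}}]=\sum_k\hat n_k[D_k]$ this gives exactly that $\XX_{w,P}$ is Gorenstein if and only if every $\hat n_k\in\ZZ$.

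The main obstacle is the extension of~\eqref{e:Dihom} to the Weil classes $[\XX^{s_j}_{w,P}]$ for $j\in I_P\cap I^B_w$: one must know that $\pi_P\colon\XX^{s_j}_{w,B}\to\XX^{s_j}_{w,P}$ is birational onto a codimension-one image, so that the pushforward carries no extra multiplicity. Both facts are controlled by the $P$-Bruhat order analysis in the proof of \cref{p:BasicKX}, and once they are in hand the remainder of the argument is pure linear algebra on the $I^P_w\times\RpwP$ pairing matrix.
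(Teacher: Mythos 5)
Your proposal is correct and follows essentially the same route as the paper's proof: push the Chevalley identity from $G/B$ along $\pi_P$ to get the Weil-class expansion of $[\XX^{s_j}_{w,P}]$ for every $j\in I^B_w$, sum with the Schubert divisors from \cref{p:BasicKX} to obtain $[-K_{\XX_{w,P}}]=\sum_{\checketa\in\RpwP}(\height(\checketa)+1)[\XX_{ws_\checketa,P}]$, and then solve the resulting square system $M_{w,P}\hat{\mathbf n}=\mathbf h+\mathbf 1$, with the Gorenstein criterion coming from the $[D_k]$ being a basis of the free group $\Pic(\XX_{w,P})$ inside the Weil class group. The only cosmetic difference is that the paper phrases the inversion step as an ansatz $\mathcal O(-K_{\XX_{w,P}})=\mathcal L_\mu|_{\XX_{w,P}}$ and solves for $\mu$, whereas you work directly with the coefficient vector in the $[D_k]$-basis, but the two are equivalent.
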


\begin{remark}\label{r:PisB}
In the situation of \cref{p:anticanon2} or \cref{p:anticanon}, but with $P=B$, we let $n_i=\sum_{\checketa\in \RpwB}n_{i,\checketa}$ be the row sum of $N_{w,B}$ for the row labeled $i$, as in \cref{d:nwP}. If $\XX_{w,P}$ is $\QQ$-Gorenstein then the anticanonical class of $\XX_{w,B}$ is given by
\[
[-K_{\XX_{w,B}}]=\sum_{i\in I^B_w} (n_i+1)[\XX^{s_i}_{w}].
\]
Moreover, $\XX_{w,B}$ is Gorenstein if and only if the row sums $n_i$ of $N_{w,B}$ lie in $\ZZ$ for all $i\in I^B_w$. In this case we also have a natural description of the first Chern class of $\XX_{w,B}$ under the identification of $H^2(\XX_{B,w},\ZZ)$ with $\mathcal P_{w,B}$. Namely, $\mathcal P_{w,B}$ has two bases, $\{\omega_i\mid i\in I_w\}$ and the dual basis to $\mathcal B_{w,B}$ that we may denote as $\{\checketa^*_w(i)\mid i\in I_w^B\}$. The first Chern class is equal to the sum of these two bases, $c_1(\XX_{w,B})=\sum_{i\in I_w}\omega_i+\sum_{i\in I_w}\checketa^*_w(i)$. Note that here, despite the $\checketa^*_w(i)$ being non-canonical, their sum is canonical. For example the Schubert variety in $SL_5/B$ associated to the permutation 53142 discussed in \cref{ex:A4nonunique} has two different possible choices for $\mathcal B^*_{w,B}$. But the sum of the dual basis elements is the same in either choice. This Schubert variety is indeed Gorenstein as can be verified using \cref{p:anticanon}, but also independently using \cite{WooYong}.
\color{blue}
\end{remark}

The two propositions, \cref{p:anticanon} and \cref{p:anticanon2}, can now be proved together.

\begin{proof}[{Proof of \cref{p:anticanon} and \cref{p:anticanon2}}] First, let us express the anticanonical divisor class $[-K_{\XX_{w,P}}]$ in terms of the Schubert basis. We have in $G/B$ that for any $j\in I_w^B$,
\[
[\XX^{s_j}_{w,B}]=[\XX^{s_j,B}].[\XX_{w,B}]=
\sum_{\checketa\in \RpwB} \langle \omega_j,\checketa\rangle [\XX_{ws_\checketa ,B}],
\]
using the Chevalley formula as in the proof of  \cref{p:PicardGroup}, see \eqref{e:DiviaSchubCalc}. Thus, for the projected Richardson variety, $\XX^{s_j}_{w,P}=\pi(\XX^{s_j}_{w,B})$ we have in $H_{2\ell(w)-2}(\XX_{w,P},\ZZ)$ that
\[
[\XX^{s_j}_{w,P}]=\pi_*[\XX^{s_j}_{w,B}]=
\sum_{\checketa\in \RpwP} \langle \omega_j,\checketa\rangle [\XX_{ws_\checketa ,P}],
\]
using that $\pi_*[\XX_{ws_\checketa,B}]=0$ if $\checketa\in \RpwB\setminus\RpwP$ for dimension reasons. Note however that unlike in \cref{p:PicardGroup}, the divisor $\XX^{s_j}_{w,P}$ may not be a Richardson variety in $\XX_{w,P}$ (or Cartier), since $j$ is now not assumed to lie in $I^P_w$.

Recall that $[-K_{\XX_{w,P}}]$ involves the sum of all $[\XX^{s_j}_{w,P}]$ for $j\in I^B_w$. We have that
$\sum_{j\in I^B_w}\langle\omega_j,\checketa\rangle=\langle\rho,\checketa\rangle=\operatorname{ht}(\checketa)$, for any $\checketa\in \RpwB$. Therefore
\begin{equation}\label{e:-KXproof}
[-K_{\XX_{w,P}}]=\sum_{j\in I^B_w}[\XX^{s_j}_{w,P}] +\sum_{\checketa\in \RpwP}[\XX_{ws_\checketa,P}]=\sum_{\checketa\in \RpwB} (\operatorname{ht}(\checketa) +1) [\XX_{ws_\checketa ,P}].
\end{equation}
We have that $\XX_{w,P}$ is Gorenstein if and only if $-K_{\XX_{w,P}}$ is Cartier.
Assuming $-K_{\XX_{w,P}}$ is Cartier we make an ansatz for the associated line bundle.
Namely we set $\mathcal O(-K_{\XX_{w,P}})=\mathcal L_{\mu}|_{\XX_{w,P}}$ where $\mu=\sum_{k\in I^P_w} m_k\omega_k$ is to be determined. Then we compute the associated divisor class as in the proof of \cref{p:PicardGroup} and separate out the summands corresponding to $\checkmu_w(k)\in \mathcal B_{w,P}$,
\begin{equation*}
\div(\mathcal L_{\mu}|_{\XX_{w,P}})=
\sum_{\checketa\in \RpwP}\langle \mu, \checketa \rangle[\XX_{w s_\checketa,P}] =
\sum_{k\in I^P_{w}}\langle \mu, \checkmu_w(k) \rangle[\XX_{w s_{\checkmu_w(k)},P}]+\sum_{\checketa\in \RpwP\setminus\mathcal B_{w,P}}\langle \mu, \checketa \rangle[\XX_{w s_\checketa,P}].
\end{equation*}
Comparing the formula for $\div(\mathcal L_{\mu}|_{\XX_{w,P}})$ with the one for $-K_{\XX_{w,P}}$ in \eqref{e:-KXproof} and projecting onto the span of the Schubert basis elements $[\XX_{w s_{\checkmu_w(k)},P}]$ corresponding the $\checkmu_w(k)\in \mathcal B_{w,P}$,
 we have
\[
\sum_{k\in I^P_{w}}\langle \mu, \checkmu_w(k) \rangle[\XX_{w s_{\checkmu_w(k)},P}]=
\sum_{k\in I^P_w} (\operatorname{ht}(\checkmu_w(k)) +1) [\XX_{ws_{\checkmu_w(k)} ,P}].
\]
In terms of the coefficient vector $\mathbf m=(m_k)_{k\in I^P_{w}}$ of $\mu=\sum_{k\in I^P_w} m_k\omega_k$, this equation can be rephrased as $M_{w,P}(\mathbf m)=\mathbf h+\mathbf 1$, compare \cref{d:nwP}. Thus $\mu$ is uniquely determined by just these summands, and has coefficients given by $\mathbf m=N_{w,P}(\mathbf h+\mathbf 1)=\hat{\mathbf n}_{w,P}=(\hat n_k)_k$.

The condition for $\mathcal L_{\mu}|_{\XX_{w,P}}$ to be the anticanonical line bundle is now that for any remaining $\checketa\in \RpwP\setminus\mathcal B_{w,P}$ we have
\begin{equation}\label{e:GorensteinCrit1B}
\sum_{k\in I^P_w}\hat n_{k}\langle\omega_k,\checketa\rangle= \operatorname{ht}(\checketa)+1.
\end{equation}
This translates to the condition (2) in \cref{p:anticanon}. Moreover, condition (2) is equivalent to (2') in the $P=B$-case by \eqref{e:nhatB}. The formula for $[-K_{\XX_{w,P}}]$ and the characterisation of the Gorenstein property in \cref{p:anticanon} and \cref{p:anticanon2} also follow.
 \end{proof}

\begin{cor}\label{c:Fano}
Let $\XX_{w,P}$ be either a factorial Schubert variety of arbitrary Lie type or a simply-laced Gorenstein Schubert variety, and let
$N_{w,P}$ 
and $\hat{\mathbf n}_{w,P}=(\hat n_k)_{k\in I^P_w}$ be as in \cref{d:matrix}, \cref{d:nwP} and \cref{d:matrixQfac}.
The following are equivalent.
\begin{enumerate}
\item $\XX_{w,P}$ is a Gorenstein Fano variety.
\item $\hat{\mathbf n}_{w,P}\in \ZZ_{>0}^{I^P_w}$.
\end{enumerate}
Moreover if $P=B$ then $\XX_{w,B}$ is Fano if and only if the row-sum vector $N_{w,B}(\mathbf 1)\ge 0$.
\end{cor}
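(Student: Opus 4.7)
The plan is to combine the explicit description of $-K_{\XX_{w,P}}$ from \cref{p:anticanon} and \cref{p:anticanon2} with a standard analysis of when the restriction of a line bundle $\mathcal L_\lambda$ on $G/P$ is ample on $\XX_{w,P}$. First, under either hypothesis $\XX_{w,P}$ is automatically Gorenstein (factorial implies Gorenstein), and $\hat{\mathbf n}_{w,P}$ is automatically an integer vector, since $N_{w,P}$ has entries in $\ZZ$ by \cref{p:Qcal}(3) in the factorial case and by \cref{p:BwP} in the simply-laced Gorenstein case. By \cref{p:anticanon} and \cref{p:anticanon2}, the anticanonical line bundle then equals
\[
\mathcal O_{\XX_{w,P}}(-K_{\XX_{w,P}})=\mathcal L_\lambda|_{\XX_{w,P}},\qquad \lambda=\sum_{k\in I^P_w}\hat n_k\omega_k,
\]
so the equivalence $(1)\Leftrightarrow(2)$ reduces to showing that $\mathcal L_\lambda|_{\XX_{w,P}}$ is ample if and only if $\hat n_k>0$ for every $k\in I^P_w$.

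For the $(\Leftarrow)$ direction, I would make $\lambda$ strictly dominant on $G/P$ without altering its restriction to $\XX_{w,P}$. Recall from the proof of \cref{p:PicardGroup} that whenever $s_k\not\le w$, which happens precisely for $k\in I^P\setminus I^P_w$, the line bundle $\mathcal L_{\omega_k}|_{\XX_{w,P}}$ is trivial. Setting
\[
\lambda':=\lambda+\sum_{k\in I^P\setminus I^P_w}\omega_k
\]
therefore yields $\mathcal L_{\lambda'}|_{\XX_{w,P}}=\mathcal L_\lambda|_{\XX_{w,P}}$. But now $\langle\lambda',\checkalpha_k\rangle>0$ for every $k\in I^P$, so $\mathcal L_{\lambda'}$ is ample on $G/P$, and hence its restriction to the closed subvariety $\XX_{w,P}$ is also ample.

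For the $(\Rightarrow)$ direction I would test ampleness against Schubert lines. For each $k\in I^P_w$ the one-dimensional Schubert variety $\XX_{s_k,P}\cong \PP^1$ lies inside $\XX_{w,P}$, and the degree computation in \cite[Lemma 3.2]{FultonWoodward} (also invoked in the proof of \cref{p:PicardGroup}) gives
\[
\deg\bigl(\mathcal L_\lambda|_{\XX_{s_k,P}}\bigr)=\langle\lambda,\checkalpha_k\rangle=\hat n_k,
\]
using $\langle\omega_i,\checkalpha_k\rangle=\delta_{ik}$. If $\mathcal L_\lambda|_{\XX_{w,P}}$ is ample, its restriction to $\XX_{s_k,P}$ is ample, forcing $\hat n_k>0$. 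Finally, for the moreover statement \eqref{e:nhatB} gives $\hat n_i=n_i+1$ with $n_i\in\ZZ$, so $\hat n_i>0\Leftrightarrow n_i\ge 0$, and hence positivity of $\hat{\mathbf n}_{w,B}$ is equivalent to $N_{w,B}(\mathbf 1)\ge 0$.

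The only genuinely non-trivial step is the $(\Leftarrow)$ ampleness argument, where one exploits the triviality of $\mathcal L_{\omega_k}|_{\XX_{w,P}}$ for $k\in I^P\setminus I^P_w$ to push $\lambda$ into the ample cone of $G/P$; everything else is a direct consequence of results already in the paper.
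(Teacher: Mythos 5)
Your proof is correct and follows essentially the same line as the paper's: express the anticanonical bundle as $\mathcal L_\lambda|_{\XX_{w,P}}$ with $\lambda=\sum_{k\in I^P_w}\hat n_k\omega_k$, then characterise ampleness of this restriction by positivity of the pairings against the $\checkalpha_k$. The one place you diverge is how you handle the case when $w$ does not have full support: the paper dispenses with it by replacing $G$ with the relevant Levi subgroup (implicitly identifying $\XX_{w,P}$ with a Schubert variety in the Levi's flag variety), whereas you instead replace $\lambda$ by $\lambda'=\lambda+\sum_{k\in I^P\setminus I^P_w}\omega_k$, noting that $\mathcal L_{\omega_k}|_{\XX_{w,P}}$ is trivial for $k\in I^P\setminus I^P_w$ so the restriction is unchanged, while $\lambda'$ is strictly dominant on the original $G/P$. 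Your version avoids the unstated Levi identification and keeps everything inside one ambient flag variety, which is a small but genuine cleanup; both establish the same equivalence, and your handling of the $(\Rightarrow)$ direction and the $P=B$ ``moreover'' clause via \eqref{e:nhatB} match the paper.
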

\begin{proof}
By \cref{p:anticanon} and \cref{p:anticanon2} the anticanonical line bundle is $\mathcal L_{\lambda}|_{\XX_{w,P}}$ with $\lambda=\sum_{k\in I_w^P} \hat n_k\omega_k$. Let us assume that $w$ has full support, so that $I_w^P=I^P$. Otherwise we may replace $G$ with a Levi subgroup. If all $\hat n_k>0$ then $\mathcal L_{\lambda}$ is ample on $G/P$, and therefore its restriction $\mathcal L_{\lambda}|_{\XX_{w,P}}$ is also ample. Conversely, if  $\mathcal L_{\lambda}|_{\XX_{w,P}}$ is ample, then for any $k\in I^P$ the restriction to $\XX_{s_k,P}\subset \XX_{w,P}$ is ample (and equals $\mathcal O_{\XX_{s_k,P}}(\hat n_k)$). This implies $\hat n_k>0$. The characterisation in the $P=B$ case follows from the second half of \cref{d:nwP}.
\end{proof}

We note that \textbf{\cref{mt:Gorenstein}} and \textbf{\cref{mt:factorialFano}} are the combination of \cref{p:anticanon},  \cref{p:anticanon2} and \cref{c:Fano}.

\section{Examples}\label{s:Examples}
Recall that we simply denote a coroot $\checkalpha_{i_1}+\checkalpha_{i_2}+\cdots+\checkalpha_{i_k}$ as  $\checkalpha_{i_1i_2\ldots i_k}$.
\begin{example}\label{ex:Bw32Fano} Consider $G/B$ of type $A_4$ and let $w=s_3 s_4s_1s_2s_3$. Below we show that the (singular) Schubert variety $\XX_{w, B}$ in $G/B$ is  Gorenstein Fano but is not  factorial.

We have that every element of $\check R^+(w)$ is indecomposable, so that
\[
\RpwB=\check R^+(w)=\{\checkalpha_3,\checkalpha_{23},\checkalpha_{123},\checkalpha_{34},\checkalpha_{234}\}.
\]
Meanwhile $I^B_w=I=[4]$ and $\Pic(\XX_{w, B})\cong \ZZ^4$. So   $\XX_{w, B}$ is not factorial by Theorem \ref{mt:factorialgen}.

We now construct the subset $\mathcal B_{w,P}$ of $\RpwP$, by for each $i\in I^B_w$ selecting the rightmost occurrence of $s_i$ in any reduced expression of $w$ and letting $\checkmu_w(i)$ be the associated positive root. This gives us
\begin{equation}\label{e:Bw32}
\mathcal B_{w,B}=\{\checkmu_w(3)=\checkalpha_3,\checkmu_w(2)=\checkalpha_{23},\checkmu_w(1)=\checkalpha_{123},\checkmu_w(4)=\checkalpha_{34}\}
\end{equation}
We now order the indexing set as follows $I^B_{w}=\{3,2,1,4\}$. This ordering is compatible with the position of the root $\checkmu_w(k)$ in its optimal reflection ordering. The entries of the matrix $M_{w,B}=(\langle \omega_i,\checkmu_w(k)\rangle)_{k,i}$ can be read off row by row, directly from \eqref{e:Bw32}, and we have
\[
M_{w,B}=\begin{pmatrix}
1 & 0 & 0&0\\
1 &1 & 0& 0\\
1 & 1& 1 &0\\
1 & 0 &0 &1
\end{pmatrix},\qquad
N_{w,B}=\begin{pmatrix}
1 & 0 & 0&0\\
-1 &1 & 0& 0\\
0 & -1& 1 &0\\
-1 & 0 &0 &1
\end{pmatrix},
\]
where $N_{w, B}$ is the inverse of $M_{w,B}$. We may read off the dual basis $\mathcal B^*_{w,B}$ from the columns of $N_{w,B}$ (keeping in mind the ordering $3,2,1,4$ of $I^B_w$). Namely,
\[
\mathcal B^*_{w,B}=\{\checkmu^*_w(3)=\omega_3-\omega_2-\omega_4,\, \checkmu^*_w(2)=\omega_2-\omega_3,\, \checkmu^*_w(1)=\omega_{1},\, \checkmu^*_w(4)=\omega_{4}\}.
\]
We now have $\sum_{k}\checkmu_w^*(k)=\omega_3$ and we see that $\XX_{w,B}$ is Gorenstein, because
 $\RpwB\setminus \mathcal B_{w,B}=\{\checkalpha_{234}\}$ and  $\langle\omega_3,\checkalpha_{234}\rangle=1$.
We have $\hat{\mathbf n}_{w,B}=\mathbf 1+N_{w,B}(\mathbf 1)=(2,1,1,1)$ implying Fano by \cref{c:Fano}.
The anticanonical line bundle is given by
\[\mathcal L_{2\omega_3+\omega_2+\omega_1+\omega_4}|_{\XX_{w,B}}.
\]
\end{example}
\begin{example}\label{ex:Pw32Fano}
Consider $G/P$ of type $A_4$ with $I^P=\{1,2,3\}$ and $I_P=\{4\}$. Then we consider the same Weyl group element  $w=s_3s_4s_1s_2s_3$, now as element of $W^P$, and below we show that the Schubert variety $\XX_{w, P}$ is both Gorenstein Fano and factorial. Since $ws_3\notin W^P$ and $ws_3s_2s_3\notin W^P$, we must  remove the   elements $\checkalpha_3,\checkalpha_{23}$ from $\RpwB$ to obtain
\[
\RpwP=\{\checkalpha_{123},\checkalpha_{34},\checkalpha_{234}\}.
\]
The two   elements  $\checkmu^B_w(1)=\checkalpha_{123}$ and $\checkmu^B_w(4)=\checkalpha_{34}$ of $\RpwP$ appeared in  $\mathcal B_{w,B}$ in \cref{ex:Bw32Fano},  but the third didn't. We now demonstrate how the recursive construction of $\mathcal B_{w,P}$ from \cref{d:BwP} gives us $\check R^+_{w,P}$ (as it must).

We begin with $\checkmu=\checkmu^B_w(3)$ which does not have $ws_{\checkmu}(\checkalpha_4)>0$ (and therefore $ws_{\checkmu}\notin W^P$). The recursion tells us to replace $\checkmu^B_w(3)$ by  $s_{\checkmu^B_w(3)}(\checkalpha_4)=\checkmu^B_{w}(3)+\checkalpha_4=\checkalpha_{3}+\checkalpha_{4}$, and this now lies in $\RpwP$. So we set
\[
\checkmu^P_w(3):=\checkalpha_{3}+\checkalpha_{4}.
\]
Next we note that $\checkmu=\checkmu^B_w(2)=\checkalpha_{23}$ does not have $ws_{\checkmu}(\checkalpha_4)>0$, so we analogously replace it by
\[
\checkmu^P_w(2):=\checkalpha_{2}+\checkalpha_{3}+\checkalpha_{4}.
\]
The remaining element (indexed by $I^P_w$) is $\checkmu^B_w(1)$, which does lie in $\RpwP$ already, and so does not need to be replaced. Thus altogether we have
\begin{equation}\label{e:Pw32}
\mathcal B_{w,P}=\{\, \checkmu^P_w(3)=\checkalpha_{34},\ \checkmu^P_w(2)=\checkalpha_{234},\ \checkmu^P_w(1)=\checkalpha_{123}\, \}.
\end{equation}
This illustrates the recursive procedure.
Note that in this example the cardinality of $\RpwP$ was already minimal, equal to the rank $|I^P_w|=3$ of $\Pic(\XX_{w,P})$. Therefore, as a set we were bound to arrive at $\mathcal B_{w,P}=\RpwP$ in this case. Unlike the full flag variety example $\XX_{w,B}$ from above, the Schubert variety $\XX_{w,P}$ in $G/P$ is factorial. In fact, $\XX_{w,P}$ in $G/P$ is a smooth Schubert variety by \cite{GasharovReiner}.

We read off the entries of the matrix $M_{w,P}=(\langle \omega_i, \checkmu_w(k)\rangle)_{k,i}$ now with $i,k\in I^P_w$, row by row directly from \eqref{e:Pw32}. This gives
\[
M_{w,P}=\begin{pmatrix}
1 & 0 & 0\\
1 &1 & 0\\
1 & 1& 1
\end{pmatrix},\qquad
N_{w,P}=\begin{pmatrix}
1 & 0 & 0\\
-1 &1 & 0\\
0 & -1& 1
\end{pmatrix},
\]
where $N_{w, P}$ is the inverse of $M_{w,P}$. We may read off the dual basis $\mathcal B^*_{w,P}$ from the columns of $N_{w,P}$ (keeping in mind the ordering $3,2,1$ of $I^P_w$). Namely,
\[
\mathcal B^*_{w,P}=\{\checkmu^*_w(3)=\omega_3-\omega_2,\, \checkmu^*_w(2)=\omega_2-\omega_1,\, \checkmu^*_w(1)=\omega_{1}\}.
\]
We also have $\hat{\mathbf n}_{w,P}=N_{w,P}(\mathbf h+\mathbf 1)=(3,1,0)$. Using $\hat{\mathbf n}_{w,P}$ and \cref{p:anticanon}, or equivalently using \eqref{e:c1B}, we find that
$$
c_{1}(\XX_{w, P})=\sum_{k=1}^3(\mbox{ht}(\checkmu_w(k))+1)\mu_w^*(k)= 3(\omega_3-\omega_2)+4(\omega_2-\omega_1)+4\omega_1=3\omega_3+\omega_2.$$
Hence $\XX_{w,P}$ is semi-Fano with anticanonical line bundle given by
\[
\mathcal L_{3\omega_3+\omega_2}|_{\XX_{w,P}}.
\]
\end{example}
\begin{example} The following Schubert variety in $G/B$ of type $D_4$ is not Gorenstein.
\begin{center}\dynkin[edge length=.8cm,
root radius=.09cm,label]{D}{4}$\qquad {w=s_{1}s_3s_{4}s_2s_{1}s_3s_{4}s_2}$
\end{center}
We compute first the set $\RpwB$ to be
\[
\RpwB=\{\checkalpha_2,\checkalpha_{24},\checkalpha_{23},\checkalpha_{12},\checkalpha_1+2\checkalpha_2+\checkalpha_3+\checkalpha_4,\checkalpha_{123},\checkalpha_{124},\checkalpha_{234}\}.
\]
Within that we have the basis
\[
\mathcal B_{w,B}=\{\checkmu_w(2)=\checkalpha_2,\ \checkmu_w(1)=\checkalpha_{12},\ \checkmu_w(3)=\checkalpha_{23},\ \checkmu_w(4)=\checkalpha_{24}\}.
\]
The dual basis elements are given by
\[
\checkmu_w^*(2)=\omega_2-\omega_1-\omega_3-\omega_4,\ \  \checkmu_w^*(1)=\omega_1,\ \ \checkmu_w^*(3)=\omega_3,\ \  \checkmu_w^*(4)=\omega_4
\]
The sum $\sum_{k}\checkmu_w^*(k)=\omega_2$. However, the highest coweight, $\check\theta=\checkalpha_1+2\checkalpha_2+\checkalpha_3+\checkalpha_4$ is an element of $\RpwB\setminus \mathcal B_{w,B}$, and it has the property $\langle \sum_{k}\checkmu_w^*(k), \check\theta\rangle=\langle \omega_2,\check \theta\rangle=2$. Therefore the Schubert variety $\XX_{w,B}$ is not Gorenstein by the formulation \eqref{e:IntroBGorenstein} of Theorem \ref{mt:Gorenstein}. Note that the condition (2') from \cref{p:anticanon} amounts to the exact same calculation.
\end{example}
\begin{example}[Schubert surfaces and $3$-folds in $G/B$ for type $G_2$]\label{ex:G2surfacesB}
Let $G$ be of type $G_2$ with   $\alpha_1$ being the long simple root  as in \cref{ex:G2}. Recall that $M_{w,B}=\big(\langle\omega_i,\checketa\rangle\big)$ has columns corresponding to $\omega_i$ indexed by $I_w^B$, and rows corresponding to $\checketa$ from $\RpwB$. Both of these sets have cardinality $2$, where we are assuming $w$ is of length $2$ or $3$ in $W$ of type $G_2$. Recall that $N_{w, B}$ denotes the inverse of $M_{w, B}$.
\vskip.2cm
\noindent {1.) $\XX_{s_1s_2,B}$.} For $w={s_1s_2}$, we have $\RpwB=\{\checkalpha_2,\checkalpha_1+\checkalpha_2\}$. We order $I_{w}^{B}=\{2,1\}$.
\[
M_{s_1 s_2,B}=\begin{pmatrix} 1 &0\\
1&1
\end{pmatrix},\quad
N_{s_1 s_2,B}=\begin{pmatrix} 1 &0\\
-1&1
\end{pmatrix}, \quad
N_{s_1 s_2,B}\begin{pmatrix}1\\1\end{pmatrix}=\begin{pmatrix}1\\0\end{pmatrix}.\quad
\]
Thus, this Schubert variety is Fano. Keeping in mind the ordering of $I_w^B$, we have $n_2=1$ and $n_1=0$, and the first Chern class of the anticanonical line bundle is $c_1(\XX_{s_1s_2,B})=\omega_1+2\omega_2$.
\vskip.2cm
\noindent {2.) $\XX_{s_2s_1,B}$.} For $w={s_2s_1}$, we have $\RpwB=\{\checkalpha_1,3\checkalpha_1+\checkalpha_2\}$. We order $I_{w}^{B}=\{1,2\}$.
\[
M_{s_2 s_1,B}=\begin{pmatrix} 1 &0\\
3&1
\end{pmatrix},\quad
N_{s_2 s_1,B}=\begin{pmatrix} 1 &0\\
-3&1
\end{pmatrix},\quad
N_{s_2 s_1,B}\begin{pmatrix}1\\1\end{pmatrix}=\begin{pmatrix}1\\-2\end{pmatrix}.\quad
\]
Thus $\XX_{s_2s_1,B}$ is no longer Fano. Indeed, we have  $c_1(\XX_{s_2s_1,B})=2\omega_1-\omega_2$.

\vskip.2cm
\noindent {3.) $\XX_{s_1s_2s_1,B}$.}  For $w={s_1s_2s_1}$, we order $I_{w,B}=\{1,2\}$. In this case,
 \[
\check R^+_{s_{1}s_2s_{1},B}=\{\checkalpha_{1},\ 2\checkalpha_1+ \checkalpha_2 \}\subset \check R^+(s_{1}s_2s_{1})=\{\checkalpha_{1},\   3\checkalpha_1+\checkalpha_2,\ 2\checkalpha_1+ \checkalpha_2 \},
\]
\[
M_{s_1s_2 s_1,B}=\begin{pmatrix} 1 &0\\
2&1
\end{pmatrix},\quad
N_{s_1s_2 s_1,B}=\begin{pmatrix} 1 &0\\
-2&1
\end{pmatrix}, \quad
N_{s_1s_2 s_1,B}\begin{pmatrix}1\\1\end{pmatrix}=\begin{pmatrix}1\\-1\end{pmatrix}.\quad
\]
 Hence, $\XX_{s_1s_2s_1, B}$ is factorial, for $M_{s_1s_2s_1, B}$ being   invertible over $\ZZ$.  Moreover, $\XX_{s_1s_2s_1,B}$ is not Fano (but only semi-Fano), with   $c_1(\XX_{s_1s_2s_1,B})=2\omega_{1}$.

\vskip.2cm

\noindent {4.) $\XX_{s_2s_1s_2,B}$.} For $w={s_2s_1s_2}$, we order $I_{w,B}=\{2,1\}$. In this case,
 \[
\check R^+_{s_2s_1s_2,B}=\{\checkalpha_{2},\ 3\checkalpha_1+ 2\checkalpha_2\}\subset \check R^+(s_2s_1s_2)=\{\checkalpha_{2},\  \checkalpha_1+\checkalpha_2, \ 3\checkalpha_1+ 2\checkalpha_2\}.
\]
\[
M_{s_2 s_1 s_2,B}=\begin{pmatrix} 1 &0\\
2&3
\end{pmatrix},\quad
N_{s_2 s_1 s_2,B}=\begin{pmatrix} 1 &0\\
-{2\over 3}&{1\over 3}
\end{pmatrix}, \quad
N_{s_2 s_1 s_2, B}\begin{pmatrix}1\\1\end{pmatrix}=\begin{pmatrix}1\\-{1\over 3}\end{pmatrix}.\quad
\]
Hence,  $\XX_{s_2 s_1 s_2, B}$ is $\mathbb{Q}$-factorial but not  factorial, for $M_{s_2 s_1 s_2, B}$ being   invertible over $\QQ$ other than $\ZZ$.
Moreover, it is $\QQ$-Gorenstein Fano, with the first Chern class as an element of $H^2(\XX_{s_2 s_1 s_2,B},\QQ)$ given by $c_1(\XX_{s_2 s_1 s_2,B})=2\omega_2+\frac{2}3\omega_1$.
In particular,   $\XX_{s_2 s_1 s_2,B}$ is singular.

We note that    $\XX_{s_1s_2s_1, B}$  (resp. $\XX_{s_2s_1s_2, B}$)  was known to be    smooth (resp. singular) by    \cite[Theorem 2.4]{BilleyPostnikov}.
\end{example}

\begin{remark}
It was shown in \cite{Fan, Kar} that a general Schubert variety $X_{w, B}$ in $G/B$ is a (smooth) toric variety if and only if $w$ is a product of distinct simple reflections. A characterisation of the (weak) Fano property of toric Schubert varieties was provided in \cite{LMP}. In Remark 1.5 of loc. cit., we also see that the Schubert variety $\XX_{s_2s_1, B}$ of type $G_2$ is isomorphic to the
Hirzebruch surface $\mathbb{P}(\mathcal{O}\oplus \mathcal{O}(3))$.

In addition,  the Schubert variety $\XX_{s_1s_2s_1, B}$ of type $G_2$ is an example of lowest dimension  among those smooth non-Fano, non-toric Schubert varieties in a partial flag variety $G/P$. For Schubert varieties in the full flag variety $G/B$ of type $A$,  $X_{s_4s_2s_3s_2s_1, B}$ is a smooth non-Fano, non-toric Schubert variety $\XX_{w, B}$, by using similar calculations to the above examples together with      the criterion of the smoothness in \cite{LaSa}.

\end{remark}
\begin{example}[Schubert surfaces in $G_2$-Grassmannians]\label{ex:G2surfacesP} Let $G$ be of type $G_2$ and $P_k$ denote the maximal parabolic subgroup with $I^{P_k}=\{k\}$. Notice that the natural projection $G/B\to G/P_k$ is a $\mathbb{P}^1$-bundle, and the  restriction of the base $G/P_k$ to the Schubert surface    gives a $\mathbb{P}^1$-bundle $X_{s_2s_1s_2, B}\to X_{s_2s_1,P_{1}}$ (resp. $X_{s_1s_2 s_1,B}\to X_{s_1s_2,P_{2}}$) when $k=1$ (resp. $2$).

Notice that $G/P_2$ is isomorphic to the smooth quadratic hypersurface in $\mathbb{P}^6$. Hence, the Schubert surface   $X_{s_1s_2,P_{2}}$ in  $G/P_2$ is isomophic to $\mathbb{P}^2$, being smooth Fano with $c_1(\XX_{s_1 s_2,P_2})=3\omega_2$. Nevertheless, its preimage $X_{s_1s_2 s_1,B}$ is smooth but non-Fano by Example \ref{ex:G2surfacesB}.

It follows directly from the properties of $X_{s_2s_1s_2, B}$  in Example \ref{ex:G2surfacesB} that  the Schubert surface   $X_{s_2s_1,P_{1}}$ in  $G/P_1$
is singular, $\QQ$-Gorenstein and $\QQ$-factorial (since these are local properties and  $X_{s_2s_1s_2, B}\to X_{s_2s_1,P_{1}}$ is a $\mathbb{P}^1$-bundle). Indeed, we can also see these properties from   $R^+_{s_2s_1,P_1}=\{3\checkalpha_1+\checkalpha_2\}$, which leads to $M_{s_2s_1,P_1}=(3)$ and $N_{s_2s_1,P_1}=(\frac 1 3)$. As a consequence,  {  $c_1(\XX_{s_1 s_2,P_2})=\frac{5}3\omega_1$,} so that  $X_{s_2s_1,P_{1}}$ is $\QQ$-Gorenstein Fano.
\end{example}

\end{document}